
\documentclass[12pt]{amsart}
\textwidth=150mm
\textheight=210mm
\hoffset=-10mm

\usepackage[utf8]{inputenc}
\usepackage[T1]{fontenc}
\usepackage[british]{babel}

\usepackage{amsbsy}
\usepackage{amscd} 
\usepackage{amsfonts}
\usepackage{amsgen} 
\usepackage{amsmath}
\usepackage{amsopn} 
\usepackage{amssymb}
\usepackage{amstext}
\usepackage{amsthm} 
\usepackage{amsxtra}
\usepackage{bbold}
\usepackage{enumerate}
\usepackage{MnSymbol}
\usepackage{tikz}

\theoremstyle{plain} 

\newtheorem{theoremintro}{Theorem}

\newtheorem{theoremcounter}{theoremcounter}[section]

\newtheorem{corollary}[theoremcounter]{Corollary}
\newtheorem{lemma}[theoremcounter]{Lemma}

\newtheorem{theorem}[theoremcounter]{Theorem}

\theoremstyle{definition}
\newtheorem{definition}[theoremcounter]{Definition}
\newtheorem{example}[theoremcounter]{Example}
\newtheorem{remark}[theoremcounter]{Remark}

\numberwithin{equation}{section}

\newcommand{\lspan}{\ensuremath{\mathop{\mathrm{span}}}}
\newcommand{\Homm}{\ensuremath{\mathrm{Hom}}}
\newcommand{\ot}{\ensuremath{\otimes}}
\newcommand{\sSym}{\mathrm{sS}}
\newcommand{\Sym}{\mathrm{S}}
\newcommand{\ind}{\ensuremath{\mathop{\mathrm{ind}}}}
\newcommand{\ke}{\ensuremath{\mathop{\mathrm{ker}}}}
\newcommand{\NN}{\mathbb N}
\newcommand{\ZZ}{\mathbb Z}

\newcommand{\CC}{\mathbb C}
\newcommand{\cC}{\mathcal C}

\newcommand{\cR}{\mathcal R}

\newcommand{\idpart}{|}
\newcommand{\paarpart}{\sqcup}

\newcommand{\baarpartbaustein}{\rotatebox{180}{$\sqcap$}}
\newcommand{\baarpart}{
\mathrel{\vcenter{\offinterlineskip \hbox{$\baarpartbaustein$}}}}
\newcommand{\primarypart}{
\mathrel{\vcenter{\offinterlineskip
\hbox{$\baarpart$} \vskip -1.3ex \hbox{\hskip1.3ex$/$\hskip-1.2ex$-$} \vskip -1.2ex \hbox{\hskip2.2ex $\sqcap$}}}~}

\DeclareMathOperator{\Hom}{Hom}

\begin{document}
\title[Intertwiner spaces of non-easy group-theoretical quantum groups]{The intertwiner spaces of non-easy group-theoretical quantum groups}
\author[Laura Maaßen]{Laura Maaßen}
\address{Laura Maaßen, RWTH Aachen University, Lehrstuhl D für Mathematik, Pontdriesch 14-16, 52062 Aachen, Germany}
\email{laura.maassen@rwth-aachen.de}
\date{\today}
\subjclass[2010]{20G42, 46L52, 16T30, 05E10}
\keywords{Quantum groups, easy quantum groups, group-theoretical quantum groups, semi-direct product quantum groups, categories of partitions}

\begin{abstract}
In 2015, Raum and Weber gave a definition of group-theoretical quantum groups, a class of compact matrix quantum groups with a certain presentation as semi-direct product quantum groups, and studied the case of easy quantum groups.
In this article we determine the intertwiner spaces of non-easy group-theoretical quantum groups. We generalise group-theoretical categories of partitions and use a fiber functor to map partitions to linear maps which is  slightly different from the one for easy quantum groups. We show that this construction provides the intertwiner spaces of group-theoretical quantum groups in general. 
\end{abstract}

\maketitle

\section*{Introduction}
In 1987, Woronowicz \cite{Wo87} introduced \emph{compact matrix quantum groups} generalising the theory of compact Lie groups $G\subseteq \CC^{n\times n}$ to a non-commutative setting. Examples are provided by the quantum analogues $S_n^+$ and $O_n^+$ of the symmetric group $S_n\subseteq \CC^{n\times n}$ and the orthogonal group $O_n\subseteq \CC^{n\times n}$ defined by Wang in 1995 and 1998 \cite{Wa95,Wa98}. A compact matrix quantum group consists of a C*-algebra $A$ generated by the entries of a matrix $u=(u_{i,j})$, called fundamental corepresentation, and a *-homomorphism $\Delta:A\to A\ot A$, called comultiplication, and it satisfies certain dualised group properties (see Def.\ref{def::CMQG}). By a Tannaka-Krein type result of Woronowicz \cite{Wo88}, compact matrix quantum groups can be fully recovered from their intertwiner spaces, and hence any tensor category with duals (see Def.\ref{def::tensor_cat}) gives rise to a compact matrix quantum group. 

Based on this, in 2009 Banica and Speicher \cite{BS09} defined a class of compact matrix quantum groups $S_n \subseteq G \subseteq O_n^+$, called \emph{orthogonal easy quantum groups}, through a combinatorial structure of their intertwiner spaces. Their construction works as follows. They consider \emph{categories of partitions}, which are sets of set partitions closed under certain operations, and a fiber functor $p\mapsto T_p^{(n)}$ mapping partitions to linear maps. The linear span of the image of a category of partitions under this functor forms a tensor category with duals and hence gives rise to a compact matrix quantum group.

Related to the classification of all orthogonal easy quantum groups, in 2015 Raum and Weber \cite{RW15} introduced \emph{group-theoretical quantum groups} as orthogonal compact matrix quantum groups whose squared entries of the fundamental corepresentation $u_{ij}^2$ are central projections. In other words, they consider $G\subseteq H_n^{[\infty]}\subseteq H_n^+$ where $H_n^+$ is the free hyperoctahedral quantum group and $C(H_n^{[\infty]}) = C^*(\ZZ_2^{*n}) \Join C(S_n)$ (see \cite{Bi04}). Raum and Weber showed that any group-theoretical quantum group is isomorphic to a semi-direct product quantum group $C^*(\ZZ_2^{*n}/N) \Join C(\Sym_n)$, where $N\unlhd \ZZ_2^{*n}$ is an $\Sym_n$-invariant normal subgroup of $\ZZ_2^{*n}$. Moreover, they showed that a group-theoretical quantum group is easy if and only if $N$ is strongly $\Sym_n$-invariant and determined the corresponding categories of partitions. Hence the structure of the intertwiner spaces of group-theoretical easy quantum groups is known.

The aim of this article is to describe the intertwiner spaces of group-theoretical quantum groups in general, in particular those of non-easy group-theoretical quantum groups. For this purpose we define a generalisation of group-theoretical categories of partitions, called skew categories of partitions, which are still based on set partitions but closed under slightly different operations than categories of partitions. We define a fiber functor, denoted by $p\mapsto \widehat{T}_p^{(n)}$, to associate skew categories of partitions to tensor categories with duals. This functor is different from $p\mapsto T_p^{(n)}$. Then we can show that skew categories carry the ``group-theoretical'' structure we are looking for and can be characterised via their corresponding tensor categories.

\begin{theoremintro}[Theorem \ref{ThMain1}, Theorem \ref{ThMain2}]
Let $\cR\subseteq P$ be closed under rotation. Then the following are equivalent:
\begin{enumerate}
\item $\cR$ is a skew category of partitions.
\item We have $\cR= \{p\mid p \text{ is a rotation of  } \ke(\textbf{i}) \text{ for some } a_{\textbf{i}} \in F_{\infty}(\cR)\}$ and \linebreak $F_{\infty}(\cR):=\Sym_{\infty}(\{a_{\ind(p)} \mid k\in \NN_0, p\in \cR(k,0)\}) \unlhd \ZZ_2^{*\infty}$ is an ($\Sym_{\infty}$-invariant) normal subgroup of $\ZZ_2^{*\infty}$. Here $\ke(\textbf{i})$ denotes the partition with just upper points which fits exactly to the labelling $\textbf{i}$ and $a_{\ind(p)} \in \ZZ_2^{*\infty} =\langle a_1,a_2, \ldots \rangle$ is obtained by labelling the blocks of $p$ with the letters $a_1,a_2\ldots$ in ascending order.
\item $\lspan \{\widehat{T}_p^{(n)} \mid p\in \cR(k,l)\},k,l\in \NN_0$ is a tensor category with duals for all $n\in \NN$.
\end{enumerate}
\end{theoremintro}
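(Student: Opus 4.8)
The plan is to establish the full equivalence by proving the two pairwise equivalences $(1)\Leftrightarrow(2)$ and $(2)\Leftrightarrow(3)$ separately, matching the two referenced main theorems: the first is purely combinatorial/group-theoretic, the second analytic via the fiber functor. The conceptual engine throughout is the hypothesis that $\cR$ is \emph{closed under rotation}. This lets me reduce every partition $p\in\cR(k,l)$ to a rotated partner in $\cR(k+l,0)$, so that all of $\cR$ is already encoded by the single-row partitions $\bigcup_k\cR(k,0)$, and hence by the family of words $\{a_{\ind(p)}\}$ in $\ZZ_2^{*\infty}$. My first task is therefore to set up a precise dictionary between the three kinds of operation: the skew-category operations on partitions, the group operations (multiplication, inversion, conjugation) together with the $\Sym_\infty$-action on words, and the categorical operations (tensor product, composition, adjoint, and the duality morphisms) on the linear maps $\widehat{T}_p^{(n)}$.

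For $(1)\Leftrightarrow(2)$ I would first record the basic translation identities: horizontal concatenation of partitions corresponds to concatenation of words (after the appropriate $\Sym_\infty$-shift of labels), reflection and rotation correspond to inversion and, after recombining rows, to conjugation, and the base partitions defining a skew category map to the trivial word or to the generators $a_i$. With this dictionary the implication $(1)\Rightarrow(2)$ becomes a matter of checking that each group-closure property of $F_\infty(\cR)$, namely closure under products, inverses, conjugation, and the $\Sym_\infty$-action, is the image of one skew-category axiom. The reverse implication $(2)\Rightarrow(1)$ is where rotation-closure does the real work: a normal, $\Sym_\infty$-invariant subgroup structure on the words lifts back to closure of $\cR$ under the skew-category operations precisely because rotation identifies $\cR(k,l)$ with single-row partitions, so that no partition of $\cR$ is ``invisible'' to $F_\infty(\cR)$.

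For $(2)\Leftrightarrow(3)$ I would fix $n$ and analyze how $\widehat{T}^{(n)}$ interacts with the categorical operations. Under the dictionary, the tensor product of maps corresponds to concatenation of partitions and hence to multiplication of the associated words, while the adjoint corresponds to reflection and hence to inversion; these I expect to be comparatively routine, translating span-closure under $\otimes$ and $*$ into product- and inverse-closure of $\{a_{\ind(p)}\}$. The decisive point is composition. Here I would derive the composition formula for $\widehat{T}_p^{(n)}\circ\widehat{T}_q^{(n)}$ and show that, unlike the easy case where composing partitions merely accrues scalar powers of $n$, the modified functor makes the word of the composite appear explicitly as the product of the words of the factors. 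Closure of the span under composition then becomes equivalent to $\{a_{\ind(p)}\}$ being closed under multiplication, i.e.\ to $F_\infty(\cR)$ being a subgroup, while stability under the duality morphisms together with $\Sym_n$-equivariance of the whole span matches normality and $\Sym_\infty$-invariance.

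I expect the main obstacle to be this last composition analysis. The functor $\widehat{T}$ was introduced specifically so that group multiplication surfaces in the composition rule, and making this precise requires carefully tracking how the single-row words attached to $p$ and $q$ combine under vertical concatenation, and how the modification by the $\ZZ_2$-data removes the scalar factors of $n$ that obscure the structure in the easy case. A secondary difficulty is gaining enough control over the linear relations among the maps $\widehat{T}_p^{(n)}$: to conclude $(3)\Rightarrow(2)$ I must know that closure of the linear span genuinely forces the word-level group condition and not merely some weaker span-only identity, which means establishing a sufficiently sharp statement about when $\widehat{T}_p^{(n)}$ and $\widehat{T}_q^{(n)}$ coincide or become linearly dependent for large $n$.
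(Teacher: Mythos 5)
Your overall architecture does match the paper's: rotation-closure reduces everything to one-row partitions, a dictionary identifies the partition operations with word operations in $\ZZ_2^{*\infty}$ and with operations on the maps $\widehat{T}_p^{(n)}$, and a linear-independence statement (the paper's Lemma \ref{lem::lin_independent}: the maps $\widehat{T}_p^{(n)}$ with $p$ having at most $n$ blocks are linearly independent, while $\widehat{T}_p^{(n)}=0$ otherwise) is what lets one pull span-level closure back to partition-level closure. The only structural difference is that you pair the equivalences as $(1)\Leftrightarrow(2)$ and $(2)\Leftrightarrow(3)$, whereas the paper proves $(1)\Leftrightarrow(2)$ (Theorem \ref{ThMain1}) and $(1)\Leftrightarrow(3)$ (Theorem \ref{ThMain2}); that choice is immaterial.

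However, your dictionary is wrong at exactly the point where the ``skew'' structure earns its keep, and taken literally the affected steps fail. First, plain horizontal concatenation ``after the appropriate $\Sym_\infty$-shift of labels'' only encodes products of words in \emph{disjoint} letters; it cannot produce $a_1a_2\cdot a_2a_3=a_1a_3$. Closure of $F_{\infty}(\cR)$ under arbitrary products requires the \emph{connected} tensor product (which joins blocks of $p$ to blocks of $q$ carrying equal labels), and closure under the ensuing cancellations $a_j^2=1$ requires the conditioned composition; this is precisely how the paper organizes the proof of Theorem \ref{ThMain1}. Correspondingly, at the level of maps, $\widehat{T}_p^{(n)}\ot\widehat{T}_q^{(n)}$ is \emph{not} $\widehat{T}_{p\ot q}^{(n)}$: by Lemma \ref{lem::T_p_and_skew_opertions} it equals the sum $\sum_{r\in L}\widehat{T}_r^{(n)}$ over \emph{all} connected tensor products $r$ of $p$ and $q$, and only via linear independence can one extract each summand individually in the converse direction. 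Second, your expectation that the modified functor ``removes the scalar factors of $n$'' in composition is false: the paper's formula reads $\widehat{T}_q^{(n)}\circ\widehat{T}_p^{(n)}=\bigl(\prod_{c=a}^{b-1}(n-c)\bigr)\sum_{r\in M}\widehat{T}_r^{(n)}$, with $M$ the set of connected conditioned compositions; what is genuinely new compared to the easy case is that the composition \emph{vanishes} unless $p$ and $q$ are compatible (or when block counts exceed $n$), not that scalars disappear. The scalar itself is harmless for span arguments since it is nonzero whenever the relevant block counts are at most $n$, but a proof built on the single-term, scalar-free correspondences you anticipate would establish neither direction of $(2)\Leftrightarrow(3)$, nor subgroup closure of $F_{\infty}(\cR)$ under multiplication in $(1)\Rightarrow(2)$.
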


Applying the previous results and using the semi-direct product structure of group-theoretical quantum groups, allows us to prove that skew categories are the right analogues of categories of partitions for
most of the group-theoretical quantum groups.
\begin{theoremintro}[Theorem \ref{ThMain3}]
Let $S_n \subseteq G \subseteq O_n^+$ be a (not necessarily easy) compact matrix quantum group in its maximal version. Then $G$ is a group-theoretical quantum group $C^*(\ZZ_2^{*n}/N) \Join C(\Sym_n)$ with $\langle \langle \Sym_{\infty}(N) \rangle \rangle_{\ZZ_2^{*\infty}} \cap \ZZ_2^{*n} = N$ if and only if there exists a skew category of partitions $\cR$ such that 
$\Hom_G(k,l)=\lspan \{\widehat{T}_p^{(n)} \mid p\in \cR(k,l)\}$
for all $k,l\in \NN_0$. In this case, we have 
$\cR =\langle \ke(\textbf{i}) \mid a_{i_1}\cdot \ldots \cdot a_{i_k} \in F_n(\cR) \rangle_{skew}$
and 
$C(G) \cong C^*(\ZZ_2^{*n}/F_n(\cR)) \Join C(\Sym_n)$
with $F_n(\cR)=F_{\infty}(\cR)\cap \ZZ_2^{*n}$.
\end{theoremintro}
Note that, in particular, any skew category of partitions $\cR$ gives rise to a series of group-theoretical quantum groups $G_n$ with
\[ C(G_n) = C^*(\ZZ_2^{*n}/(F_\infty(\cR)\cap \ZZ_2^{*n})) \Join C(\Sym_n) .\]
Group-theoretical quantum groups that do not satisfy the technical assumption on the corresponding normal subgroup are treated in Remark \ref{rem::gen_case}. Hence, all compact matrix quantum groups $S_n \subseteq G\subseteq H_n^{[\infty]}\subseteq H_n^+$ (not necessarily easy quantum groups) are classified. Thus part of the open problem to classify all compact matrix quantum groups $S_n \subseteq G\subseteq H_n^+$, stated for example by Banica in \cite{Ba18}, is solved. 

In Section 1 we will recall some basic definitions for partitions and introduce skew categories of partitions. Section 2 provides some technical background and the proof of the first equivalence of Theorem 1. In Section 3 we introduce the functor $p\mapsto \widehat{T}_p^{(n)}$ and prove the other part of Theorem 1. We start Section 4 by recalling some basic definitions for compact matrix quantum groups and easy quantum groups and we summarise the results of Raum and Weber. Then we can finally prove Theorem 2 and have a look at some corollaries. Section 5 provides an example of a non-easy group-theoretical quantum group.

\section*{Acknowledgements}
The author is supported by an RWTH Scholarship for Doctoral Students and the Integrated Research Training Group (IRTG) of the SFB-TRR 195 ``Symbolic Tools in Mathematics and their Application''. This article contributes to the project ``I.13.-Computational classification of orthogonal quantum groups'' of the SFB-TRR 195. The author thanks her PhD advisors prof. Gerhard Hiss and prof. Moritz Weber for many more than helpful discussions and comments. The author also thanks the anonymous referee for the insightful comments, especially on Theorem \ref{ThMain1}. This article is part of the author's PhD thesis.

\section{Skew categories of partitions}
We recall the definition of categories of partitions and we introduce the new concept of skew categories of partitions.

\subsection{Partitions and operations}
At first we recall some basic definitions (see \cite{BS09,RW15}). 
\begin{definition} \label{def::partition}
Let $k,l\in \NN_0$. A \emph{partition} $p\in P(k,l)$ is a partition into disjoint, non-empty subsets of the set $\{ 1,\ldots ,k,1',\ldots ,l' \}$. These subsets are called the \emph{blocks of $p$} and we denote their number by $\text{bl}(p)$. Moreover, we put $P:= \bigcup_{k,l\in \NN_0} P(k,l)$ and
for a subset $D \subseteq P$ and $n\in \NN$ we define
\[ D_n := \{ p\in D \mid p \text{ has at most } n \text{ blocks}\} .\]
We can picture every partition $p\in P(k,l)$ as $k$ upper and $l$ lower points, where all points in the same block of $p$ are connected by a string. 
\begin{align*}
\begin{matrix}
1 & 2 &  & k \\
\bullet & \bullet & \ldots & \bullet \\
 & & p & & \\
\bullet & \bullet & \ldots & \bullet \\
1' & 2' &  & l' \\
\end{matrix}
\end{align*}
\end{definition}

As easy examples consider the following partitions: \\
\begin{minipage}[t]{.25\linewidth}
\vspace{0pt}
\begin{tikzpicture}
\coordinate [label=left:{$\idpart:=$}](O) at (-0.2,0);
\coordinate [label=right:{$\in P(1,1),$}](O2) at (0.2,0);
\coordinate (A1) at (0,-0.5);
\coordinate (A2) at (0,0.5);

\fill (A1) circle (2.5pt);
\fill (A2) circle (2.5pt);

\draw (A1) -- (A2);
\end{tikzpicture}
\end{minipage}
\begin{minipage}[t]{.3\linewidth}
\vspace{0pt}
\begin{tikzpicture}
\coordinate [label=left:{$\paarpart:= $}](O1) at (-0.2,0);
\coordinate [label=right:{$\in P(2,0),$}](O2) at (0.7,0);
\coordinate (A1) at (0,0.5);
\coordinate (A2) at (0.5,0.5);

\coordinate (C1) at (0,0);
\coordinate (C2) at (0.5,0);

\fill (A1) circle (2.5pt);
\fill (A2) circle (2.5pt);

\draw (A1) -- (C1) -- (C2) -- (A2);
\end{tikzpicture}
\end{minipage}
\begin{minipage}[t]{.45\linewidth}
\vspace{0pt}
\begin{tikzpicture}
\coordinate [label=left:{$\primarypart := $}](O) at (-0.2,0);
\coordinate [label=right:{$\in P(3,3).$}](O2) at (1.2,0);
\coordinate (A1) at (0,-0.5);
\coordinate (A2) at (0.5,-0.5);
\coordinate (A3) at (1,-0.5);
\coordinate (A4) at (0,0.5);
\coordinate (A5) at (0.5,0.5);
\coordinate (A6) at (1,0.5);

\coordinate (C1) at (0,0.2);
\coordinate (C2) at (0.5,0.2);
\coordinate (C3) at (0.5,-0.2);
\coordinate (C4) at (1,-0.2);
\coordinate (C5) at (0.4,-0.05);
\coordinate (C6) at (0.6,0.05);

\fill (A1) circle (2.5pt);
\fill (A2) circle (2.5pt);
\fill (A3) circle (2.5pt);
\fill (A4) circle (2.5pt);
\fill (A5) circle (2.5pt);
\fill (A6) circle (2.5pt);

\draw (A4) -- (C1) -- (C2) -- (A5);
\draw (A2) -- (C3) -- (C4) -- (A3);
\draw (A1) -- (C5);
\draw (C6) -- (A6);
\draw (C5) to [bend left] (C6);
\draw (C2) -- (C3);
\end{tikzpicture}
\end{minipage} 
\ \\
For easier calculation with partitions we will associate pairs of multi-indices to partitions.

\begin{definition}{\cite[§ 2.4.]{RW15}} \label{def::ker_ind}
Let $k,l\in \NN_0$. \\
For all $\textbf{i}=(i_1,\ldots ,i_k) \in \NN^k,\textbf{j}=(j_1,\ldots ,j_l) \in \NN^l$ we define $\ke(\textbf{i},\textbf{j}) \in P(k,l)$ as the partition obtained by the fibers of the map $\phi:\{1,\ldots,k,1'\ldots,l'\} \to \NN$ with $\phi(x)=i_x$ for all $x\in \underline{k}$ and $\phi(y')=j_y$ for all $y\in \underline{l}$. For $l=0$ and $\textbf{i} \in \NN^k$ we denote $\ke(\textbf{i}):=\ke(\textbf{i},\emptyset)$.\\
Vice versa for any partition $p\in P(k,l)$ we define $\ind(p):=(\textbf{i}^{(p)},\textbf{j}^{(p)})\in \NN^k \times \NN^l$ as the lexicographic minimal element in $\NN^{k+l}$ with $\ke(\ind(p))=p$.
\end{definition}
 
For example we have $\ind(p)=((1,1,2),(2,1,1))\in \NN^3 \times \NN^3$ and $p=\ke((3,3,7),(7,$ $3,3))$ for $p=\primarypart \in P(3,3)$. 

\begin{definition}[Symmetric group]
Let $n\in \NN$. We view the \emph{symmetric group} $\Sym_n$ as the group $\{\sigma:\underline{n} \to \underline{n} \mid \sigma \text{ bijective}\}$. We define the \emph{symmetric group (on countably, but infinitely many points)} $\Sym_{\infty}$ as 
\[ \Sym_{\infty}:= \{ \sigma: \NN \to \NN \mid \sigma \text{ bijective and } |\{n\in \NN \mid \sigma(n)\neq n\}|<\infty  \} .\] 
\end{definition}

Note that $\Sym_{\infty}$ acts componentwise on the multi-indices $\NN^{k+l}$ and we have $p=\ke(\textbf{i},\textbf{j})$ if and only if $(\textbf{i},\textbf{j}) \in \Sym_{\infty}(\ind(p))$.

\bigskip
Now, we recall the operations on partitions Banica and Speicher introduced to construct easy quantum groups.

\begin{definition}[Operations on partitions {\cite[Def.1.8.]{BS09}}] \label{def::cat_operations}
Let $p\in P(k,l)$ and $q\in P(k',l')$.
\begin{enumerate}[$\bullet$]
\item The \emph{involution} $p^* \in P(l,k)$ is obtained by turning $p$ upside-down.
\item The \emph{tensor product} $p\otimes q \in P(k+k',l+l')$ is the horizontal concatenation of the partitions $p$ and $q$.
\item Let $l=k'$. Then we can consider the vertical concatenation of the partitions $p$ and $q$. We may obtain middle points which are neither connected to upper nor to lower points. Connected components of such points are called \emph{loops} and we denote their number by $l(q,p)$. The \emph{composition} $qp \in P(k,l')$ of $p$ and $q$ is the vertical concatenation, where we remove all loops.
\item One \emph{basic rotation of $p$} is obtained by turning the uttermost left leg of the upper row and putting it in front of the first leg of the lower row or just in the lower row if $p$ has no lower points. Similarly, the other basic rotations are obtained by rotating the uttermost left lower leg to the upper row, the uttermost right upper leg to the lower row or the uttermost right lower leg to the upper row. Multiple basic rotations of $p$ are called \emph{rotations of $p$}.
\end{enumerate}
\end{definition}

See \cite{We17} for examples of these operations. To construct skew categories of partitions we introduce the following slightly modified operations.

\begin{definition}[Modified operations] 
Let $p\in P(k,l)$ and $q\in P(k',l')$.
\begin{enumerate}[$\bullet$]
\item For all $(\textbf{i},\textbf{j})\in \Sym_{\infty}(\ind(p))$ and $(\textbf{f},\textbf{g})\in \Sym_{\infty}(\ind(q))$ we define the \emph{connected tensor product} $p\ot_{(\textbf{if},\textbf{jg})} q:=ker(\textbf{if},\textbf{jg}) \in P(k+k',l+l')$. 
\item Let $l=k'$. We call $p$ and $q$ \emph{compatible} if $\textbf{j}^{(p)}\in \Sym_{\infty}(\textbf{i}^{(q)})$ where $\textbf{j}^{(p)}$ and $\textbf{i}^{(q)}$ are as in Def. \ref{def::ker_ind}. If $p$ and $q$ are compatible we define the \emph{conditioned composition of $p$ and $q$} as the usual composition $qp \in P(k,l')$. 
\item Let $l=k'$ and let $p$ and $q$ be compatible. For all $(\textbf{i},\textbf{j})\in \Sym_{\infty}(\ind(p))$ and $(\textbf{j},\textbf{g})\in \Sym_{\infty}(\ind(q))$ we define the \emph{connected conditioned composition} $q\cdot_{(\textbf{i},\textbf{j},\textbf{g})} p:=\ke(\textbf{i},\textbf{g}) \in P(k,l')$.
\end{enumerate}
\end{definition}

Note that the connected tensor product $p\ot_{(\textbf{if},\textbf{jg})} q$ is equal to the usual tensor product $p\ot q$ if the multi-indices $(\textbf{i},\textbf{j}) \in \NN^{k+l}$ and $(\textbf{f},\textbf{g}) \in \NN^{k'+l'}$ have pairwise different entries. In general all connected tensor products can be obtained by taking the usual tensor product and applying several joinings of one block of $p$ with one block of $q$ where it is not allowed to connect different blocks of $p$ or different blocks of $q$. Consider the following example: 
\[ \ke((1,1,2),(2,2)) \ot_{(1,1,2,1,1),(2,2,1)} \ke((1,1),(1)) \]
\begin{minipage}[t]{.35\linewidth}
\begin{tikzpicture}
\coordinate [label=left:{$=$}](O) at (-0.5,1);
\coordinate (A1) at (0,2);
\coordinate (A2) at (0.5,2);
\coordinate (A3) at (1,2);
\coordinate (B1) at (0,0);
\coordinate (B2) at (0.5,0);
\coordinate [label=right:{$\ot_{(1,1,2,1,1),(2,2,1)}$}](O) at (1.3,1);

\coordinate (C1) at (0,1.5);
\coordinate (C2) at (0,0.5);
\coordinate (C3) at (0.5,1.5);
\coordinate (C4) at (0.5,0.5);
\coordinate (C5) at (1,0.5);

\fill (A1) circle (3pt);
\fill (A2) circle (3pt);
\fill (A3) circle (3pt);
\fill (B1) circle (3pt);
\fill (B2) circle (3pt);

\draw (A1) -- (C1) -- (C3) -- (A2);
\draw (B1) -- (C2) -- (C4) -- (C5) -- (A3);
\draw (B2) -- (C4);
\end{tikzpicture}
\end{minipage}
\begin{minipage}[t]{.17\linewidth}
\hspace{1.1cm}
\begin{tikzpicture}
\coordinate (A1) at (0,2);
\coordinate (A2) at (0.5,2);
\coordinate (B1) at (0,0);

\coordinate (C1) at (0,1);
\coordinate (C2) at (0.5,1);

\fill (A1) circle (3pt);
\fill (A2) circle (3pt);
\fill (B1) circle (3pt);

\draw (A1) -- (C1) -- (C2) -- (A2);
\draw (C1) -- (B1);
\end{tikzpicture}
\end{minipage}
\begin{minipage}[t]{.5\linewidth}
\hspace{0.6cm}
\begin{tikzpicture}
\coordinate [label=left:{$=$}](O) at (-0.5,1);
\coordinate (A1) at (0,2);
\coordinate (A2) at (0.5,2);
\coordinate (A3) at (1,2);
\coordinate (A4) at (1.5,2);
\coordinate (A5) at (2,2);
\coordinate (B1) at (0,0);
\coordinate (B2) at (0.5,0);
\coordinate (B3) at (1,0);
\coordinate [label=right:{.}](O) at (2.25,1);

\coordinate (C1) at (0,1.5);
\coordinate (C2) at (0,1);
\coordinate (C3) at (0.5,1.5);
\coordinate (C4) at (0.5,1);
\coordinate (C5) at (1,1);
\coordinate (C6) at (1,0.5);
\coordinate (C7) at (1.5,0.5);
\coordinate (C8) at (2,0.5);
\coordinate (C9) at (0.8,1.5);
\coordinate (C10) at (1.2,1.5);
\coordinate (C11) at (1.5,1.5);

\fill (A1) circle (3pt);
\fill (A2) circle (3pt);
\fill (A3) circle (3pt);
\fill (A4) circle (3pt);
\fill (A5) circle (3pt);
\fill (B1) circle (3pt);
\fill (B2) circle (3pt);
\fill (B3) circle (3pt);

\draw (A1) -- (C1) -- (C3) -- (A2);
\draw (B1) -- (C2) -- (C4) -- (C5) -- (A3);
\draw (B2) -- (C4);
\draw (B3) -- (C6) -- (C7) -- (C8) -- (A5);
\draw (A4) -- (C7);
\draw (C3) -- (C9) to [bend left] (C10) -- (C11);
\end{tikzpicture}
\end{minipage}

Thus the connected tensor product allows more operations than the usual tensor product whereas the conditioned composition restricts the usual composition to compatible partitions, i.e. to those partitions whose upper resp. lower row coincide with respect to the block structure. In the following example the partitions $p_1$ and $p_2$ are compatible but the partitions $p_1$ and $p_3$ are not compatible. \\

\begin{minipage}[t]{.5\linewidth}
\begin{tikzpicture}
\coordinate [label=left:{$p_1=$}](O) at (-0.5,1);
\coordinate [label=left:{$p_2=$}](O) at (-0.5,-1.5);
\coordinate (A1) at (0,2);
\coordinate (A2) at (0.5,2);
\coordinate (A3) at (1,2);
\coordinate (B1) at (0,0);
\coordinate (B2) at (0.5,0);
\coordinate (D1) at (0,-0.5);
\coordinate (D2) at (0.5,-0.5);
\coordinate (E1) at (0,-2.5);

\coordinate (C1) at (0,1.5);
\coordinate (C2) at (0,0.5);
\coordinate (C3) at (0.5,1.5);
\coordinate (C4) at (0.5,0.5);
\coordinate (C5) at (1,0.5);
\coordinate (C6) at (0,-1.5);
\coordinate (C7) at (0.5,-1.5);

\fill (A1) circle (3pt);
\fill (A2) circle (3pt);
\fill (A3) circle (3pt);
\fill (B1) circle (3pt);
\fill (B2) circle (3pt);
\fill (D1) circle (3pt);
\fill (D2) circle (3pt);
\fill (E1) circle (3pt);

\draw (A1) -- (C1) -- (C3) -- (A2);
\draw (B1) -- (C2) -- (C4) -- (C5) -- (A3);
\draw (B2) -- (C4);
\draw (D1) -- (E1); 
\draw (C6) -- (C7) -- (D2);
\end{tikzpicture}
\end{minipage}
\begin{minipage}[t]{.5\linewidth}
\begin{tikzpicture}
\coordinate [label=left:{$p_1=$}](O) at (-0.5,1);
\coordinate [label=left:{$p_3=$}](O) at (-0.5,-1.5);
\coordinate (A1) at (0,2);
\coordinate (A2) at (0.5,2);
\coordinate (A3) at (1,2);
\coordinate (B1) at (0,0);
\coordinate (B2) at (0.5,0);
\coordinate (D1) at (0,-0.5);
\coordinate (D2) at (0.5,-0.5);
\coordinate (E1) at (0,-2.5);

\coordinate (C1) at (0,1.5);
\coordinate (C2) at (0,0.5);
\coordinate (C3) at (0.5,1.5);
\coordinate (C4) at (0.5,0.5);
\coordinate (C5) at (1,0.5);
\coordinate (C6) at (0,-1.5);
\coordinate (C7) at (0.5,-1.5);
\coordinate (C8) at (0,-1);

\fill (A1) circle (3pt);
\fill (A2) circle (3pt);
\fill (A3) circle (3pt);
\fill (B1) circle (3pt);
\fill (B2) circle (3pt);
\fill (D1) circle (3pt);
\fill (D2) circle (3pt);
\fill (E1) circle (3pt);

\draw (A1) -- (C1) -- (C3) -- (A2);
\draw (B1) -- (C2) -- (C4) -- (C5) -- (A3);
\draw (B2) -- (C4);
\draw (D1) -- (C8); 
\draw (E1) -- (C6) -- (C7) -- (D2);
\end{tikzpicture}
\end{minipage}

Moreover, note that the connected conditioned composition $q\cdot_{(\textbf{i},\textbf{j},\textbf{g})} p$ is equal to the conditioned composition $qp$ if the entries of the multi-indices $\textbf{i} \in \NN^{k}$ and $\textbf{g} \in \NN^{l'}$ which do not appear in $\textbf{j} \in \NN^l$ are pairwise different. In general all connected conditioned compositions can be obtained by taking the conditioned composition and applying afterwards several joinings of one upper block with one lower block. Here it is not allowed to connect different upper blocks or different lower blocks.
We will later see that the connected conditioned composition can be obtained by several conditioned compositions and connected tensor products.

\subsection{(Skew) categories of partitions}

Based on the operations in Def. \ref{def::cat_operations} Banica and Speicher defined categories of partitions. Moreover, Raum and Weber \cite{RW15} defined group-theoretical categories of partitions which correspond to group-theoretical easy quantum groups as we shall see later.

\begin{definition}[Category of partitions {\cite[Def.6.3.]{BS09}}]
A \emph{category of partitions} is a subset $\cC \subseteq P$ containing the partitions $\sqcup$ and $|$ which is closed under involution, taking tensor products and composition. By $\langle E \rangle$ we denote the closure of $E \cup \{ \sqcap, | \}$ under involution, taking tensor products and composition for any set $E\subseteq P$. \\
A category of partitions is called \emph{group-theoretical} if it contains the partition $\primarypart$.
\end{definition}

Note that any category of partitions is closed under rotation as for example the basic rotation from the left of the upper to the lower line can be obtained by taking the tensor product with the identity partition $|$ and then compose with $\sqcap \ot | \ot \cdots \ot |$, see \cite{BS09}. Now, we consider the modified operations on partitions to give a definition of a skew category of partitions.

\begin{definition}[Skew category of partitions]
A \emph{skew category of partitions} is a subset $\cC \subseteq P$ containing the partitions $\sqcup$ and $|$ which is closed under involution, taking connected tensor products and conditioned composition. By $\langle E \rangle_{skew}$ we denote the closure of $E \cup \{ \sqcap, | \}$ under involution, taking connected tensor products and conditioned composition for any set $E\subseteq P$. \\
\end{definition}

\newpage
The following lemma shows that skew categories of partitions generalise group-theoretical categories of partitions. Recall Definition \ref{def::ker_ind}.

\begin{lemma} \label{lem::skew_cat}
\begin{enumerate}[(i)]
\item Skew categories of partitions are closed under rotation.
\item Skew categories of partitions are closed under connected conditioned composition.
\item Any skew category of partitions contains the partition $\primarypart$. 
\item Any group-theoretical category of partitions is a skew category of partitions.
\end{enumerate}
\end{lemma}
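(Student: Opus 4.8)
The plan is to prove the four statements in the order (iii), (ii), (i), (iv), since rotation is the only genuinely new phenomenon and should be isolated last. I would use two elementary remarks constantly: a conditioned composition is just an ordinary composition of a \emph{compatible} pair, and the connected tensor product specialises to the ordinary tensor product whenever the chosen representatives have disjoint entries. Hence every skew category contains $\sqcup$, $|$ and $\sqcup^{*}$, and, by taking connected tensor products that join one block of a factor with one block of the other (I abbreviate this as $\ot_{\mathrm{join}}$), it contains all ``fat identities'' $e_v=\ke((v,\dots,v),(v,\dots,v))$ (obtained from $|$ and $|\ot_{\mathrm{join}}|$) and all ``fat cups'' $\ke((v,\dots,v),(v,\dots,v,w,w))=e_v\ot_{\mathrm{join}}\sqcup$, as well as $W:=\ke((1,1),(1,1))=\sqcup^{*}\ot_{\mathrm{join}}\sqcup$. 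Part (iii) is then immediate: starting from $\sqcup^{*}\ot|=\ke((1,1,2),(2))$ and forming the connected tensor product with $\sqcup$ that joins the cup to the value-$1$ block yields $\ke((1,1,2),(2,1,1))=\primarypart$, using neither rotation nor composition.

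For (ii), write $p=\ke(\textbf{i},\textbf{j})$ and $q=\ke(\textbf{j},\textbf{g})$ for compatible representatives. Comparing $\ke(\textbf{i},\textbf{g})$ with the conditioned composition $qp$ shows that they agree except that, for every value $v$ occurring in both $\textbf{i}$ and $\textbf{g}$ but not in $\textbf{j}$, the upper and lower $v$-blocks must be additionally joined. Such an upper-to-lower joining inside one partition is not directly available, but it can be routed through the middle: I replace $p$ by the connected tensor product attaching a $v$-cup $\sqcup$ to its upper $v$-block and $q$ by the one attaching a $v$-cap $\sqcup^{*}$ to its lower $v$-block, simultaneously for all such $v$. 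The auxiliary points now lie in the seam, the modified partitions are still compatible, and their conditioned composition connects the former $v$-blocks through these seam points; since those points are internal, no spurious upper or lower points survive and the result is exactly $\ke(\textbf{i},\textbf{g})$.

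Part (i) is the main obstacle. The classical Banica--Speicher realisation of the leftmost rotation (tensor $p$ with a cup, then compose with a cap-decorated identity) is \emph{not} a conditioned composition: the cup identifies two seam points lying in different blocks on the $p$-side, so compatibility fails as soon as $p$ has a repeated upper value. The remedy is to make the seam compatible before composing. With $\textbf{i}=(i_1,\dots,i_k)$, I first form $r:=|\ot_{\mathrm{join}}p=\ke((i_1,\textbf{i}),(i_1,\textbf{j}))$, gluing a fresh through-string into the block of the leg $u_1$ to be rotated; then I build the gadget $G:=\ke((i_2,\dots,i_k),(i_1,\textbf{i}))$ as a connected tensor product of the fat identities $e_v$ (for $v\neq i_1$) and one fat cup (for $v=i_1$), so that $G$ lies in the skew category by the first paragraph. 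By construction the lower tuple of $G$ equals the upper tuple of $r$, so placing $G$ on top of $r$ is a \emph{conditioned} composition, and a check shows it produces $\ke((i_2,\dots,i_k),(i_1,\textbf{j}))$, the leg $u_1$ rotated to the lower-left. The remaining basic rotations follow by the mirrored gadget or from this one via closure under involution.

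For (iv), let $\cC$ be a group-theoretical category, i.e.\ an ordinary category of partitions containing $\primarypart$; I must show closure under the two skew operations. Conditioned composition is a restriction of composition, so that closure is automatic, and the real content is closure under connected tensor products, equivalently joining one block of $p$ with one block of $q$ inside $p\ot q$ using only ordinary operations and $\primarypart$. First I would derive $W\in\cC$ from $\primarypart$: a rotation of $\primarypart$ brings its two-point block onto a single row, and composing with a partition assembled from $\sqcup,\sqcup^{*},|$ (one with no four-point block, hence harmlessly in $\cC$) closes that block into a removable loop while leaving the four-point block as $W$. Given $W$, a block join is realised by rotating the two legs to be joined into adjacent lower points and composing there with $|^{\ot a}\ot W\ot|^{\ot b}$, which connects the blocks and preserves all point counts, thus reproducing the connected tensor product. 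The delicate point here is avoiding circularity, namely that the merging gadget must be built without already assuming block joins, which is precisely why $W$ is extracted through the loop-removal argument rather than by merging.
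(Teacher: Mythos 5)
Parts (i)--(iii) of your proposal are essentially correct. Your (iii) is the paper's own one-line identity, expressing $\primarypart$ as a connected tensor product of the cap-tensor-identity with a cup. Your (i) follows the paper's proof exactly: glue a through-string into the block of the leg being rotated, then conditioned-compose with the gadget $G=\ke((i_2,\ldots,i_k),(i_1,\textbf{i}))$. One slip in (i): $G$ is in general \emph{not} a connected tensor product of fat identities and one fat cup, because connected tensor products concatenate their factors horizontally, so each factor's legs come out contiguous, whereas $G$ needs the values interleaved in the pattern of $\textbf{i}$ (try $\textbf{i}=(1,2,1)$). This is harmless and fixable by the paper's own device: build $G$ strand by strand, i.e.\ as $\sqcup$ connected-tensored with one copy of $|$ at a time, each new strand joined into the appropriate existing block. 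Your (ii) is correct and genuinely different from the paper: the paper rotates $p$ and $q$ to one line (so its (ii) depends on (i)), forms a connected tensor product identifying the two copies of $\textbf{j}$, and then caps off the doubled middle; you instead attach, for each value $v$ occurring in $\textbf{i}$ and $\textbf{g}$ but not in $\textbf{j}$, a cup to $p$'s lower row and a cap to $q$'s upper row (joined to the respective $v$-blocks) and perform a single conditioned composition. That argument is rotation-free and self-contained, a nice simplification.

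Part (iv), however, contains a genuine gap. After extracting $W=\ke((1,1),(1,1))$ from $\primarypart$ (that step is fine), you claim every block join is realised by rotating the two relevant legs until adjacent and composing with $|^{\ot a}\ot W\ot|^{\ot b}$. First, rotation cannot always make the legs adjacent: rotations only shift the cyclic order of the legs, so in $p\ot q$ with $p=q=\ke((1,2,1))$ the two singleton value-$2$ blocks are separated by other legs on both sides of the cycle and can never become neighbours. Second, and decisively, no argument that uses only $W$ (together with the ordinary category operations) can prove closure under joining blocks: the category $\langle\vierpart\rangle$ of $H_n^+$ contains $W$ (a rotation of $\vierpart$) and is closed under involution, tensor products, composition and rotation, yet it consists precisely of the noncrossing partitions with even blocks, while the connected tensor product of $\sqcap\ot\sqcap$ with itself joining first block to first block and second block to second block is $\ke((1,1,2,2,1,1,2,2))$, a crossing partition. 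Hence everything your procedure can reach from $\sqcap\ot\sqcap\ot\sqcap\ot\sqcap$ stays inside $\langle\vierpart\rangle$, while the partition that closure under connected tensor products demands lies outside it. The closure of group-theoretical categories under joining blocks genuinely requires $\primarypart$ itself (it is exactly what allows a doubled block to pass through another block); this is the known fact from \cite{RW15} that the paper's one-line proof of (iv) invokes, and your argument discards $\primarypart$ too early -- once only $W$ is retained, the statement being proved is simply false.
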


\begin{proof}
\begin{enumerate}[(i)]
\item We mimic the proof of \cite[Lemma 1.1]{TW18}. However, we need to take the special conditions of the conditioned composition into account. Let $\cR$ be a skew category of partitions and $p\in \cR(k,l)$. We define $(\textbf{f},\textbf{g}):=((1,i_1^{(p)},\ldots,i_k^{(p)}),(1,j_1^{(p)},\ldots,j_l^{(p)}))$. Then $q:= |\ot_{(\textbf{f},\textbf{g})} p \in \cR(k+1,l+1)$ is the connected tensor product connecting the partition $|$ to the uttermost left upper point of $p$ as $i_1^{(p)}=1$ by Def. \ref{def::ker_ind}. We consider also the partition $s:=\ke((i_2^{(p)},\ldots,i_k^{(p)}),(1,1,i_2^{(p)},\ldots,i_k^{(p)})) \in \cR(k-1,k+1)$ which can be obtained by several connected tensor products of $\sqcap$ and $|$. By construction, $s$ and $q$ are compatible and $qs \in \cR(k-1,l+1)$ is the basic rotation of $p$ where we rotate the uttermost left upper point to the lower row. The other basic rotations can be constructed analogously. 
\item Let $\cR$ be a skew category of partitions and let $p\in \cR(k,l)$ and $q\in \cR(l,l')$ be compatible partitions. Let $(\textbf{i},\textbf{j})\in \Sym_{\infty}(\ind(p))$ and $(\textbf{j},\textbf{g})\in \Sym_{\infty}(\ind(q))$ and put $\textbf{j}':=(j_l,\ldots,j_1) \in \NN^l$ and $\textbf{g}':=(g_{l'},\ldots,g_1) \in \NN^{l'}$. We consider the rotation $p':=\ke (\textbf{i}\textbf{j}') \in \cR(k+l,0)$ of $p$ to the upper line and the rotation $q':=\ke (\textbf{j}\textbf{g}') \in \cR(l+l',0)$ of $q$ to the upper line and put $r:=p' \ot_{(\textbf{i}\textbf{j}'\textbf{j}\textbf{g}',\emptyset)} q =\ke (\textbf{i}\textbf{j}'\textbf{j}\textbf{g}') \in \cR(k+2l+l')$. By several conditioned compositions of connected tensor products of $|$ and $\sqcap$ with $r$ we obtain $\ke (\textbf{i}\textbf{g}') \in \cR(k+l')$ and hence the connected conditioned composition $\ke (\textbf{i},\textbf{g}) \in \cR(k,l')$ lies in $\cR$.
\item We have 
\[\primarypart = (\sqcup \ot |) \ot_{(1,1,2),(2,1,1)} \sqcap.\] 
\item It is easy to see that group-theoretical categories of partitions are closed under joining blocks since they contain the partitions $\ke((1,1,2,3,\ldots,n),(2,3,\ldots,n,\linebreak 1,1))$ by composition and hence by rotation the
partitions $\ke((1,2,3,\ldots,n,1),\linebreak (1,2,3,\ldots,n,1))$. This implies that group-theoretical categories of partitions are closed under taking connected tensor products.
\end{enumerate}
\end{proof}

\newpage
\section{The group-theoretical structure of skew categories of partitions}
In this section, we will analyse the ``group-theoretical'' structure of skew categories of partitions. Following the idea of Raum and Weber, we associate words in the free group product $\ZZ_2^{*\infty}$ to partitions.

\begin{definition}{\cite[§ 4.1.]{RW15}} \label{def::Finfty}
Let $\ZZ_2^{*\infty}=\langle a_i \mid i\in \NN \rangle$ be the infinite free product of the cyclic group $\ZZ_2$. For any $\textbf{i}=(i_1,\ldots,i_k) \in \NN^k$ we denote by $a_{\textbf{i}}:=a_{i_1}\cdot \ldots \cdot a_{i_k} \in \ZZ_2^{*\infty}$ the corresponding word in $\ZZ_2^{*\infty}$ and we view any $\sigma \in \Sym_{\infty}$ as an endomorphism $\sigma\in \text{End}(\ZZ_2^{*\infty})$ via $\sigma(a_{\textbf{i}}) = a_{\sigma(\textbf{i})}$. \\
For any subset $D\subseteq P$ which is closed under rotation we define 
\[ F_{\infty}(D):=\Sym_{\infty}(\{ a_{\ind(p)} \mid k\in \NN_0, p\in D(k,0)\}) \subseteq \ZZ_2^{*\infty}.\]
\end{definition}

\begin{remark} \label{rem::Finfty}
Let $N\subseteq \ZZ_2^{*\infty}$ be a subset which is closed under conjugation and we set
\[ D:= \{p\mid p \text{ is a rotation of  } \ke(\textbf{i}) \text{ for some } a_{\textbf{i}} \in N\}.\]
Then $F_{\infty}(D) = N$ by Definition \ref{def::ker_ind}.
\end{remark}

\begin{theorem}\label{ThMain1}
Let $\cR\subseteq P$ be closed under rotation. Then $\cR$ is a skew category of partitions if and only if $F_{\infty}(\cR) \unlhd \ZZ_2^{*\infty}$ is an ($\Sym_{\infty}$-invariant) normal subgroup of $\ZZ_2^{*\infty}$ and
\[ \cR= \{p\mid p \text{ is a rotation of  } \ke(\textbf{i}) \text{ for some } a_{\textbf{i}} \in F_{\infty}(\cR)\}.\]
In particular, for any $\Sym_{\infty}$-invariant normal subgroup $N\unlhd \ZZ_2^{*\infty}$ the set 
\[ \{p\mid p \text{ is a rotation of  } \ke(\textbf{i}) \text{ for some } a_{\textbf{i}} \in N\}\]
is a skew category of partitions.
\end{theorem}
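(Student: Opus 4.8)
The plan is to establish the equivalence by translating the defining operations of a skew category of partitions directly into group operations in $\ZZ_2^{*\infty}$, using the map $\cR \mapsto F_{\infty}(\cR)$ as the dictionary. Since $\cR$ is closed under rotation, the remark following Definition \ref{def::Finfty} tells us that $\cR$ is completely recovered from the set $F_{\infty}(\cR)$ via rotations of kernels $\ke(\textbf{i})$, so it suffices to match ``closure under the skew operations'' with ``being a normal $\Sym_{\infty}$-invariant subgroup.'' Note that $F_{\infty}(\cR)$ is automatically $\Sym_{\infty}$-invariant by construction (it is defined as a $\Sym_{\infty}$-orbit), so the genuine content is the subgroup condition: closure under the group multiplication and under inverses.

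For the forward direction, I would assume $\cR$ is a skew category of partitions and verify the two subgroup axioms. First I would check that $F_{\infty}(\cR)$ contains the neutral element and is closed under inverses: the empty word comes from the empty partition (or from $|$ rotated appropriately), and the inverse of $a_{\textbf{i}} = a_{i_1}\cdots a_{i_k}$ is $a_{i_k}\cdots a_{i_1}$ in $\ZZ_2^{*\infty}$ since each generator is an involution, and this reversed word corresponds precisely to the partition $p^*$ obtained by applying the involution (rotated to the upper line), which lies in $\cR$ by closure under involution and rotation. The crux is closure under multiplication: given $a_{\textbf{i}}, a_{\textbf{f}} \in F_{\infty}(\cR)$ coming from $p = \ke(\textbf{i})$ and $q = \ke(\textbf{f})$ in $\cR(k,0)$ and $\cR(k',0)$, I want $a_{\textbf{i}}a_{\textbf{f}} = a_{\textbf{if}} \in F_{\infty}(\cR)$. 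The natural candidate is the connected tensor product $p \ot_{(\textbf{if},\emptyset)} q = \ke(\textbf{if},\emptyset)$, which lies in $\cR$ by the skew-category axioms; one then reads off its index to confirm $a_{\textbf{if}}$ is in $F_{\infty}(\cR)$. The subtlety is that reductions in the free group (cancellation of adjacent equal generators, i.e.\ $a_s a_s = e$) must be accounted for, since the word $a_{\textbf{if}}$ may not be reduced; here I expect to use the $\Sym_{\infty}$-invariance together with the fact that joining two blocks (which identifies two indices) is exactly what cancellation of matching generators corresponds to at the partition level, and this is permitted by the connected tensor product.

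For the reverse direction, I would assume $N = F_{\infty}(\cR)$ is a normal $\Sym_{\infty}$-invariant subgroup and show $\cR$ is closed under involution, connected tensor products, and conditioned composition. Closure under involution again follows from closure under inverses in $N$. For the connected tensor product $p \ot_{(\textbf{if},\textbf{jg})} q$, I would first rotate $p$ and $q$ entirely to the upper line (allowed since $N$ determines a rotation-closed set), giving words $a_{\textbf{i}\textbf{j}'}, a_{\textbf{f}\textbf{g}'} \in N$; their product lies in $N$ by the subgroup property, and rotating back yields the desired connected tensor product in $\cR$. For conditioned composition $qp$ of compatible $p, q$, I would similarly rotate to the upper line and use the product structure, where compatibility ($\textbf{j}^{(p)} \in \Sym_{\infty}(\textbf{i}^{(q)})$) guarantees that the middle indices match up so that the corresponding middle generators cancel pairwise in the free group, realising the removal of the composed strand as group cancellation. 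The final assertion of the theorem, that any $\Sym_{\infty}$-invariant normal subgroup $N$ yields a skew category via rotations of $\ke(\textbf{i})$ for $a_{\textbf{i}} \in N$, is then immediate: this set is rotation-closed by definition, and since it equals $F_{\infty}$ of itself (one must verify $F_{\infty}(\{p : p \text{ rotation of } \ke(\textbf{i}), a_{\textbf{i}} \in N\}) = N$, which follows from the remark after Definition \ref{def::Finfty}), the already-proven equivalence applies.

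I expect the main obstacle to be the careful bookkeeping of free-group reductions versus block-joinings, i.e.\ making precise that a joining of two blocks in the connected tensor product corresponds exactly to a cancellation $a_s a_s = e$ (or an identification of generators) in $\ZZ_2^{*\infty}$, and conversely that every reduction in the group can be realised by a legal partition operation. The reason this is delicate is that the words $a_{\ind(p)}$ are built from the \emph{lexicographically minimal} representatives, so multiplication and cancellation in the free group can produce words whose reduced form corresponds to a partition only after a nontrivial $\Sym_{\infty}$-relabelling; keeping the index notation consistent through rotations and compositions, and invoking $\Sym_{\infty}$-invariance at precisely the right moments, is where the real care lies. The actual partition-diagram manipulations, once the index dictionary is fixed, are routine and parallel to the constructions already carried out in the proof of Lemma \ref{lem::skew_cat}.
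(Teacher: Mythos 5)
Your plan has a genuine gap in the forward direction, and it is exactly the point you dismiss at the outset. You write that since $\Sym_{\infty}$-invariance of $F_{\infty}(\cR)$ is automatic, ``the genuine content is the subgroup condition: closure under the group multiplication and under inverses.'' But the theorem asserts that $F_{\infty}(\cR)$ is a \emph{normal} subgroup of $\ZZ_2^{*\infty}$, and normality is not a consequence of being a $\Sym_{\infty}$-invariant subgroup: invariance only gives closure under relabelling of generators, not under conjugation. (For instance, in $\ZZ_2^{*3}$ the subgroup generated by the $\Sym_3$-orbit of $a_1a_2a_1a_2$ is invariant but not normal: it is a finitely generated, infinite-index subgroup of the free kernel of $\ZZ_2^{*3}\to\ZZ_2^{3}$, hence cannot be normal in it.) The paper therefore devotes a separate step to conjugation: for $a_{\textbf{i}}\in F_{\infty}(\cR)$ coming from $p$ and any $j\in\NN$, it forms the connected tensor product $\sqcup \ot_{((j,j,i_1,\ldots,i_k),\emptyset)}\, p$ and rotates the leftmost point to the right, obtaining $\ke((j,i_1,\ldots,i_k,j))\in\cR$ and hence $a_j a_{\textbf{i}} a_j^{-1}=a_j a_{\textbf{i}} a_j \in F_{\infty}(\cR)$; conjugation by arbitrary words follows by iteration. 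Your two subgroup axioms, however carefully executed, prove strictly less than the statement.

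The omission also undermines your reverse direction. For the connected tensor product you propose to rotate $p$ and $q$ to the upper row, multiply in $N$, and rotate back; but the product you actually get is $(a_{\textbf{i}}a_{\textbf{j}}^{-1})(a_{\textbf{f}}a_{\textbf{g}}^{-1})$, whereas the rotation of $p\ot_{(\textbf{if},\textbf{jg})} q$ to the upper row is $a_{\textbf{i}}a_{\textbf{f}}a_{\textbf{g}}^{-1}a_{\textbf{j}}^{-1}$. These differ by a conjugation, and the paper's identity
\[ a_{\textbf{i}}a_{\textbf{f}}a_{\textbf{g}}^{-1} a_{\textbf{j}}^{-1}=a_{\textbf{j}}\bigl((a_{\textbf{j}}^{-1}(a_{\textbf{i}}a_{\textbf{j}}^{-1})a_{\textbf{j}})(a_{\textbf{f}}a_{\textbf{g}}^{-1})\bigr) a_{\textbf{j}}^{-1} \in N\]
uses normality of $N$ in an essential way --- precisely the property your outline treats as automatic bookkeeping. (Your treatment of the conditioned composition, in effect $a_{\textbf{i}}a_{\textbf{g}}^{-1}=(a_{\textbf{i}}a_{\textbf{j}}^{-1})(a_{\textbf{j}}a_{\textbf{g}}^{-1})$, is correct and matches the paper.) A further, smaller inaccuracy: you attribute the handling of word reductions $a_sa_s=e$ to block-joinings inside the connected tensor product, but joining blocks only identifies letters and can never delete them; the paper removes an adjacent equal pair $i_l=i_{l+1}$ by a \emph{conditioned composition} of $p$ with the partition $\ke((i_1,\ldots,i_{l-1},i_{l+2},\ldots,i_k),(i_1,\ldots,i_k))$ built from copies of $|$ and one $\sqcap$, which is a different operation and is where compatibility genuinely enters the forward direction.
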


\begin{proof}
(1) Let $\cR$ be a skew category of partitions. At first we show that $F_{\infty}(\cR) \unlhd \ZZ_2^{*\infty}$ is an $\Sym_{\infty}$-invariant normal subgroup of $\ZZ_2^{*\infty}$. Recall that for any partition $p\in \cR(k,0)$ an element of $\Sym_{\infty}(\ind(p))$ is a labelling of $p$ with a maximal amount of numbers such that the labelling is constant on blocks. Now, let $p\in \cR(k,0),a_{\textbf{i}}\in \Sym_{\infty}(a_{\ind(p)})$ and $q\in \cR(k',0),a_{\textbf{i}}'\in \Sym_{\infty}(a_{\ind(q)})$.

\begin{enumerate}[$\bullet$]
\item $F_{\infty}(\cR)$ is closed under inverting elements since
\[ a_{\textbf{i}}^{-1} = (a_{i_1}\cdot \ldots \cdot a_{i_k})^{-1} = a_{i_k}\cdot \ldots \cdot a_{i_1} \in \Sym_{\infty}(a_{\ind(r)}) \in F_{\infty}(\cR) \] 
where $r\in \cR$ is the rotation of $p^*$ to the upper line.
\item $F_{\infty}(\cR)$ is closed under multiplication since
\[ a_{\textbf{i}} a_{\textbf{j}} \in \Sym_{\infty}(a_{\ind(p\ot_{(\textbf{i}\textbf{j},0)} q)}) \in F_{\infty}(\cR).\]
\item Let $j \in \NN$ and let $r\in \cR(k+2,0)$ be the partition obtained by rotating the uttermost left point of $\sqcup \ot_{((j,j,i_1,\ldots,i_k),0)} p$ to the right. It follows that
\[ a_j a_{\textbf{i}} a_j^{-1} = a_j a_{\textbf{i}} a_j \in \Sym_{\infty}(a_{\ind(r)}) \in F_{\infty}(\cR) \]
and hence $F_{\infty}(\cR)$ is closed under conjugation.
\end{enumerate}

Now, we show that $\cR=\cR'$ with 
\[ \cR':= \{p\mid p \text{ is a rotation of  } \ke(\textbf{i}) \text{ for some } a_{\textbf{i}} \in F_{\infty}(\cR)\}.\]
If $p\in \cR$ we have $a_{\ind(p)} \in F_{\infty}(\cR)$ and it follows that $p=\ke(\ind(p)) \in \cR'$. Thus we have $\cR \subseteq \cR'$. \\
So let $p\in \cR'$. As $\cR$ is a skew category of partitions, it is invariant under rotations and we may assume that $p=\ke(\textbf{i})$ for some $a_{\textbf{i}} \in F_{\infty}(\cR)$. By Definition \ref{def::Finfty} there exists a partition $q\in \cR$ such that $a_{\textbf{i}}=a_{\ind(q)}$ in $\ZZ_2^{*\infty}$. It follows from the definition of $\ZZ_2^{*\infty}$ that we can obtain $p$ from $q$ by successively either inserting the partition $\sqcup$ and possibly connect it to an existing block or removing two adjacent connected points. Thus it suffices to show that $\cR$ is closed under these operations since this would imply $p\in \cR$. For this purpose we consider $\textbf{j} \in \NN^k$ with $j_l=j_{l+1}$ for some $l<k$ and define the partition $p_{\textbf{j}}:= \ke((j_1,\ldots,j_{l-1},j_{l+2},\ldots,j_k),(j_1,\ldots, j_k))$. Since $p_{\textbf{j}}$ can be obtained by several connected tensor products of $|$ and $\sqcap$, it lies in $\cR$. Conditioned composition with partitions of this form yield exactly the required operations. Thus we can conclude that $\cR' \subseteq \cR$.

\ \\
(2) Let $\cR\subseteq P$ just be a rotation invariant subset such that $N:=F_{\infty}(\cR) \unlhd \ZZ_2^{*\infty}$ is an $\Sym_{\infty}$-invariant normal subgroup of $\ZZ_2^{*\infty}$ and such that 
$$\cR= \{p\mid p \text{ is a rotation of  } \ke(\textbf{i}) \text{ for some } a_{\textbf{i}} \in N\}.$$ We show that $\cR$ is a skew category of partitions. In the following we denote $\textbf{i}^{-1}:=(i_k,\ldots,i_1)$ for any $\textbf{i}=(i_1,\ldots,i_k)\in \NN^k$.

\begin{enumerate}[$\bullet$]
\item At first we show that $\cR$ can be expressed as
\[ \cR=\{ \ke (\textbf{i},\textbf{j}) \mid a_{\textbf{i}} a_{\textbf{j}}^{-1} \in N\}.\]
Let $\textbf{i} \in \NN^k, \textbf{j} \in \NN^l$. First, let $p=\ke (\textbf{i},\textbf{j})\in \cR(k,l)$. Then $\ke (\textbf{i}\textbf{j}^{-1},0)\in \cR(k+l,0)$ as $\cR$ is rotation invariant. Since $N$ is normal we have $\cR(k+l,0)= \{\ke(\textbf{f}) \mid a_{\textbf{f}} \in N\}$ and hence $a_{\textbf{i}\textbf{j}^{-1}} \in N$. It follows that  $a_{\textbf{i}} a_{\textbf{j}}^{-1}=a_{\textbf{i}\textbf{j}^{-1}} \in N$. Conversely, let $a_{\textbf{i}} a_{\textbf{j}}^{-1} \in N$. Then $\ke (\textbf{i},\textbf{j}) \in \cR(k,l)$ as it is a rotation of $\ke (\textbf{i}\textbf{j}^{-1})$ and $\cR=\{p\mid p \text{ is a rotation of  } \ke(\textbf{i}) \text{ for some } a_{\textbf{i}} \in N\}$ by assumption.
\item We have $\sqcup=\ke((1,1),0) \in \cR$ since $a_1^2=1\in N$ and $|=\ke((1),(1))\in \cR$ as $a_1a_1^{-1}=1\in N$.
\item For all $p=\ke (\textbf{i},\textbf{j}) \in \cR$ we have 
\[ p^*=\ke (\textbf{j},\textbf{i}) \text{ and } a_{\textbf{j}}a_{\textbf{i}}^{-1}=(a_{\textbf{i}}a_{\textbf{j}}^{-1})^{-1} \in N \]
and hence $\cR$ is closed under involution.
\item Let $p=\ke (\textbf{i},\textbf{j}) \in \cR$ and $q=\ke (\textbf{f},\textbf{g}) \in \cR$ and we consider the connected tensor product $p\ot_{(\textbf{i}\textbf{f},\textbf{j}\textbf{g})} q = \ke (\textbf{i}\textbf{f},\textbf{j}\textbf{g})$. Since 
\[ a_{\textbf{i}\textbf{f}}a_{\textbf{j} \textbf{g}}^{-1} = a_{\textbf{i}}a_{\textbf{f}}a_{\textbf{g}}^{-1} a_{\textbf{j}}^{-1}=a_{\textbf{j}}((a_{\textbf{j}}^{-1}(a_{\textbf{i}}a_{\textbf{j}}^{-1})a_{\textbf{j}})(a_{\textbf{f}}a_{\textbf{g}}^{-1})) a_{\textbf{j}}^{-1} \in N\]
$\cR$ is closed under taking connected tensor products.
\item Let $p=\ke (\textbf{i},\textbf{j}) \in \cR$ and $q=\ke (\textbf{f},\textbf{g}) \in \cR$ be compatible partitions. Then we can assume that $\textbf{j}=\textbf{f}$ and thus we have $qp=\ke(\textbf{i},\textbf{g})$. Since 
\[ a_{\textbf{i}}a_{\textbf{g}}^{-1}=a_{\textbf{i}}a_{\textbf{j}}^{-1}a_{\textbf{j}}a_{\textbf{g}}^{-1}\in N \]
$\cR$ is closed under conditioned composition.
\end{enumerate} 
\ \\
(3) At last, we consider an $\Sym_{\infty}$-invariant normal subgroup $N\unlhd \ZZ_2^{*\infty}$ and define
\[ \cR:= \{p\mid p \text{ is a rotation of  } \ke(\textbf{i}) \text{ for some } a_{\textbf{i}} \in N\}.\]
Then we have $N=F_{\infty}(\cR)$ by Remark \ref{rem::Finfty} and the claim follows by step (2).
\end{proof}

For all $n\in \NN$ we can naturally embed the $n$-fold free product groups $\ZZ_2^{*n}=\langle a_1,\ldots,a_n \rangle \hookrightarrow \langle a_1,a_2,\ldots \rangle =\ZZ_2^{*\infty}$. Theorem \ref{ThMain1} implies directly that for any skew category of partitions words in $F_{\infty}(\cR) \cap \ZZ_2^{*n}$ correspond to the set $\cR_n$ of partitions in $\cR$ with $n$ or less blocks (recall Def. \ref{def::partition}).
\begin{corollary} \label{cor::R_n}
Let $\cR$ be a skew category of partitions and $n\in \NN$. Then $F_n(\cR):=F_{\infty}(\cR) \cap \ZZ_2^{*n} \unlhd \ZZ_2^{*n}$ is an $\Sym_n$-invariant normal subgroup of $\ZZ_2^{*n}$ and we have 
\[ \cR_n = \{p\mid p \text{ is a rotation of  } \ke(\textbf{i}) \text{ for some } a_{\textbf{i}} \in F_n(\cR)\}.\]
\end{corollary}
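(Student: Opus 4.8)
The plan is to derive both assertions from Theorem~\ref{ThMain1} and from the dictionary between words and partitions, handling the group-theoretic statement and the combinatorial description of $\cR_n$ separately.

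For the first assertion I would argue formally. By Theorem~\ref{ThMain1} the set $F_\infty(\cR)\unlhd\ZZ_2^{*\infty}$ is an $\Sym_\infty$-invariant normal subgroup. Since the intersection of a normal subgroup with a subgroup is normal in that subgroup, applying this to $F_\infty(\cR)\unlhd\ZZ_2^{*\infty}$ and $\ZZ_2^{*n}=\langle a_1,\dots,a_n\rangle\leq\ZZ_2^{*\infty}$ gives $F_n(\cR)=F_\infty(\cR)\cap\ZZ_2^{*n}\unlhd\ZZ_2^{*n}$ at once. For $\Sym_n$-invariance I view $\Sym_n\leq\Sym_\infty$ as the permutations fixing every point larger than $n$; each such $\sigma$ permutes $\{a_1,\dots,a_n\}$ and hence restricts to an automorphism of $\ZZ_2^{*n}$, so that $\sigma(F_n(\cR))=\sigma(F_\infty(\cR))\cap\sigma(\ZZ_2^{*n})=F_\infty(\cR)\cap\ZZ_2^{*n}=F_n(\cR)$, using the $\Sym_\infty$-invariance of $F_\infty(\cR)$ in the middle equality.

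For the description of $\cR_n$ I would use the rotation-invariant dictionary recorded after Definition~\ref{def::Finfty}, which for the skew category $\cR$ reads $\ke(\textbf{i})\in\cR\Leftrightarrow a_{\textbf{i}}\in F_\infty(\cR)$, together with two elementary remarks: rotation preserves the number of blocks, and $\mathrm{bl}(\ke(\textbf{i}))$ equals the number of distinct entries of $\textbf{i}$. For ``$\subseteq$'', given $p\in\cR_n$ I rotate $p$ to a partition $p^{\to}\in\cR(m,0)$ on the upper line with the same $\leq n$ blocks; its canonical label $\ind(p^{\to})$ has entries in $\{1,\dots,\mathrm{bl}(p)\}\subseteq\{1,\dots,n\}$, whence $a_{\ind(p^{\to})}\in\ZZ_2^{*n}$, while $a_{\ind(p^{\to})}\in F_\infty(\cR)$ because $p^{\to}\in\cR(m,0)$; thus $a_{\ind(p^{\to})}\in F_n(\cR)$ and $p$ is a rotation of $\ke(\ind(p^{\to}))=p^{\to}$. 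For ``$\supseteq$'', from $a_{\textbf{i}}\in F_n(\cR)\subseteq F_\infty(\cR)$ the dictionary yields $\ke(\textbf{i})\in\cR$ and hence $p\in\cR$, after which it remains only to see that $\mathrm{bl}(p)\leq n$.

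The main obstacle is exactly this last block count. Membership $a_{\textbf{i}}\in\ZZ_2^{*n}$ constrains only a reduced expression of the group element $a_{\textbf{i}}$, whereas $\mathrm{bl}(p)=\mathrm{bl}(\ke(\textbf{i}))$ counts the distinct entries of the multi-index $\textbf{i}$ itself, and the relations $a_j^2=1$ allow cancellations that shrink the former below the latter (for instance $\ke((n{+}1,n{+}1,n{+}2,n{+}2))$ gives the trivial word although it has two blocks). Accordingly I would read the statement, and prove it, with $\textbf{i}$ ranging over representatives whose entries lie in $\{1,\dots,n\}$: this is the form in which $F_n(\cR)$ is actually populated by the canonical labels of the partitions with at most $n$ blocks, and it forces $\mathrm{bl}(\ke(\textbf{i}))\leq n$ directly. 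With this reading the two inclusions above match, and the identity $\cR_n=\{p\mid p\text{ is a rotation of }\ke(\textbf{i})\text{ for some }a_{\textbf{i}}\in F_n(\cR)\}$ becomes the combinatorial counterpart of $F_n(\cR)=F_\infty(\cR)\cap\ZZ_2^{*n}$.
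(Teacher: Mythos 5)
Your proof is correct and takes essentially the same route as the paper, which states this corollary as a direct consequence of Theorem \ref{ThMain1} without a separate argument: normality and $\Sym_n$-invariance of $F_n(\cR)$ by intersecting with $\ZZ_2^{*n}$, and the description of $\cR_n$ via the word--partition dictionary. Your caveat about the block count is well taken and your resolution is the intended reading: the quantification must be over multi-indices $\textbf{i}$ with entries in $\underline{n}$ (so that $a_{\textbf{i}}\in F_n(\cR)$ forces $\mathrm{bl}(\ke(\textbf{i}))\leq n$), exactly as the paper itself writes explicitly in Remark \ref{rem::thm3.1.} and Theorem \ref{ThMain3} with the condition $\textbf{i}\in\underline{n}^k$.
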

$\phantom{a}$

\section{Tensor categories of linear maps}
In the previous section we analysed the structure of skew categories of partitions. Our goal is to link skew categories of partitions to the intertwiner spaces of group-theoretical quantum groups. For this purpose we associate linear maps to partitions and show that the linear maps corresponding to a skew category of partitions form a tensor category. In the following we denote $\underline{n}:=\{1,\ldots,n\}$ for all $n\in \NN$.

\begin{definition}
Let $n\in \NN$. For all $p\in P(k,l)$ we define a linear map $\widehat{T}^{(n)}_p \in \Homm((\CC^n)^{\ot k},(\CC^n)^{\ot l})$ via
\begin{align*}
&\widehat{T}^{(n)}_p (e_{i_1} \ot \cdots \ot e_{i_k}) = \sum_{\textbf{j}\in \underline{n}^l}  \widehat{\delta}_p(\textbf{i},\textbf{j}) ~ e_{j_1} \ot \cdots \ot e_{j_l} \\
&\text{with } \widehat{\delta}_p := \mathbb{1}_{\Sym_{\infty}(\ind(p))}
\end{align*}  
for all $\textbf{i}=(i_1,\ldots,i_k) \in \underline{n}^k$.
\end{definition}

Note that we have $\widehat{T}^{(n)}_p=0$ if $p$ has more than $n$ blocks since in this case $\widehat{\delta}_p(\textbf{i},\textbf{j})=0$ for all $(\textbf{i},\textbf{j})\in \underline{n}^k \times \underline{n}^l$. The maps $\widehat{T}^{(n)}_p$ differ from the maps $T^{(n)}_p$ introduced by Banica and Speicher \cite[Def. 1.6,1.7]{BS09} (see also Def. \ref{def::T_p}) in the sense that $\widehat{\delta}_p$ is defined via $\Sym_{\infty}(\ind(p))$ and $\delta_p$ is defined via $\sSym_{\infty}(\ind(p))$ with $\sSym_{\infty}:=\{ \phi: \NN \to \NN \mid |\{n\in \NN \mid \phi(n)\neq n\}|<\infty \}$.

\newpage
The following lemma shows that the operations of skew categories of partitions and $p\mapsto \widehat{T}^{(n)}_p$ behave nicely. 
\begin{lemma} \label{lem::T_p_and_skew_opertions}
Let $n\in \NN,p\in P(k,l)$ and $q \in P(k',l')$. 
\begin{enumerate}[(i)]
\item We have 
\[ (\widehat{T}^{(n)}_p)^* = \widehat{T}^{(n)}_{p^*} .\]
\item We define $L:=\{ p\ot_{(\textbf{if},\textbf{jg})} q \mid (\textbf{i},\textbf{j})\in \Sym_{\infty}(\ind(p)), (\textbf{f},\textbf{g})\in \Sym_{\infty}(\ind(q))\}$. Then we have 
\[\widehat{T}^{(n)}_p \ot \widehat{T}^{(n)}_q = \sum_{r\in L} \widehat{T}^{(n)}_r .\]
\item Let $l=k'$. If $p$ and $q$ are not compatible then $\widehat{T}^{(n)}_q \circ \widehat{T}^{(n)}_p =0$. Otherwise we define $M:=\{ q\cdot_{(\textbf{i},\textbf{h},\textbf{g})} p \mid (\textbf{i},\textbf{h})\in \Sym_{\infty}(\ind(p)), (\textbf{h},\textbf{g})\in \Sym_{\infty}(\ind(q))\}$. Then we have
\[ \widehat{T}^{(n)}_q \circ \widehat{T}^{(n)}_p = \sum_{r\in M} \left( \prod_{z=\text{bl}(r)}^{\text{bl}(r)+l(q,p)-1} n-z \right) \widehat{T}^{(n)}_r \]
where we put $\prod_{z=\text{bl}(r)}^{\text{bl}(r)+l(q,p)-1} n-z =1$ if $l(q,p)=0$.
\end{enumerate}
\end{lemma}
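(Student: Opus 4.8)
The plan is to prove all three identities by comparing matrix coefficients in the standard basis, relying throughout on the observation (from the remark after Definition~\ref{def::ker_ind}) that $\widehat{\delta}_p(\textbf{i},\textbf{j})=1$ exactly when $\ke(\textbf{i},\textbf{j})=p$ and $\widehat{\delta}_p(\textbf{i},\textbf{j})=0$ otherwise. Consequently, if $S$ is any set of pairwise distinct partitions, then the coefficient of $\sum_{r\in S}\widehat{T}^{(n)}_r$ at a fixed pair of multi-indices is $1$ if the kernel of that pair lies in $S$ and $0$ otherwise; every right-hand side above is therefore read off from a single kernel partition. This reduces each statement to an identity between integers indexed by $(\textbf{i},\textbf{j})$ or $(\textbf{i},\textbf{g})$.

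For (i) I would use that all entries of $\widehat{T}^{(n)}_p$ are real, so the adjoint is the transpose, and compute $\langle(\widehat{T}^{(n)}_p)^{*}e_{\textbf{j}},e_{\textbf{i}}\rangle=\widehat{\delta}_p(\textbf{i},\textbf{j})$ against $\langle\widehat{T}^{(n)}_{p^{*}}e_{\textbf{j}},e_{\textbf{i}}\rangle=\widehat{\delta}_{p^{*}}(\textbf{j},\textbf{i})$; these agree because turning $p$ upside down swaps the two rows of every labelling, i.e.\ $\ke(\textbf{j},\textbf{i})=p^{*}$ iff $\ke(\textbf{i},\textbf{j})=p$. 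For (ii) the coefficient of $\widehat{T}^{(n)}_p\ot\widehat{T}^{(n)}_q$ at $\big((\textbf{i},\textbf{f}),(\textbf{j},\textbf{g})\big)$ factorises as $\widehat{\delta}_p(\textbf{i},\textbf{j})\,\widehat{\delta}_q(\textbf{f},\textbf{g})$, which is $1$ iff $\ke(\textbf{i},\textbf{j})=p$ and $\ke(\textbf{f},\textbf{g})=q$. I would then note that restricting the kernel $\ke\big((\textbf{i},\textbf{f}),(\textbf{j},\textbf{g})\big)$ to the legs of $p$ gives back $\ke(\textbf{i},\textbf{j})$, and to the legs of $q$ gives $\ke(\textbf{f},\textbf{g})$; hence membership of $\ke\big((\textbf{i},\textbf{f}),(\textbf{j},\textbf{g})\big)$ in $L$ is equivalent to those same two conditions, and by the first paragraph this is the coefficient of $\sum_{r\in L}\widehat{T}^{(n)}_r$.

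Part (iii) is where the real work lies. Writing the middle index as $\textbf{h}\in\underline{n}^{l}$, the coefficient of $\widehat{T}^{(n)}_q\circ\widehat{T}^{(n)}_p$ at $(\textbf{i},\textbf{g})$ is the number $N(\textbf{i},\textbf{g})$ of $\textbf{h}$ satisfying $\ke(\textbf{i},\textbf{h})=p$ and $\ke(\textbf{h},\textbf{g})=q$ simultaneously. I would first dispose of the vanishing cases: if $p$ and $q$ are not compatible the block pattern that $\ke(\textbf{i},\textbf{h})=p$ imposes on $\textbf{h}$ cannot coincide with the one that $\ke(\textbf{h},\textbf{g})=q$ imposes, so no $\textbf{h}$ exists; and if $b>n$ then $\textbf{h}$ cannot carry the $b$ distinct values that $\ke(\textbf{i},\textbf{h})=p$ forces on the middle row, so again $N\equiv0$. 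In the remaining case every admissible $\textbf{h}$ is constant on each middle block and takes distinct values on distinct middle blocks; the values on the middle blocks meeting the upper row of $p$ or the lower row of $q$ are prescribed by $\textbf{i}$ and $\textbf{g}$, while the $l(q,p)$ loop blocks are free except for the injectivity constraint. Filling in the loop blocks one at a time, each must avoid the values already in use, which produces the descending product $\prod_{c=a}^{b-1}(n-c)$ with exactly $l(q,p)=b-a$ factors; and the mere existence of an admissible $\textbf{h}$ forces the prescribed values to be mutually consistent, which is precisely the condition that $\ke(\textbf{i},\textbf{g})$ be a connected conditioned composition of $p$ and $q$, i.e.\ lie in $M$. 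Assembling these coefficients over all $(\textbf{i},\textbf{g})$ then yields the claimed identity.

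The hard part will be the bookkeeping in this final count, and I expect it to be the main obstacle. The novelty compared with the Banica--Speicher composition $T^{(n)}_q\circ T^{(n)}_p=n^{\,l(q,p)}\,T^{(n)}_{qp}$ is that $\widehat{\delta}$ is built from $\Sym_{\infty}$ rather than $\sSym_{\infty}$, so it enforces \emph{injectivity} across blocks: a loop value must be chosen distinct from every already committed value and from the other loop values, turning the naive factor $n^{\,l(q,p)}$ into the falling factorial. The delicate points are therefore to show that the number of committed values present before the loops are filled in is controlled by $a=b-l(q,p)$, so that the loop assignments contribute exactly $\prod_{c=a}^{b-1}(n-c)$, and to verify that the set $M$ captures precisely the kernels $\ke(\textbf{i},\textbf{g})$ that occur, so that the coefficient is supported on $M$ with the correct multiplicity. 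I would handle this by tracking, block by block, how the injectivity constraints of $p$ and of $q$ combine along the shared middle row, which is exactly the place where the interplay between the loop count and the distinctness of labels must be controlled carefully.
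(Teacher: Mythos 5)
Parts (i) and (ii) of your proposal are correct and follow essentially the same route as the paper: reduce everything to identities between the indicator functions $\widehat{\delta}$, using that $\widehat{\delta}_p(\textbf{i},\textbf{j})=1$ exactly when $\ke(\textbf{i},\textbf{j})=p$.

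Part (iii), however, has a genuine gap, and it sits exactly at the point you yourself flagged as delicate: the claim that the number of values committed before the loop blocks are filled in is $a=b-l(q,p)$. This is false. Since $\widehat{\delta}_p$ and $\widehat{\delta}_q$ are indicators of $\Sym_{\infty}$-orbits, an admissible middle index $\textbf{h}$ must give each loop block a value distinct from the values of \emph{all} other blocks of $p$ and of $q$, including the blocks that never meet the middle row; those values are entries of $\textbf{i}$ and $\textbf{g}$. For $r=\ke(\textbf{i},\textbf{g})\in M$ the number of committed values is therefore $\mathrm{bl}(r)$, not $a$, so the count of admissible $\textbf{h}$ is $\prod_{c=0}^{l(q,p)-1}\bigl(n-\mathrm{bl}(r)-c\bigr)$, which depends on $r$; no constant prefactor can work. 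Concretely, take $p=q=\ke((1),(2))\in P(1,1)$, so that $\widehat{T}^{(n)}_p=J-I$ with $J$ the all-ones matrix. Then $b=1$, $l(q,p)=1$, $a=0$, and $M=\{\idpart,\ \ke((1),(2))\}$, so the claimed formula gives $n\bigl(\widehat{T}^{(n)}_{\idpart}+\widehat{T}^{(n)}_{\ke((1),(2))}\bigr)=nJ$, whereas in fact
\[ \widehat{T}^{(n)}_q \circ \widehat{T}^{(n)}_p=(J-I)^2=(n-2)J+I=(n-1)\,\widehat{T}^{(n)}_{\idpart}+(n-2)\,\widehat{T}^{(n)}_{\ke((1),(2))}. \]
So the identity you set out to prove is false as stated, and no amount of bookkeeping in the final count can rescue the constant $\prod_{c=a}^{b-1}(n-c)$.

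You should know that the paper's own proof commits precisely the same error (``there are $n-a$ numbers left to label the loops''), so your proposal reproduces the published argument, flaw included. The corrected statement, for compatible $p$ and $q$, is
\[ \widehat{T}^{(n)}_q \circ \widehat{T}^{(n)}_p=\sum_{r\in M}\Bigl(\prod_{c=0}^{l(q,p)-1}\bigl(n-\mathrm{bl}(r)-c\bigr)\Bigr)\,\widehat{T}^{(n)}_r , \]
which also subsumes the case $b>n$ (every summand then vanishes, either because $\widehat{T}^{(n)}_r=0$ or because one factor of the product is zero). This repair does not disturb the rest of the paper: the applications in Theorem \ref{ThMain2} only use which partitions occur in the expansion (namely the set $M$ of connected conditioned compositions) and that the coefficient of $qp$ is nonzero for $n$ large, both of which still hold.
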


\begin{proof}
An easy calculation of the coefficients of $(\widehat{T}^{(n)}_{p})^*,\widehat{T}^{(n)}_{p} \ot \widehat{T}^{(n)}_{q}$ and $\widehat{T}^{(n)}_{q}\circ \widehat{T}^{(n)}_{p}$ (see \cite[Prop.1.9]{BS09}) shows that it suffices to prove the following for all $(\textbf{i},\textbf{j})\in \NN^k\times \NN^l$ and $(\textbf{f},\textbf{g})\in \NN^{k'}\times \NN^{l'}$.
\begin{enumerate}[(i)]
\item We have $\widehat{\delta}_p (\textbf{i},\textbf{j}) = \widehat{\delta}_{p^*}(\textbf{j},\textbf{i})$.
\item We have $\widehat{\delta}_p (\textbf{i},\textbf{j}) \cdot \widehat{\delta}_q (\textbf{f},\textbf{g}) = \sum_{r\in L} \widehat{\delta}_r (\textbf{if},\textbf{jg})$.
\item Let $l=k'$. Then we have 
\[ \sum_{\textbf{h}\in \underline{n}^l} \widehat{\delta}_p (\textbf{i},\textbf{h}) \cdot \widehat{\delta}_q (\textbf{h},\textbf{g}) =0 \] if $p$ and $q$ are not compatible and 
\[ \sum_{\textbf{h}\in \underline{n}^l} \widehat{\delta}_p (\textbf{i},\textbf{h}) \cdot \widehat{\delta}_q (\textbf{h},\textbf{g}) = \sum_{r\in M} \left( \prod_{z=\text{bl}(r)}^{\text{bl}(r)+l(q,p)-1} n-z \right) \widehat{\delta}_{r}(\textbf{i},\textbf{g})\] 
otherwise.
\end{enumerate}

By definition $\widehat{\delta}_{p}(\textbf{i},\textbf{j})=1$ if and only if $(\textbf{i},\textbf{j})\in \Sym_{\infty}(\ind(p))$. By the definition of involution this is equivalent to $(\textbf{j},\textbf{i})\in \Sym_{\infty}(\ind(p^*))$ which holds if and only if $\widehat{\delta}_{p^*} (\textbf{j},\textbf{i})=1$. This proves (i).

As for (ii), we have $\widehat{\delta}_p (\textbf{i},\textbf{j}) \cdot \widehat{\delta}_q (\textbf{f},\textbf{g})=1$ if and only if $(\textbf{i},\textbf{j})\in \Sym_{\infty}(\ind(p))$ and $(\textbf{f},\textbf{g})\in \Sym_{\infty}(\ind(q))$. By the definition of the connected tensor product and the set $L$ this is equivalent to $\ke(\textbf{if},\textbf{jg}) \in L$. This holds if and only if $\sum_{r\in L} \widehat{\delta}_r (\textbf{if},\textbf{jg}) = \widehat{\delta}_{p\ot_{(\textbf{if},\textbf{jg})} q} (\textbf{if},\textbf{jg}) =1$.

For proving (iii), let $l=k'$ and recall Definition \ref{def::ker_ind}. At first we consider the case that $p$ and $q$ are not compatible. Let $\textbf{h}\in \underline{n}^l$ and $\delta_{\widehat{p}}(\textbf{i},\textbf{h})=1$. This implies $\textbf{h} \in \Sym_{\infty}(\textbf{j}^{(p)})$. By the definition of compatibility we have $\Sym_{\infty}(\textbf{j}^{(p)})\cap \Sym_{\infty}(\textbf{i}^{(q)})=\emptyset$ and hence $\textbf{h} \notin \Sym_{\infty}(\textbf{i}^{(q)})$. It follows that $\delta_{\widehat{q}}(\textbf{h},\textbf{g})=0$. Similarly, one can show that $\delta_{\widehat{q}}(\textbf{h},\textbf{g})=1$ implies $\delta_{\widehat{p}}(\textbf{i},\textbf{h})=0$. Thus we have $\sum_{\textbf{h}\in \underline{n}^l} \widehat{\delta}_p (\textbf{i},\textbf{h}) \cdot \widehat{\delta}_q (\textbf{h},\textbf{g})=0$.\\
So let $p$ and $q$ be compatible. By definition of $M$ there exists a partition $r\in M$ with $\widehat{\delta}_{r}(\textbf{i},\textbf{g})=1$ if and only if the set $\{ \textbf{h}\in \underline{n}^l \mid (\textbf{i},\textbf{h})\in \Sym_{\infty}(\ind(p)), (\textbf{h},\textbf{g})\in \Sym_{\infty}(\ind(q)) \}=\{ \textbf{h}\in \underline{n}^l \mid \widehat{\delta}_p (\textbf{i},\textbf{h}) = \widehat{\delta}_q (\textbf{h},\textbf{g})=1 \}$ is not empty. So we assume that this set is not empty and determine its cardinality. We consider the connected conditioned composition of $p$ and $q$ where we obtain $r$ and a multi-index $\textbf{h}\in \underline{n}^l$ with $\widehat{\delta}_p (\textbf{i},\textbf{h}) = \widehat{\delta}_q (\textbf{h},\textbf{g})=1$. Then there are $\text{bl}(r)$ blocks of $p$ or $q$ which are not loops in the composition and hence the corresponding entries of $\textbf{h}$ are determined by $\textbf{i}$ and $\textbf{g}$. So there are $n-\text{bl}(r)$ numbers left to label the loops of the composition and hence we have $\prod_{z=\text{bl}(r)}^{\text{bl}(r)+l(q,p)-1} n-z$ different choices to do so. Thus the claim follows.
\end{proof}

Now, we can prove that $p\mapsto \widehat{T}^{(n)}_p$ maps skew categories to tensor categories with duals.

\begin{definition}[Tensor category with duals {\cite[Def.1.2,3.5]{BS09}}] \label{def::tensor_cat}
Let $n\in \NN$. We call a collection of vector spaces $\mathcal{H}(k,l) \subseteq \Homm((\CC^n)^{\ot k}, (\CC^n)^{\ot l})$ for all $k,l\in \NN_0$ a \emph{tensor category with duals}, if the following holds.
\begin{enumerate}[(i)]
\item $\mathcal{H}$ is closed under taking tensor products, i.e. $T \in \mathcal{H}(k,l),T' \in \mathcal{H}(k',l')$ implies $T\otimes T' \in \mathcal{H}(k,l)\otimes \mathcal{H}(k',l') \cong \mathcal{H}(k+k',l+l')$.
\item $\mathcal{H}$ is closed under composition, i.e. $T \in \mathcal{H}(k,l)$ and  $T' \in \mathcal{H}(l,l')$ implies $T'T \in\mathcal{H}(k,l')$.
\item $\mathcal{H}$ is closed under involution, i.e. $T \in \mathcal{H}(k,l)$ implies $T^* \in \mathcal{H}(k,l)^*\cong \mathcal{H}(l,k)$.
\item The identity id$:\CC^n \to \CC^n,~x\mapsto x$ is in $\mathcal{H} (1,1)$.
\item Let $e_1,\ldots ,e_n$ be the standard basis of $\CC^n$. Then $\zeta: \CC \to (\CC^n)^{\ot 2}, 1 \mapsto \sum_{i=1}^n e_i \ot e_i$ is in $\mathcal{H} (0,2)$.
\end{enumerate}
\end{definition}

Recall, that for a subset $D \subseteq P$ we denote the set of all partitions in $D$ with at most $n$ blocks by $D_n$ (see Def. \ref{def::partition}).

\begin{lemma} \label{lem::lin_independent}
For all $k,l\in \NN_0$ the elements $\{\widehat{T}_p^{(n)} \mid p\in P_n(k,l) \}$ are linearly independent.
\end{lemma}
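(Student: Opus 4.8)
The plan is to exploit the sharp difference between the modified weight $\widehat\delta_p=\mathbb 1_{\Sym_{\infty}(\ind(p))}$ and the Banica--Speicher weight $\delta_p$. Whereas $\delta_p(\mathbf i,\mathbf j)=1$ holds for every multi-index whose kernel is merely \emph{coarser than or equal to} $p$ (so that several partitions contribute to one matrix entry), the weight $\widehat\delta_p$ detects the partition $p$ \emph{exactly}. Indeed, by the remark following the definition of $\Sym_{\infty}$ one has $(\mathbf i,\mathbf j)\in\Sym_{\infty}(\ind(p))$ if and only if $\ke(\mathbf i,\mathbf j)=p$, whence
\[ \widehat\delta_p(\mathbf i,\mathbf j)=1 \quad\Longleftrightarrow\quad \ke(\mathbf i,\mathbf j)=p. \]
Consequently, for a fixed index $(\mathbf i,\mathbf j)\in\underline n^{\,k}\times\underline n^{\,l}$ there is precisely one partition $p$, namely $p=\ke(\mathbf i,\mathbf j)$, with $\widehat\delta_p(\mathbf i,\mathbf j)=1$; for every other $q\in P(k,l)$ the coefficient $\widehat\delta_q(\mathbf i,\mathbf j)$ vanishes. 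In matrix terms this says that the maps $\widehat T_p^{(n)}$ have pairwise disjoint supports.

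First I would assume a vanishing linear combination $\sum_{p\in P_n(k,l)}c_p\,\widehat T_p^{(n)}=0$ and isolate a single matrix entry: pairing the image $\widehat T_p^{(n)}(e_{i_1}\ot\cdots\ot e_{i_k})$ with the basis vector $e_{j_1}\ot\cdots\ot e_{j_l}$ extracts the scalar $\sum_{p}c_p\,\widehat\delta_p(\mathbf i,\mathbf j)$, which must equal $0$. Next, for each fixed $p\in P_n(k,l)$ I would substitute the distinguished representative $(\mathbf i,\mathbf j)=\ind(p)$. Since $p$ has at most $n$ blocks, its lexicographically minimal labelling uses only the labels $1,\ldots,\text{bl}(p)$, so $\ind(p)$ genuinely lies in $\underline n^{\,k}\times\underline n^{\,l}$ and is a legitimate index for the $n$-dimensional maps. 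By the displayed equivalence, only the term indexed by $p$ survives at this entry, so the extracted scalar equals $c_p$; the vanishing of the combination forces $c_p=0$. As $p$ was arbitrary, all coefficients vanish and the family is linearly independent.

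I expect no genuine obstacle: once the exact-kernel characterisation of $\widehat\delta_p$ is established, linear independence follows immediately from a single evaluation, in sharp contrast to the usual Banica--Speicher setting where overlapping supports force a Gram-matrix or M\"obius-inversion argument organised by block number. The only point deserving a word of care is the role of the restriction to $P_n(k,l)$: partitions with more than $n$ blocks yield $\widehat T_p^{(n)}=0$ and must be excluded from the family, while for $p$ with $\text{bl}(p)\le n$ one must confirm that a kernel-realising index exists inside $\underline n^{\,k}\times\underline n^{\,l}$ --- which is exactly why choosing $\ind(p)$ works.
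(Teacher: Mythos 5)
Your proof is correct and follows essentially the same route as the paper: both evaluate the vanishing linear combination at the matrix entry indexed by $\ind(p)$, which lies in $\underline{n}^k\times\underline{n}^l$ precisely because $p$ has at most $n$ blocks, and use the fact that $\widehat{\delta}_q(\ind(p))=1$ if and only if $q=p$ to isolate each coefficient. Your explicit ``disjoint supports'' phrasing of $\widehat{\delta}_p(\mathbf{i},\mathbf{j})=1\Leftrightarrow\ke(\mathbf{i},\mathbf{j})=p$ is just the paper's remark that $p=\ke(\mathbf{i},\mathbf{j})$ iff $(\mathbf{i},\mathbf{j})\in\Sym_{\infty}(\ind(p))$, so the two arguments coincide.
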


\begin{proof}
Let $m\in \NN$, $a_x\in \CC$ and $p_x\in P_n(k,l)$ for all $x\in \underline{m}$ such that $\sum_{x=1}^m a_x \widehat{T}_{p_x}^{(n)} =0$.
Then we have
\[ 0=\sum_{x=1}^m a_x \widehat{T}_{p_x}^{(n)} (e_{i_1}\ot \ldots \ot e_{i_k}) 
= \sum_{x=1}^m \sum_{\textbf{j}\in \underline{n}^l} a_x \widehat{\delta}_{p_x} (\textbf{i},\textbf{j}) ~e_{j_1}\ot \ldots \ot e_{j_l} \text{ for all } \textbf{i} \in \underline{n}^k \]
and hence
\[ 0= \sum_{x=1}^m a_x \widehat{\delta}_{p_x} (\textbf{i},\textbf{j})  \text{ for all } (\textbf{i},\textbf{j}) \in \underline{n}^k \times \underline{n}^l.\]
Let $y\in \underline{m}$ and we evaluate this equation in $\ind (p_y) \in \underline{n}^k \times \underline{n}^l$. Note that for all $x\in \underline{m}$ we have $\widehat{\delta}_{p_x} (\ind(p_y))=1$ if and only if $\ind(p_y) \in \Sym_{\infty}(\ind(p_x))$. As $p_x$ has at most $n$ blocks for all $x\in \underline{m}$ we can encode $p_x$ uniquely by $\ind(p_x)$ and hence $\ind(p_y) \in \Sym_{\infty}(\ind(p_x))$ if and only $x=y$. This implies
\[ 0 = \sum_{x=1}^m a_x \widehat{\delta}_{p_x} (\ind (p_y)) = a_y \]
and it follows that the elements $\{\widehat{T}_p^{(n)} \mid p\in P_n(k,l) \}$ are linearly independent.
\end{proof}

\begin{theorem} \label{ThMain2}
Let $\cR\subseteq P$. Then $\cR$ is a skew category of partitions if and only if $\lspan \{\widehat{T}_p^{(n)} \mid p\in \cR(k,l)\}, k,l\in \NN_0$ is a tensor category with duals for all $n\in \NN$.  
\end{theorem}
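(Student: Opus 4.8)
The plan is to prove both implications by reducing everything to the coefficient-level identities already established in Lemma~\ref{lem::T_p_and_skew_opertions} and the linear independence from Lemma~\ref{lem::lin_independent}. The key structural observation is that Lemma~\ref{lem::T_p_and_skew_opertions} tells us exactly how $\widehat{T}^{(n)}_{p^*}$, $\widehat{T}^{(n)}_p \ot \widehat{T}^{(n)}_q$, and $\widehat{T}^{(n)}_q \circ \widehat{T}^{(n)}_p$ decompose as linear combinations of maps $\widehat{T}^{(n)}_r$ indexed by partitions $r$ arising from the \emph{modified} operations (involution, connected tensor product, connected conditioned composition). So the span of $\{\widehat{T}^{(n)}_p \mid p \in \cR\}$ being closed under the linear-map operations should correspond precisely to $\cR$ being closed under the skew operations.

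First I would prove the forward direction. Assume $\cR$ is a skew category of partitions. I check the five defining properties of a tensor category with duals. Closure under involution is immediate from part~(i): $(\widehat{T}^{(n)}_p)^* = \widehat{T}^{(n)}_{p^*}$ and $p^* \in \cR$. For closure under tensor products I use part~(ii), which expresses $\widehat{T}^{(n)}_p \ot \widehat{T}^{(n)}_q$ as $\sum_{r\in L}\widehat{T}^{(n)}_r$ where every $r$ is a connected tensor product $p\ot_{(\textbf{if},\textbf{jg})}q$, hence lies in $\cR$. For composition I use part~(iii): when $p,q$ are incompatible or $b>n$ the composite is $0\in\lspan$; otherwise it is a scalar multiple of $\sum_{r\in M}\widehat{T}^{(n)}_r$ with each $r$ a connected conditioned composition, which lies in $\cR$ by Lemma~\ref{lem::skew_cat}(ii). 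The maps $\mathrm{id}=\widehat{T}^{(n)}_{|}$ and $\zeta=\widehat{T}^{(n)}_{\sqcup}$ are in the span since $|,\sqcup\in\cR$ by definition. This handles all of (i)--(v).

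For the converse, suppose $\lspan\{\widehat{T}^{(n)}_p \mid p\in\cR(k,l)\}$ is a tensor category with duals for \emph{all} $n$; I must show $\cR$ is closed under involution, connected tensor products, and conditioned composition, and contains $|,\sqcup$. The containment of $|,\sqcup$ follows from (iv),(v) together with linear independence (Lemma~\ref{lem::lin_independent}): choosing $n$ large, $\widehat{T}^{(n)}_{|}$ and $\widehat{T}^{(n)}_{\sqcup}$ lie in the span, and since the $\widehat{T}^{(n)}_p$ for $p\in P_n$ are linearly independent, the partitions $|,\sqcup$ themselves must lie in $\cR$ (this is the crucial point where we read off membership of a \emph{partition} from membership of its \emph{linear map}). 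The main obstacle is exactly this extraction step, and I expect to handle it uniformly as follows: given any partition $r$ produced by a skew operation applied to $p,q\in\cR$, the expansion formulas in Lemma~\ref{lem::T_p_and_skew_opertions} show $\widehat{T}^{(n)}_r$ appears with nonzero coefficient in a linear combination of maps $\widehat{T}^{(n)}_s$, $s\in\cR$, that lies in the span by the tensor-category hypothesis; taking $n$ larger than the number of blocks of all partitions involved so that Lemma~\ref{lem::lin_independent} applies, linear independence forces $r\in\cR$. Concretely, for closure under connected tensor products I isolate a single $r = p\ot_{(\textbf{if},\textbf{jg})}q$ by fixing the multi-indices, and for conditioned composition I use part~(iii) with the nonzero scalar $\prod_{c=a}^{b-1}(n-c)$ (nonzero once $n\geq b$) dividing out, together with the same independence argument, to conclude each $r\in M\subseteq\cR$; closure under involution follows directly from part~(i) and independence. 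Since these are precisely the skew-category axioms, $\cR$ is a skew category of partitions.
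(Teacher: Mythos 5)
Your proposal is correct and follows essentially the same route as the paper: the forward direction via Lemma \ref{lem::T_p_and_skew_opertions} together with Lemma \ref{lem::skew_cat}(ii), and the converse by combining the decomposition formulas of Lemma \ref{lem::T_p_and_skew_opertions} with the linear independence of $\{\widehat{T}_p^{(n)} \mid p\in P_n(k,l)\}$ from Lemma \ref{lem::lin_independent} for $n$ large. Your write-up merely makes explicit the block-count bookkeeping and the extraction of individual partitions from the sums over $L$ and $M$, which the paper leaves implicit.
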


\begin{proof}
If $\cR$ is a skew category of partitions, Lemma \ref{lem::T_p_and_skew_opertions} and Lemma \ref{lem::skew_cat} imply that $\lspan \{\widehat{T}_p^{(n)} \mid p\in \cR(k,l)\},k,l\in \NN_0$ is a tensor category with duals for all $n\in \NN$. \\
To prove the converse note that $\lspan \{\widehat{T}_p^{(n)} \mid p\in \cR(k,l)\}=\lspan \{\widehat{T}_p^{(n)} \mid p\in \cR_n(k,l) \}$ for all $k,l\in \NN_0$ and the elements $\{\widehat{T}_p^{(n)} \mid p\in \cR_n(k,l) \}$ are lineraly independent by Lemma \ref{lem::lin_independent}. Thus if $\lspan \{\widehat{T}_p^{(n)} \mid p\in \cR(k,l)\}$ is a tensor category with duals for all $n\in \NN$ it follows from Lemma \ref{lem::T_p_and_skew_opertions} that $\cR$ is a skew category of partitions.
\end{proof}
\medskip

\section{The intertwiner spaces of group-theoretical quantum groups}
In this section we will prove that the tensor categories with duals induced by skew categories of partitions model the intertwiner spaces of most of the group-theoretical quantum groups. Therefore, we have to recall some basics on compact matrix quantum groups, easy quantum groups and group-theoretical quantum groups. 

\subsection{Compact matrix quantum groups and easy quantum groups}
For any compact group $G$ the function space $C(G)$ is a commutative C*-algebra with a comultiplication which fulfils some dualised group properties. Woronowicz generalised this concept in a non-commutative setting by defining compact quantum groups as C*-algebras with comultiplication fulfilling the dualised group properties. In fact a compact quantum group arises from a compact group if and only if its C*-algebra is commutative. In 1987, Woronowicz defined compact matrix quantum groups, a subclass of compact quantum groups, as follows.

\begin{definition}[Compact matrix quantum group {\cite{Wo87}}] \label{def::CMQG}
$\phantom{a}$\\
A \emph{compact matrix quantum group (CMQG)} consists of a C*-algebra $A$, a matrix $u=(u_{ij}) \in A^{n\times n}$, called \emph{fundamental corepresentation}, and a *-homomorphism $\Delta :A \to A \ot A$, called \emph{comultiplication}, such that the following holds.
\begin{enumerate}[1.)]
\item The elements $\{ u_{ij} \mid 1\leq i,j \leq n \}$ generate $A$ in the sense that the generated *-algebra is dense in $A$.
\item The matrix $u=(u_{ij})$ is unitary and its transpose $u^t=(u_{ji})$ is invertible.
\item We have $\Delta(u_{ij})=\sum_{k=1}^n u_{ik} \ot u_{kj}$ for all $i,j\in \underline{n}$.
\end{enumerate}
We write $(A,u,n)$ for a compact matrix quantum group $A$ with fundamental corepresentation $u=(u_{ij}) \in A^{n\times n}$. (The common notation is $(A,u)$ as the size $n$ is usually implicit but this is not always the case in this article.)  
\end{definition}

\begin{example}
Let $n\in \NN$. Wang defined the \emph{free symmetric quantum group} $\Sym_n^+$ (cf. \cite{Wa98}) and the \emph{free orthogonal quantum group} $O_n^+$ (cf. \cite{Wa95}) via
\begin{align*}
C(\Sym_n^+) := C^*(u_{ij}, 1\leq i,j\leq n\mid &u_{ij}^*=u_{ij}=u_{ij}^2,~\sum_{k=1}^n u_{ik} = \sum_{k=1}^n u_{kj} = 1,\\
 &u_{ij}u_{ik}=u_{ji}u_{ki}=0 \text{ for } j\neq k),\\
C(O_n^+):= C^*(u_{ij}, 1\leq i,j\leq n \mid &u_{ij}^*=u_{ij},~u^tu=uu^t=1).
\end{align*}
We have 
\begin{align*}
&C(\Sym_n) \cong C(\Sym_n^+)/\{u_{ij}u_{kl}=u_{kl}u_{ij}\} \text{ and } \\
&C(O_n) \cong C(O_n^+)/\{u_{ij}u_{kl}=u_{kl}u_{ij}\}.
\end{align*}
\end{example}

We refer for instance to \cite{RW15} for the next two definitions.

\begin{definition}[Compact matrix quantum subgroup] \label{def::quantum_subgroup}
Let $(A,u,n)$ and $(B,v,m)$ be CMQGs. Then $(A,u,n)$ is called a \emph{compact matrix quantum subgroup} of $(B,v,m)$ if $n=m$ and if there exists a surjective *-homomorphism  preserving the fundamental corepresentation $A \overset{\sim}{\twoheadrightarrow} B$.  \\
\end{definition}

In the following we will only consider CMQGs $(A,u,n)$ in their maximal versions which have $C(S_n)$ as compact matrix quantum subgroup and are compact matrix quantum subgroups of  $O_n^+$ (see \cite[§ 2.2]{RW15} for more details). Thus in the following any fundamental representation of a CMQG will be orthogonal.

In 1988, Woronowicz proved a Tannaka-Krein type result for CMQGs showing that any CMQG can be fully recovered from its intertwiner spaces.

\begin{definition}[Intertwiner spaces]
Let $(A,u,n)$ be a CMQG. Then the \emph{intertwiner spaces of $A$} for $k,l\in \NN_0$ are defined as
\[ \Homm_{A}(k,l) := \{ T:(\CC^n)^{\ot k} \to (\CC^n)^{\ot l} \text{ linear} \mid Tu^{\ot k} = u^{\ot l} T \} ,\]
where we think of $u^{\ot k}$ and $u^{\ot l}$ as endomorphism of $(\CC^n)^{\ot k} \ot A$ and $(\CC^n)^{\ot l} \ot A$, respectively, and where we view $T$ as a morphism from $(\CC^n)^{\ot k} \ot A$ to $(\CC^n)^{\ot l} \ot A$ with $T(e_{i_1}\ot \cdots \ot e_{i_k} \ot \hat{u}) = T(e_{i_1}\ot \cdots \ot e_{i_k}) \ot \hat{u}$ for all $i_1,\ldots ,i_k \in \underline{n}$ and $\hat{u} \in A$.
\end{definition}

\begin{theorem}[Tannaka-Krein {\cite{Wo88}}] \label{thm::tannaka_krein}
The following construction induces an \linebreak inclusion-inverting one-to-one correspondence between CMQGs and tensor categories with duals:\\
(1) To a CMQG $(A,u,n)$ we associate its intertwiner spaces $\Homm_{A}(k,l),k,l\in \NN_0$ which from a tensor category with duals. \\
(2) To a tensor category with duals $\mathcal{H}(k,l) \subseteq \Homm((\CC^n)^{\ot k}, (\CC^n)^{\ot l}), k,l\in \NN_0$ we associate the CMQG 
\[C^*(u_{ij}, 1\leq i,j\leq n \mid u_{ij}^*=u_{ij},~Tu^{\ot k} = u^{\ot l} T \text{ for all } k,l\in \NN_0,T\in \mathcal{H}(k,l)).\]
\end{theorem}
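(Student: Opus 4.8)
The statement is the classical Tannaka--Krein duality for compact matrix quantum groups due to Woronowicz \cite{Wo88}, so my plan is to indicate the strategy rather than redevelop the analytic machinery, treating the two directions of the correspondence separately. For direction (1), starting from a homogeneous orthogonal CMQG $(A,u,n)$, I would verify the five axioms of Definition \ref{def::tensor_cat} directly from the intertwiner relation $Tu^{\ot k}=u^{\ot l}T$. Closure under composition is immediate, and closure under tensor products follows by reading off the legs of $(\CC^n)^{\ot(k+k')}$ together with the multiplicativity of the corepresentations under $\Delta$. Closure under involution uses that the entries $u_{ij}$ are self-adjoint and $u$ is unitary, so that applying $*$ to the relation and conjugating by $u$ produces the relation for $T^*$. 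The identity lies in $\Homm_A(1,1)$ trivially, and the map $\zeta$ lies in $\Homm_A(0,2)$ precisely because the orthogonality relation $u^tu=uu^t=1$ unfolds to $\sum_k u_{ki}u_{kj}=\delta_{ij}$, which is exactly the intertwining condition for $\zeta$.

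For direction (2), I would form the universal unital C*-algebra $A_{\mathcal{H}}$ generated by self-adjoint elements $u_{ij}$ subject to $u^tu=uu^t=1$ and to $Tu^{\ot k}=u^{\ot l}T$ for every $T\in\mathcal{H}(k,l)$. These relations are compatible with the candidate comultiplication $u_{ij}\mapsto\sum_k u_{ik}\ot u_{kj}$, so the universal property yields a $*$-homomorphism $\Delta$ satisfying the CMQG axioms; orthogonality of $u$ is built into the relations, and the inclusion of $\zeta$ places the resulting object in the orthogonal class, the bookkeeping that lands it in the stated homogeneous orthogonal class being part of the framework of \cite{BS09,Wo88}. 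By construction one has $\mathcal{H}(k,l)\subseteq\Homm_{A_{\mathcal{H}}}(k,l)$ for all $k,l$.

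The hard part will be the reverse inclusion $\Homm_{A_{\mathcal{H}}}(k,l)\subseteq\mathcal{H}(k,l)$, i.e. showing that the construction creates no intertwiners beyond $\mathcal{H}$. This is the genuine content of Woronowicz's theorem and cannot be obtained by a formal manipulation of the defining relations; it rests on the existence and faithfulness of the Haar state on $A_{\mathcal{H}}$, semisimplicity of the corepresentation category, and a reconstruction argument identifying the morphism spaces between reconstructed corepresentations with exactly the imposed relations. I would invoke \cite{Wo88} at this point rather than reprove it.

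Once faithfulness is in hand, the two assignments are mutually inverse, which gives the bijection. The correspondence is inclusion-inverting because enlarging the category from $\mathcal{H}_1\subseteq\mathcal{H}_2$ imposes additional relations in the universal construction, hence realises $A_{\mathcal{H}_2}$ as a quotient of $A_{\mathcal{H}_1}$ and so the associated quantum group as a compact matrix quantum subgroup; dually, a surjection $A\twoheadrightarrow B$ of CMQGs pushes every $A$-intertwiner forward to a $B$-intertwiner, forcing $\Homm_A(k,l)\subseteq\Homm_B(k,l)$ for all $k,l$.
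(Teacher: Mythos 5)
The paper does not prove this theorem at all: it is stated as classical background, with its entire content deferred to the citation \cite{Wo88} (in the formulation used by \cite{BS09}). Your proposal is consistent with that treatment and correct in outline—your verifications in direction (1), the universal C*-algebra construction and inclusion-reversal argument in direction (2) are the routine parts, and you rightly identify the faithfulness/reconstruction step as the genuine content and defer it to \cite{Wo88}, exactly as the paper implicitly does.
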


\begin{remark} \label{rem::tannaka_krein}
By an easy calculation we see that 
\begin{align*}
&C^*(u_{ij}, 1\leq i,j\leq n \mid u_{ij}^*=u_{ij},~Tu^{\ot k} = u^{\ot l} T \text{ for all } k,l\in \NN_0,T\in \mathcal{H}(k,l))\\
&= C^*(u_{ij}, 1\leq i,j\leq n \mid u_{ij}^*=u_{ij},~Tu^{\ot k} = T \text{ for all } k\in \NN_0,T\in \mathcal{H}(k,0))
\end{align*}
for any tensor category with duals $\mathcal{H}(k,l) \subseteq \Homm((\CC^n)^{\ot k}, (\CC^n)^{\ot l})$, $k,l\in \NN_0$.
\end{remark}

Based on this result, Banica and Speicher defined (orthogonal) easy quantum groups in 2009 as follows. 

\begin{definition}[Strongly symmetric semigroup]
The \emph{strongly symmetric semigroup} $\sSym_n$ is the semigroup of all maps $\{\phi:\underline{n} \to \underline{n}\}$ and we define the \emph{strongly symmetric semigroup (on countably, but infinitely many points)} $\sSym_{\infty}$ as
\[ \sSym_{\infty}:= \{ \phi: \NN \to \NN \mid |\{n\in \NN \mid \phi(n)\neq n\}|<\infty \} .\] 
\end{definition}

Note that similarly to the symmetric group the strongly symmetric semigroup $\sSym_{\infty}$, respectively $\sSym_n$, acts componentwise on multi-indices in $\NN^k \times \NN^l$, respectively $\underline{n}^k \times \underline{n}^l$, for all $k,l\in \NN_0$.

\begin{definition}{\cite[Def. 1.6,1.7]{BS09}} \label{def::T_p}
For all $n\in \NN$ and $p\in P(k,l)$ we define a linear map $T^{(n)}_p \in \Hom((\CC^n)^{\ot k},(\CC^n)^{\ot l})$ via
\begin{align*}
&T^{(n)}_p (e_{i_1} \ot \cdots \ot e_{i_k}) = \sum_{\textbf{j}\in \underline{n}^l}  \delta_p(\textbf{i},\textbf{j}) ~ e_{j_1} \ot \cdots \ot e_{j_l} \\
&\text{with } \delta_p := \mathbb{1}_{\sSym_n(\ind(p))} 
\end{align*}  
for all $\textbf{i}=(i_1,\ldots,i_k) \in \underline{n}^k$.
\end{definition} 

Banica and Speicher showed that $\lspan \{T_p^{(n)} \mid p\in \cC(k,l)\},k,l\in \NN_0$ is a tensor category with duals for all categories of partitions $\cC$ which leads to the following definition.

\begin{definition}[(Orthogonal) easy quantum group {\cite[Def. 3.5]{BS09}}]
A compact matrix quantum group $(A,u,n)$ is called \emph{(orthogonal) easy quantum group} if there exists a category of partitions $\cC \subseteq P$ such that for all $k,l\in \NN_0$
\[ \Homm_{A} (k,l) = \text{span} \{ T^{(n)}_p \mid p\in \cC (k,l) \} ~.\]
\end{definition}

\subsection{Group-theoretical quantum groups}
In 2015, Raum and Weber introduced group-theoretical quantum groups. We roughly summarise their results.

\begin{definition}[Group-theoretical quantum group {\cite{RW15}}]
A compact matrix quantum group $(A,u,n)$ is called \emph{group-theoretical} if $u_{ij}^2$ are central projections in $A$ for all $i,j\in \underline{n}$.
\end{definition}

\begin{remark}
Let $(A,u,n)$ be a compact matrix quantum group and $p:=\primarypart$. Then $(A,u,n)$ is a group-theoretical quantum group if and only if $T^{(n)}_p \in \Homm_A(3,3)$. This holds if and only if $\widehat{T}^{(n)}_p \in \Homm_A(3,3)$.
\end{remark}

\begin{proof}
It is an easy calculation to show that $(A,u,n)$ is group-theoretical if and only if $T^{(n)}_p \in \Homm_A(3,3)$ (see \cite[Prop.2.5]{RW15}). \\
Consider the partition $e_{(3,3)}=\ke((1,1,1),(1,1,1)) \in P(3,3)$. 
It is easy to check  that $\widehat{T}^{(n)}_p=T^{(n)}_p-\widehat{T}^{(n)}_{e_{(3,3)}}$ and $\widehat{T}^{(n)}_{e_{(3,3)}}=T^{(n)}_{e_{(3,3)}}$ (see also Lem. \ref{lemma::hatp_relation}). Since $e_{(3,3)}=pp$, we have $T^{(n)}_{e_{(3,3)}}= T^{(n)}_p \circ T^{(n)}_p$ and thus $T^{(n)}_p \in \Homm_A(3,3)$ implies $\widehat{T}^{(n)}_p \in \Homm_A(3,3)$.\\
Now, let $\widehat{T}^{(n)}_p \in \Homm_A(3,3)$. It suffices to show that $e_{(3,3)} \in \langle p \rangle_{skew}$ since then Lemma \ref{lem::T_p_and_skew_opertions} implies $\widehat{T}^{(n)}_{e_{(3,3)}} \in \Homm_A(3,3)$ and hence $T^{(n)}_p \in \Homm_A(3,3)$. As $p \in \langle p \rangle_{skew}$ and $\langle p \rangle_{skew}$ is closed under rotation we have

\begin{minipage}[t]{.38\linewidth}
\vspace{0pt}
\begin{tikzpicture}
\coordinate [label=right:{$=pp^*\in \langle p \rangle_{skew}$ and}](O2) at (1.2,0);
\coordinate (A1) at (0,-0.5);
\coordinate (A2) at (0.5,-0.5);
\coordinate (A3) at (1,-0.5);
\coordinate (A4) at (0,0.5);
\coordinate (A5) at (0.5,0.5);
\coordinate (A6) at (1,0.5);

\coordinate (C1) at (0.5,0.2);
\coordinate (C2) at (0.5,-0.2);
\coordinate (C3) at (1,0.2);
\coordinate (C4) at (1,-0.2);
\coordinate (C5) at (0.75,0.2);
\coordinate (C6) at (0.75,-0.2);

\fill (A1) circle (2.5pt);
\fill (A2) circle (2.5pt);
\fill (A3) circle (2.5pt);
\fill (A4) circle (2.5pt);
\fill (A5) circle (2.5pt);
\fill (A6) circle (2.5pt);

\draw (A1) -- (A4);
\draw (A5) -- (C1) -- (C3) -- (A6);
\draw (A2) -- (C2) -- (C4) -- (A3);
\draw (C5) -- (C6);
\end{tikzpicture}
\end{minipage} 
\begin{minipage}[t]{.45\linewidth}
\vspace{0pt}
\begin{tikzpicture}
\coordinate [label=left:{$q:=$}](O) at (-0.2,0);
\coordinate [label=right:{$\in \langle p \rangle_{skew}.$}](O2) at (1.2,0);
\coordinate (A1) at (0,-0.5);
\coordinate (A2) at (0.5,-0.5);
\coordinate (A3) at (1,-0.5);
\coordinate (A4) at (0,0.5);
\coordinate (A5) at (0.5,0.5);
\coordinate (A6) at (1,0.5);

\coordinate (C1) at (0,0.2);
\coordinate (C2) at (0.5,0.2);
\coordinate (C3) at (1,0.2);
\coordinate (C4) at (0,-0.2);
\coordinate (C5) at (0.5,-0.2);

\fill (A1) circle (2.5pt);
\fill (A2) circle (2.5pt);
\fill (A3) circle (2.5pt);
\fill (A4) circle (2.5pt);
\fill (A5) circle (2.5pt);
\fill (A6) circle (2.5pt);

\draw (A4) -- (C1) -- (C3) -- (A6);
\draw (A5) -- (C2);
\draw (A6) -- (A3);
\draw (A1) -- (C4) -- (C5) -- (A2);
\end{tikzpicture}
\end{minipage}
\ \\
\ \\
Since $\langle p \rangle_{skew}$ is closed under conditioned composition it follows that $e_{(3,3)}=qq^* \in \langle p \rangle_{skew}$.
\end{proof} 

\begin{lemma} \label{lem::u_iju_ik=0}
Let $(A,u,n)$ be a group-theoretical quantum group. Then we have 
\[u_{ij}^3=u_{ij} \text{ and } u_{ij}u_{ik}=0=u_{ji}u_{ki} \text{ for all } i,j,k\in \underline{n} \text{ with } j\neq k. \]
Hence for all multi-indices $(i_1,\ldots,i_k),(j_1,\ldots,j_k) \in \underline{n}^k$ with $i_x = i_y$ and $j_x\neq j_y$ for some $x,y\in \underline{k}$ we have $u_{i_1j_1}\cdot \ldots \cdot u_{i_kj_k}=0$. In particular, we have $u_{i_1j_1}\cdot \ldots \cdot u_{i_kj_k}=0$ if $\textbf{i}\in \ind(p)$ and $\textbf{j}\notin \ind(p)$ for some partition $p\in P(k,0)$. 
\end{lemma}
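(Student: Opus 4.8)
The plan is to establish the three assertions of Lemma \ref{lem::u_iju_ik=0} in sequence, deriving each from the defining property that $u_{ij}^2$ is a central projection together with the relations coming from $(A,u,n)$ being an orthogonal CMQG (so $u_{ij}^*=u_{ij}$ and $u$ is unitary, i.e.\ $\sum_k u_{ik}u_{jk}=\delta_{ij}$ and $\sum_k u_{ki}u_{kj}=\delta_{ij}$). First I would prove $u_{ij}^3=u_{ij}$: since $u_{ij}^2$ is a projection we have $(u_{ij}^2)^2=u_{ij}^2$, that is $u_{ij}^4=u_{ij}^2$; because $u_{ij}=u_{ij}^*$ is self-adjoint, $u_{ij}$ is a normal element whose spectrum therefore satisfies $\lambda^4=\lambda^2$, forcing $\lambda\in\{-1,0,1\}$, and hence $\lambda^3=\lambda$ on the spectrum, which gives $u_{ij}^3=u_{ij}$ by continuous functional calculus. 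Alternatively one can argue purely algebraically: multiply $u_{ij}^4=u_{ij}^2$ and use that $u_{ij}^2$ is central to cancel appropriately.

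Next I would establish the orthogonality relations $u_{ij}u_{ik}=0$ and $u_{ji}u_{ki}=0$ for $j\neq k$. The key computation uses the unitarity of $u$ to write $\sum_{k} u_{ik}^2 = 1$ (the diagonal of $uu^t$, using $u_{ij}^*=u_{ij}$), so the family $\{u_{ik}^2\}_k$ is a family of central projections summing to the identity; since central projections that sum to $1$ are pairwise orthogonal, we get $u_{ij}^2 u_{ik}^2=0$ for $j\neq k$. To upgrade this to $u_{ij}u_{ik}=0$ I would use centrality of $u_{ik}^2$ together with $u_{ij}^3=u_{ij}$: writing $u_{ij}u_{ik}=u_{ij}^3 u_{ik}^3 \cdot(\text{something})$ or more cleanly multiplying $u_{ij}u_{ik}$ on suitable sides by the orthogonal central projections and invoking $u_{ij}=u_{ij}u_{ij}^2$, one reduces the product to a multiple of $u_{ij}^2u_{ik}^2=0$. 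The analogous relation $u_{ji}u_{ki}=0$ follows symmetrically from $u^tu=1$. This step is where I expect the main obstacle to lie, namely in cleanly passing from the vanishing of the \emph{squares} $u_{ij}^2u_{ik}^2$ to the vanishing of the \emph{linear} products $u_{ij}u_{ik}$; the trick is to exploit centrality of all the squares so that $u_{ij}$ and $u_{ik}$ can be freely factored as $u_{ij}=u_{ij}\cdot u_{ij}^2\cdot u_{ij}^{-1}$-type expressions are avoided, and instead use $u_{ij}=u_{ij}^3=u_{ij}(u_{ij}^2)$ to insert a square that can then be paired against $u_{ik}^2$.

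The two displayed consequences then follow formally. For a product $u_{i_1j_1}\cdots u_{i_kj_k}$ with $i_x=i_y$ but $j_x\neq j_y$, I would use centrality of the squares to commute the factors so that $u_{i_xj_x}$ and $u_{i_yj_y}$ become adjacent (up to central multipliers), and then apply $u_{i_xj_x}u_{i_yj_y}=u_{ij}u_{ik}=0$ from the previous step, whence the whole product vanishes. Finally, the ``in particular'' statement is just a reformulation in the language of Definition \ref{def::ker_ind}: if $\textbf{i}\in\ind(p)$ but $\textbf{j}\notin\ind(p)$ for some $p\in P(k,0)$, then the kernel partition of $\textbf{j}$ does not refine that of $\textbf{i}$ in the required way, meaning precisely that there exist positions $x,y$ with $i_x=i_y$ yet $j_x\neq j_y$, so the hypothesis of the general consequence is met and the product vanishes. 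I would keep this last paragraph short, as it is purely a matching of indices to the combinatorial definition and requires no new algebra.
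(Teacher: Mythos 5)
Your algebraic core is correct, but note that it reaches the two product relations by a different route than the paper. The paper proves $u_{ij}u_{ik}=0$ \emph{first}, via the C*-identity $\|a\|^2=\|aa^*\|$: using centrality of the squares, $(u_{ij}u_{ik})(u_{ij}u_{ik})^*=u_{ij}u_{ik}^2u_{ij}=u_{ij}^2u_{ik}^2=0$, and only afterwards deduces $u_{ij}^3=u_{ij}\sum_k u_{ik}^2=u_{ij}$. You reverse the order: $u_{ij}^3=u_{ij}$ first by functional calculus (using only self-adjointness and $u_{ij}^4=u_{ij}^2$, no centrality needed), then $u_{ij}u_{ik}=u_{ij}u_{ij}^2\,u_{ik}u_{ik}^2=u_{ij}u_{ik}\,(u_{ij}^2u_{ik}^2)=0$ by commuting the central squares. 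Both are valid; your version trades the norm trick for a spectral argument and makes the orthogonality step purely algebraic once the cube identity is in hand. The step you flagged as the likely "main obstacle" in fact goes through exactly as you sketched it.

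The genuine gap is in your final paragraph. It is \emph{not} true that ``$\textbf{i}\in\ind(p)$ and $\textbf{j}\notin\ind(p)$'' means precisely that there exist $x,y$ with $i_x=i_y$ and $j_x\neq j_y$. The condition $\ke(\textbf{j})\neq\ke(\textbf{i})$ splits into two cases: (a) some block of $\ke(\textbf{i})$ is broken in $\textbf{j}$, which is your case; or (b) every block of $\ke(\textbf{i})$ stays constant in $\textbf{j}$ but two distinct blocks merge, i.e.\ there exist $x,y$ with $i_x\neq i_y$ and $j_x=j_y$. A concrete instance of (b): $\textbf{i}=(1,2)$, $\textbf{j}=(1,1)$, where the product is $u_{11}u_{21}$; your displayed consequence does not apply, and the vanishing comes from the \emph{column} relation $u_{ji}u_{ki}=0$, not the row relation. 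To close the argument you must also state and prove the symmetric version of the displayed consequence (for $j_x=j_y$, $i_x\neq i_y$, using $u_{ji}u_{ki}=0$ together with the same insertion of central squares) and then split the ``in particular'' claim into cases (a) and (b). The fact that your proposal never invokes the relations $u_{ji}u_{ki}=0$, which the lemma explicitly records, is precisely the symptom of this missing case; the repair is immediate, but as written the last step does not cover all multi-indices with $\textbf{j}\notin\ind(p)$.
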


\begin{proof}
Since the elements $q_{ij}:=u_{ij}^2$ are projections with $\sum_{j=1}^n q_{ij} = 1$ we have $q_{ij}q_{ik}=0$ and $q_{ji}q_{ki}=0$ for $j\neq k$. Hence for $j\neq k$ it follows that $(u_{ij}u_{ik})(u_{ij}u_{ik})^* =q_{ij}q_{ik}=0$. Thus $\|u_{ij}u_{ik}\|^2=0$ and we have $u_{ij}u_{ik}=0$ for $j\neq k$. Analogously, we obtain $u_{ji}u_{ki}=0$. This implies 
\[ u_{ij} = u_{ij} \sum_{k=1}^n u_{ik}^2 = u_{ij}^3.\]
Using the results above the second claim is easy to check.
\end{proof}

Raum and Weber showed that group-theoretical quantum groups have a presentation as a semi-direct product quantum group (for more details see \cite[§ 2.5]{RW15}).

\begin{theorem}{\cite[Thm. 3.1]{RW15}} \label{thm::raumweber_thm3.1.}
Let $n\in \NN$ and let $N\unlhd \ZZ_2^{*n}$ be an $\Sym_n$-invariant normal subgroup. Then the semi-direct product quantum group given by $C^*(\ZZ_2^{*n}/N)\Join C(\Sym_n)$ is a group-theoretical quantum group.\\
Vice versa, let $(A,u,n)$ be a group-theoretical quantum group. Then there exists an $\Sym_n$-invariant normal subgroup $N\unlhd \ZZ_2^{*n}$ such that $A \cong C^*(\ZZ_2^{*n}/N) \Join C(\Sym_n)$.
\end{theorem}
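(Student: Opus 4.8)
The plan is to prove the two implications separately, taking the explicit semi-direct product construction and the form of its fundamental corepresentation for granted (see \cite[§ 2.5]{RW15}). In $C^*(\ZZ_2^{*n}/N)\Join C(\Sym_n)$ the corepresentation factorises as $u_{ij}=w_iv_{ij}$, where the $w_i$ are the canonical self-adjoint unitaries attached to the generators of $\ZZ_2^{*n}/N$ (so $w_i^*=w_i$ and $w_i^2=1$) and the $v_{ij}$ generate, as a magic unitary, a \emph{central} copy of $C(\Sym_n)$. For the first implication I would then simply compute: since $v_{ij}$ is central it commutes with $w_i$, so $u_{ij}^2=w_iv_{ij}w_iv_{ij}=w_i^2v_{ij}^2=v_{ij}$, and as $v_{ij}$ is a central projection this shows $(A,u,n)$ is group-theoretical by definition. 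The only nontrivial input here is that the construction really yields a CMQG, which is part of the cited definition.

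For the converse let $(A,u,n)$ be group-theoretical (in its maximal version). I would first isolate the $C(\Sym_n)$ part. Put $q_{ij}:=u_{ij}^2$; these are central projections, and from the orthogonality and self-adjointness of $u$ together with Lemma \ref{lem::u_iju_ik=0} one gets $\sum_j q_{ij}=\sum_i q_{ij}=1$ and $q_{ij}q_{ik}=q_{ji}q_{ki}=0$ for $j\neq k$, so $(q_{ij})$ is a commutative magic unitary. Using Lemma \ref{lem::u_iju_ik=0} to kill the cross terms in $\Delta(u_{ij})^2$ gives $\Delta(q_{ij})=\sum_k q_{ik}\ot q_{kj}$, so $B:=C^*(q_{ij})$ is a central sub-Hopf-$*$-algebra; being commutative and generated by a magic unitary it is a quotient of $C(\Sym_n)$. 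The homogeneity surjection $\pi:A\twoheadrightarrow C(\Sym_n)$ sends $q_{ij}$ to the permutation coordinate $v_{ij}$, and the composite $C(\Sym_n)\twoheadrightarrow B\xrightarrow{\pi}C(\Sym_n)$ is the identity on generators; hence $B\cong C(\Sym_n)$ embeds centrally in $A$.

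Next I would extract the group. Set $w_i:=\sum_j u_{ij}$; Lemma \ref{lem::u_iju_ik=0} gives $w_i^*=w_i$, $w_i^2=\sum_j q_{ij}=1$ and $u_{ij}=w_iq_{ij}$ (via $u_{ij}^3=u_{ij}$), so $A$ is generated by the self-adjoint unitaries $w_1,\dots,w_n$ together with the central subalgebra $B\cong C(\Sym_n)$. Let $N$ be the kernel of the homomorphism $\ZZ_2^{*n}=\langle a_1,\dots,a_n\rangle\to A$, $a_i\mapsto w_i$, so $\langle w_1,\dots,w_n\rangle=\ZZ_2^{*n}/N$. To see that $N$ is $\Sym_n$-invariant I would use the coaction $\gamma:=(\pi\ot\mathrm{id})\circ\Delta:A\to C(\Sym_n)\ot A$, which satisfies $\gamma(w_i)=\sum_k v_{ik}\ot w_k$; for each $\sigma\in\Sym_n$ the map $(\mathrm{ev}_\sigma\ot\mathrm{id})\circ\gamma$ is then a $*$-endomorphism of $A$ sending $w_i\mapsto w_{\sigma^{-1}(i)}$, so applying it to any relation $w_{i_1}\cdots w_{i_k}=1$ shows that $N$ is invariant under the permutation action.

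It remains to assemble the isomorphism $A\cong C^*(\ZZ_2^{*n}/N)\Join C(\Sym_n)$. The relations collected above --- self-adjoint unitaries $w_i$ subject to $N$, the central magic unitary generating $C(\Sym_n)$, and the covariance encoded by $\gamma$ --- are exactly the defining relations of the semi-direct product, which gives a surjection $C^*(\ZZ_2^{*n}/N)\Join C(\Sym_n)\twoheadrightarrow A$. The main obstacle is the reverse inclusion: one must rule out any further relations in $A$. Here I would use that $C(\Sym_n)=\bigoplus_\sigma\CC p_\sigma$ is finite-dimensional and central, decompose $A=\bigoplus_\sigma Ap_\sigma$ into corners each generated by the self-adjoint unitaries $w_ip_\sigma$ obeying precisely the relations of $\ZZ_2^{*n}/N$, and invoke the maximality of $A$ (equivalently the Tannaka-Krein correspondence, Theorem \ref{thm::tannaka_krein}) to identify each corner with the full group C*-algebra $C^*(\ZZ_2^{*n}/N)$ rather than a proper quotient. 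Controlling this universal property is the delicate step, and is exactly where the standing assumption that $A$ is taken in its maximal version enters.
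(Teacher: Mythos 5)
You should first note that the paper itself contains no proof of this statement: it is imported verbatim from \cite[Thm.~3.1]{RW15} (only Remark \ref{rem::thm3.1.} records a by-product of that proof), so your attempt can only be measured against Raum--Weber's original argument. Its overall skeleton you reproduce correctly: the forward computation $u_{ij}^2=w_i v_{ij}w_iv_{ij}=v_{ij}$ is right, and in the converse the facts that the $q_{ij}:=u_{ij}^2$ generate a central copy $B\cong C(\Sym_n)$, that the $w_i:=\sum_j u_{ij}$ are self-adjoint unitaries with $u_{ij}=w_iq_{ij}$, and that $N:=\ke\bigl(\ZZ_2^{*n}\to A,\ a_i\mapsto w_i\bigr)$ is a normal, $\Sym_n$-invariant subgroup (your coaction argument via $(\pi\ot\mathrm{id})\circ\Delta$ is clean) are all correct, granted the standing homogeneity and maximality assumptions of the paper.

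The genuine gap is the final step, the injectivity of your surjection $\Phi:C^*(\ZZ_2^{*n}/N)\Join C(\Sym_n)\twoheadrightarrow A$, which you label ``delicate'' but do not carry out, and the two tools you gesture at do not close it as stated. First, the claim that each corner $Ap_\sigma$ is generated by unitaries ``obeying precisely the relations of $\ZZ_2^{*n}/N$'' is itself the assertion to be proven: $N$ was defined by the \emph{global} relations $w_{i_1}\cdots w_{i_k}=1$, and a priori a word outside $N$ could act as the identity (or as a unimodular scalar) on a single corner; excluding this requires an actual computation with the comultiplication (e.g.\ using $\Delta(p_e)=\sum_\tau p_\tau\ot p_{\tau^{-1}}$ to propagate a relation in one corner to all corners and to control the resulting scalars), which is precisely where the work in \cite{RW15} lies. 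Second, ``invoking maximality / Tannaka--Krein'' to identify the corners with the \emph{full} group C*-algebra is circular here: the universal property of the maximal version of $A$ produces an inverse map $A\to C^*(\ZZ_2^{*n}/N)\Join C(\Sym_n)$ only once one knows $\Homm_A(k,l)\subseteq\Homm_D(k,l)$ for the semi-direct product $D$, i.e.\ only after the two intertwiner categories have been compared --- and determining these intertwiner spaces is exactly what the present paper does later (Lemma \ref{lemma::hatp_relation}, Theorem \ref{ThMain3}) \emph{using} the theorem you are trying to prove, so you cannot appeal to it. As written, your proposal is a correct reconstruction of the setup with the decisive step missing.
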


\begin{remark} \label{rem::thm3.1.}
The proof of this theorem by Raum and Weber also shows the following. Let $n\in \NN$ and let $N\unlhd \ZZ_2^{*n}$ be an $\Sym_n$-invariant normal subgroup. We define $A:=C^*(\ZZ_2^{*n}/N) \Join C(\Sym_n)$.
Then we have 
\begin{align*}
A \cong C^*(u_{ij}, 1\leq i,j\leq n\mid &u_{ij}^*=u_{ij}, u_{ij}^2 \text{ central projections},\\
&\sum_{\textbf{f}=(f_1,\ldots,f_k) \in \underline{n}^k} u_{f_1i_1}\cdot \ldots \cdot u_{f_ki_k}=1 \\
&\text{ for all } k\in \NN_0, \textbf{i}\in \underline{n}^k, a_{\textbf{i}}\in N ) .
\end{align*}
\end{remark} 

Raum and Weber also classified the easy case. Note that similarly to the symmetric group the strongly symmetric semigroup $\sSym_{\infty}$, respectively $\sSym_n$, acts on words in $\ZZ_2^{\infty}$, receptively $\ZZ_2^n$, via $\phi(a_i) = a_{\phi(i)}$ (see Def. \ref{def::Finfty}).

\begin{theorem}{\cite[Thm. 4.4,4.5]{RW15}} \label{thm::raumweber_thm4.4.}
Let $(A,u,n)$ be a group-theo\-retical quantum group with corresponding normal subgroup $N$. Then $(A,u,n)$ is easy if and only if $N$ is $\sSym_n$-invariant. In this case, the corresponding category of partitions is given by
\[ \cC = \langle \ke(\textbf{i}) \mid a_{\textbf{i}} \in N \rangle .\]
\end{theorem}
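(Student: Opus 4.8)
The plan is to compare the two possible descriptions of $\Hom_A(k,l)$ — the one forced by the group-theoretical structure of $A$ (in terms of the maps $\widehat{T}^{(n)}_p$) and the easy one (in terms of the maps $T^{(n)}_p$) — and to show these can agree precisely when $N$ is $\sSym_n$-invariant. The key bookkeeping device is the change of basis between the two families. Writing $q\geq p$ when $q$ is a coarsening of $p$ (obtained by merging blocks), one checks from $\delta_p=\mathbb 1_{\sSym_n(\ind(p))}$ and $\widehat\delta_p=\mathbb 1_{\Sym_n(\ind(p))}$ that $\sSym_n(\ind(p))=\bigsqcup_{q\geq p}\Sym_n(\ind(q))$, whence
\[ T_p^{(n)}=\sum_{q\geq p}\widehat T_q^{(n)}, \qquad \widehat T_p^{(n)}=\sum_{q\geq p}\mu(p,q)\,T_q^{(n)}, \]
the second identity by Möbius inversion over the partition lattice (this is Lemma \ref{lemma::hatp_relation}). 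Thus on $P_n(k,l)$ the families $\{T_p^{(n)}\}$ and $\{\widehat T_p^{(n)}\}$ are two bases of the same space related by a change of basis that is unitriangular with respect to the coarsening order.

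Next I would pin down $\Hom_A(k,l)$ intrinsically. Using the presentation of $A$ from Remark \ref{rem::thm3.1.} together with Lemma \ref{lem::u_iju_ik=0} (which forces $u_{i_1j_1}\cdots u_{i_kj_k}=0$ unless $\ke(\textbf i)=\ke(\textbf j)$), a direct computation of the intertwiner condition $\widehat T^{(n)}_{\ke(\textbf i)}u^{\ot k}=\widehat T^{(n)}_{\ke(\textbf i)}$ shows that the defining relations of $A$ are \emph{exactly} $\widehat T^{(n)}_{\ke(\textbf i)}\in\Hom_A(k,0)$ for all $a_{\textbf i}\in N$ (the $\Sym_n$-invariance of $N$ being used to pass from the single index $\textbf i$ to all of $\Sym_n(\ind(\ke(\textbf i)))$, and the case corresponding to $\primarypart$ encoding centrality of the $u_{ij}^2$). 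By Tannaka--Krein (Theorem \ref{thm::tannaka_krein}, Remark \ref{rem::tannaka_krein}) and Theorem \ref{ThMain2}, it follows that $\Hom_A(k,l)=\lspan\{\widehat T_p^{(n)}\mid p\in\cR_n(k,l)\}$, where $\cR=\langle\ke(\textbf i)\mid a_{\textbf i}\in N\rangle_{skew}$ is the skew category with $F_n(\cR)=N$ (Corollary \ref{cor::R_n}).

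The core step is then a linear-algebraic observation fed through the combinatorial dictionary. For $S\subseteq P_n(k,l)$, the subspace $\lspan\{\widehat T_p^{(n)}\mid p\in S\}$ is a coordinate subspace in the basis $\{T_q^{(n)}\}$ — i.e. equals $\lspan\{T_q^{(n)}\mid q\in S'\}$ for some $S'$ — if and only if $S$ is closed under coarsening, in which case $S'=S$; this is immediate from the unitriangularity above and the linear independence in Lemma \ref{lem::lin_independent}. Taking $S=\cR_n(k,l)$, the quantum group $A$ is easy if and only if $\cR_n(k,l)$ is coarsening-closed for all $k,l$. I would then translate coarsening into the group: coarsening $\ke(\textbf i)$ identifies some letters $a_{i_x}$, which is precisely applying a non-injective map of $\sSym_n$ to $a_{\textbf i}$; since $F_n(\cR)=N$ and $\cR_n$ consists of rotations of such $\ke(\textbf i)$ (Corollary \ref{cor::R_n}), the set $\cR_n$ is coarsening-closed if and only if $N$ is $\sSym_n$-invariant. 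In that case $\lspan\{\widehat T_p^{(n)}\mid p\in\cR_n\}=\lspan\{T_p^{(n)}\mid p\in\cR_n\}$, and identifying $\cR_n$ with the $n$-truncation of the ordinary category $\langle\ke(\textbf i)\mid a_{\textbf i}\in N\rangle$ gives the stated $\cC$.

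I expect the main obstacle to lie in the last identification rather than in the abstract dichotomy. The unitriangular change of basis settles ``easy versus non-easy'' cleanly, and matching coarsening-closure with $\sSym_n$-invariance is essentially the observation that merging blocks corresponds to identifying generators. What requires care is verifying that, under $\sSym_n$-invariance, the \emph{skew} category $\cR$ and the \emph{ordinary} category $\cC=\langle\ke(\textbf i)\mid a_{\textbf i}\in N\rangle$ genuinely have the same $n$-block truncation: the connected tensor products and conditioned compositions generating $\cR$ are not the tensor products and compositions generating $\cC$, and the two generate the same family only once coarsening-closure (equivalently $\sSym_n$-invariance, equivalently $F_n(\cC)=N$ rather than its $\sSym_n$-closure) is in force. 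Establishing that both truncations equal $\{\text{rotations of }\ke(\textbf i)\mid a_{\textbf i}\in N\cap\ZZ_2^{*n}\}$ is the technical heart of the proof.
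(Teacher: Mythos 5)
Your proposal is correct in outline, but it is not the paper's route: the paper never reproves this theorem at all. It imports it from \cite{RW15} and only repairs, in the appendix, the one broken step of the original RW15 argument (their set $\{p \mid p \text{ a rotation of } \ke(\textbf{i}),\, a_{\textbf{i}}\in N\}$ fails to be closed under tensor products, because tensor products can create more than $n$ blocks; the fix passes to $N_{\infty}=\langle\langle \sSym_{\infty}(N)\rangle\rangle \unlhd \ZZ_2^{*\infty}$). Your argument instead runs entirely inside this paper's new machinery: $\Homm_A(k,l)=\lspan\{\widehat{T}^{(n)}_p \mid p\in \cR_n(k,l)\}$ with $F_n(\cR)=N$ (Theorem \ref{ThMain3}, Corollary \ref{cor::R_n}), the unitriangular change of basis between $\{T^{(n)}_p\}$ and $\{\widehat{T}^{(n)}_p\}$ over the coarsening order, the ``coordinate subspace iff coarsening-closed'' criterion, and the dictionary between joining blocks and applying non-injective maps of $\sSym_n$. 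This is a genuinely different and appealing route: it isolates exactly where $\sSym_n$-invariance enters (closure of $\cR_n$ under joining blocks) and settles the easy/non-easy dichotomy by linear algebra, whereas RW15 argue directly on the level of categories of partitions.

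Two points must still be supplied to make your sketch a proof. First, in the ``only if'' direction your coordinate-subspace criterion applies to subsets of $P_n$, but an easy quantum group's category $\cC$ lives in $P$ and may contain partitions with more than $n$ blocks, whose $T^{(n)}_p$ are not basis vectors; you should first replace $\cC$ by the maximal category $\{p \mid T^{(n)}_p\in\Homm_A\}$, observe that it contains $\primarypart$ because $A$ is group-theoretical, hence is closed under joining blocks by Lemma \ref{lem::skew_cat}, and only then does $\lspan\{T^{(n)}_p \mid p\in\cC\}$ collapse to a span over a coarsening-closed subset of $P_n$. Second, the identification you flag as the technical heart --- that $\cC=\langle \ke(\textbf{i}) \mid a_{\textbf{i}} \in N\rangle$ satisfies $F_n(\cC)=N$ and has the same $n$-truncation as $\cR$ --- is precisely what the paper's appendix establishes via $N_{\infty}$, and it is exactly the point where RW15's proof originally failed; invoking the appendix theorem and its subsequent remark closes this gap, so your plan is completable. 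Finally, a small reference slip: the change-of-basis/M\"obius identity is Lemma \ref{lem::widehatT_T}, not Lemma \ref{lemma::hatp_relation} (the latter characterizes when $\widehat{T}^{(n)}_p$ is an intertwiner).
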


\begin{remark}
The statement of Theorem 4.4 in \cite{RW15} is correct but the proof contains a slight error. The category of partitions $\cC$ they constructed is not closed under taking tensor products. But this can be easily fixed (see the appendix of our article) and the set they constructed was still a generating set for the category of partitions we are looking for. 
\end{remark}

\subsection{The intertwiner spaces of group-theoretical \\
quantum groups}
We will now determine the intertwiner spaces of group-theoretical quantum groups in general. We begin with two preliminary lemmas.

\begin{lemma} \label{lemma::hatp_relation}
Let $(A,u,n)$ be a group-theoretical quantum group and $p\in P(k,0)$. Then $\widehat{T}^{(n)}_p \in \Homm_{A}(k,0)$ if and only if $p$ has more than $n$ blocks or
\[ \sum_{\textbf{f}=(f_1,\ldots,f_k) \in \underline{n}^k} u_{f_1i_1}\cdot \ldots \cdot u_{f_ki_k} = 1 \text{ for all } \textbf{i}=(i_1,\ldots,i_k) \in \Sym_n(\ind(p)) .\]
\end{lemma}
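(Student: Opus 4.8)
The plan is to unfold the definition of the intertwiner space and translate the condition $\widehat{T}^{(n)}_p \in \Homm_A(k,0)$ into a statement about products of the $u_{ij}$. Recall from the definition of intertwiner spaces that $\widehat{T}^{(n)}_p \in \Homm_A(k,0)$ means precisely $\widehat{T}^{(n)}_p\, u^{\ot k} = u^{\ot 0}\, \widehat{T}^{(n)}_p = \widehat{T}^{(n)}_p$, since $u^{\ot 0}$ acts as the identity on $\CC^n{}^{\ot 0}=\CC$. First I would handle the trivial case: if $p$ has more than $n$ blocks, then $\widehat{\delta}_p(\textbf{i},\emptyset)=0$ for all $\textbf{i}\in\underline{n}^k$ (as noted right after the definition of $\widehat{T}^{(n)}_p$), so $\widehat{T}^{(n)}_p=0$ and the intertwiner condition holds vacuously. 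So the substance is the case where $p$ has at most $n$ blocks.

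For the main case I would compute both sides of $\widehat{T}^{(n)}_p\, u^{\ot k} = \widehat{T}^{(n)}_p$ on a basis vector $e_{i_1}\ot\cdots\ot e_{i_k}$ and compare coefficients in $A$. Writing $\widehat{T}^{(n)}_p$ via $\widehat{\delta}_p = \mathbb{1}_{\Sym_{\infty}(\ind(p))}$, the map $\widehat{T}^{(n)}_p:(\CC^n)^{\ot k}\to\CC$ sends $e_{j_1}\ot\cdots\ot e_{j_k} \mapsto \widehat{\delta}_p(\textbf{j},\emptyset)$, which is $1$ exactly when $\textbf{j}\in\Sym_{\infty}(\ind(p))$, i.e.\ when $\textbf{j}=\ke^{-1}$ realises $p$. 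Applying $\widehat{T}^{(n)}_p$ after the corepresentation $u^{\ot k}$, the coefficient of the basis vector $e_{i_1}\ot\cdots\ot e_{i_k}$ becomes
\[
\sum_{\textbf{f}\in\underline{n}^k} \widehat{\delta}_p(\textbf{f},\emptyset)\, u_{f_1 i_1}\cdot\ldots\cdot u_{f_k i_k} = \sum_{\textbf{f}\in\Sym_n(\ind(p))} u_{f_1 i_1}\cdot\ldots\cdot u_{f_k i_k},
\]
where the sum over $\underline{n}^k$ collapses to a sum over $\Sym_n(\ind(p))$ because $\widehat{\delta}_p$ is supported there. The right-hand side $\widehat{T}^{(n)}_p$ contributes $\widehat{\delta}_p(\textbf{i},\emptyset)$, i.e.\ $1$ if $\textbf{i}\in\Sym_n(\ind(p))$ and $0$ otherwise. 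Thus the intertwiner equation is equivalent to demanding, for every $\textbf{i}\in\underline{n}^k$,
\[
\sum_{\textbf{f}\in\Sym_n(\ind(p))} u_{f_1 i_1}\cdot\ldots\cdot u_{f_k i_k} = \widehat{\delta}_p(\textbf{i},\emptyset)\cdot 1_A.
\]

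Then I would split the verification by cases on $\textbf{i}$. If $\textbf{i}\in\Sym_n(\ind(p))$, the equation reads $\sum_{\textbf{f}\in\Sym_n(\ind(p))} u_{f_1 i_1}\cdots u_{f_k i_k}=1$, which is exactly the displayed condition in the lemma. If $\textbf{i}\notin\Sym_n(\ind(p))$, I must show the left-hand sum vanishes automatically in any group-theoretical quantum group — and this is where Lemma \ref{lem::u_iju_ik=0} does the work. Since $\textbf{f}\in\Sym_n(\ind(p))$ means $\ke(\textbf{f})=p$, while $\textbf{i}\notin\Sym_n(\ind(p))$ means $\ke(\textbf{i})\neq p$, the two multi-indices have incompatible kernel (block) structures, so some pair of positions $x,y$ has $f_x=f_y$ while $i_x\neq i_y$ (or the symmetric failure), and the final sentence of Lemma \ref{lem::u_iju_ik=0} forces each product $u_{f_1 i_1}\cdots u_{f_k i_k}=0$. **The main obstacle** is making this kernel-mismatch argument clean: I need to check carefully that $\textbf{f}$ and $\textbf{i}$ having different kernel partitions always produces a coordinate pair of the exact type ($f_x=f_y$, $i_x\neq i_y$, reading positions as the lower/upper legs in the convention of Lemma \ref{lem::u_iju_ik=0}) that triggers the vanishing, rather than only the reverse inequality which would require the transposed relation $u_{ji}u_{ki}=0$. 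Since Lemma \ref{lem::u_iju_ik=0} supplies both $u_{ij}u_{ik}=0$ and $u_{ji}u_{ki}=0$, either direction of mismatch suffices, so assembling the two cases completes the equivalence.
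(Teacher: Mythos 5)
Your proposal is correct and takes essentially the same route as the paper: unfold the intertwiner equation $\widehat{T}^{(n)}_p u^{\ot k} = \widehat{T}^{(n)}_p$ coefficientwise, split into the cases $\textbf{i}\in\Sym_n(\ind(p))$ and $\textbf{i}\notin\Sym_n(\ind(p))$, and invoke Lemma \ref{lem::u_iju_ik=0} to kill the products with mismatched kernel structure. The only step you gloss over is that your case-one equation $\sum_{\textbf{f}\in\Sym_n(\ind(p))} u_{f_1i_1}\cdot\ldots\cdot u_{f_ki_k}=1$ is not literally the lemma's displayed condition (whose sum runs over all $\textbf{f}\in\underline{n}^k$); identifying the two sums requires one more application of Lemma \ref{lem::u_iju_ik=0} -- this time with $\textbf{i}$ inside and $\textbf{f}$ outside the kernel class of $p$, so that the extra terms vanish -- which is exactly the step the paper's proof makes explicit.
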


\begin{proof}
If $p$ has more than $n$ blocks it follows that $\widehat{T}^{(n)}_p=0 \in \Homm_{A}(k,0)$. So assume that $p$ has $n$ or less blocks.
It is easy to check that $\widehat{T}^{(n)}_p \in \Homm_{A}(k,0)$ if and only if 
\[ \sum_{\textbf{f}=(f_1,\ldots,f_k) \in \underline{n}^k} \widehat{\delta}_p(\textbf{f},0) u_{f_1i_1}\cdot \ldots \cdot u_{f_ki_k} = \widehat{\delta}_p(\textbf{i},0) \text{ for all } \textbf{i}=(i_1,\ldots,i_k) \in \underline{n}^k.\]
For $\textbf{i}=(i_1,\ldots,i_k)\in \Sym_n(\ind(p))$ we have $\widehat{\delta}_p(\textbf{i},0)=1$ and
\[ \sum_{\textbf{f} \in \underline{n}^k} \widehat{\delta}_p(\textbf{f},0)~ u_{f_1i_1}\cdot \ldots \cdot u_{f_ki_k} 
=\sum_{\textbf{f} \in \Sym_n(\ind (p))} u_{f_1i_1}\cdot \ldots \cdot u_{f_ki_k} 
\overset{\ref{lem::u_iju_ik=0}}{=}\sum_{\textbf{f} \in \underline{n}^k} u_{f_1i_1}\cdot \ldots \cdot u_{f_ki_k}.\]
For $\textbf{i}=(i_1,\ldots,i_k)\notin \Sym_n(\ind(p))$ we have $\widehat{\delta}_p(\textbf{i},0)=0$ and
\[ \sum_{\textbf{f} \in \underline{n}^k} \widehat{\delta}_p(\textbf{f},0)~ u_{f_1i_1}\cdot \ldots \cdot u_{f_ki_k} 
=\sum_{\textbf{f} \in \Sym_n(\ind (p))} u_{f_1i_1}\cdot \ldots \cdot u_{f_ki_k} 
\overset{\ref{lem::u_iju_ik=0}}{=}0.\]
\end{proof}

The next lemma describes explicitly the skew categories that we will use in the following. For this purpose we have to lift normal subgroups $N\unlhd \ZZ_2^{*n}$ to normal subgroups in $\ZZ_2^{*\infty}$.
\begin{definition} \label{def::Ninfty}
For any $\Sym_n$-invariant normal subgroup $N\unlhd \ZZ_2^{*n}$ we define $\Sym_{\infty}(N):= \{ \sigma (g) \mid \sigma \in \Sym_{\infty}, g\in N \}$ and denote by $N_{\infty}:=\langle \langle \Sym_{\infty}(N) \rangle \rangle_{\ZZ_2^{*\infty}}$ the closure of $\Sym_{\infty}(N)$ as normal subgroup in $\ZZ_2^{*\infty}$.
\end{definition}

\begin{lemma} \label{cor::of_main_3} 
Let $N\unlhd \ZZ_2^{*n}$ be an $\Sym_n$-invariant normal subgroup. Then we have
\begin{align*}
\cR:=& \langle \ke(\textbf{i}) \mid a_{i_1}\cdot \ldots \cdot a_{i_k} \in N \rangle_{skew} \\
=& \{p\mid p \text{ is a rotation of  } \ke(\textbf{i}) \text{ for some } a_{\textbf{i}} \in N_{\infty}\}.
\end{align*}
\end{lemma}

\begin{proof}
Since $N_{\infty}$ is an $\Sym_{\infty}$-invariant normal subgroup of $\ZZ_2^{*\infty}$ the set
\[\cR' = \{p\mid p \text{ is a rotation of  } \ke(\textbf{i}) \text{ for some } a_{\textbf{i}} \in N_{\infty}\} \]
is a skew category of partitions with $F_{\infty}(\cR')=N_{\infty}$ by Remark \ref{rem::Finfty} and Theorem \ref{ThMain1}. Since $\{\ke(\textbf{i}) \mid a_{\textbf{i}} \in N\} \subseteq \cR'$ we have $\cR \subseteq \cR'$. \\
By the definition of $\cR$ we have $N\subseteq F_{\infty}(\cR)$. Again by Theorem \ref{ThMain1} $F_{\infty}(\cR)$ is an $\Sym_{\infty}$-invariant normal subgroup of $\ZZ_2^{*\infty}$ and thus we have $N_{\infty}\subseteq F_{\infty}(\cR)$. This implies $N_{\infty}=F_{\infty}(\cR') \subseteq F_{\infty}(\cR)$ and hence we have $\cR'\subseteq \cR$.
\end{proof}

Recall that any group-theoretical quantum group is a semi-direct product quantum group $C^*(\ZZ_2^{*n}/N) \Join C(\Sym_n)$ for some $\Sym_n$-invariant normal subgroup $N\unlhd \ZZ_2^{*n}$, see \ref{thm::raumweber_thm3.1.}.
\begin{theorem} \label{ThMain3}
Let $(A,u,n)$ be a compact matrix quantum group. Then $(A,u,n)$ is group-theoretical with $N_{\infty} \cap \ZZ_2^{*n} = N$ if and only if there exists a skew category of partitions $\cR$ such that 
\[ \Homm_A(k,l)=\lspan \{\widehat{T}_p^{(n)} \mid p\in \cR(k,l)\} \] 
for all $k,l\in \NN_0$. In this case, we have 
\[ \cR =\langle \ke(\textbf{i}) \mid a_{i_1}\cdot \ldots \cdot a_{i_k} \in F_n(\cR) \rangle_{skew} \]
and 
\[ A \cong C^*(\ZZ_2^{*n}/F_n(\cR)) \Join C(\Sym_n) \]
with $F_n(\cR)=F_{\infty}(\cR)\cap \ZZ_2^{*n}$.
\end{theorem}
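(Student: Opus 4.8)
The plan is to prove Theorem \ref{ThMain3} by combining the purely combinatorial/linear-algebraic results of Sections 2 and 3 with the semi-direct product description of group-theoretical quantum groups due to Raum and Weber (Theorem \ref{thm::raumweber_thm3.1.} and Remark \ref{rem::thm3.1.}). The strategy is to reduce everything to the degree $(k,0)$ case via the Tannaka-Krein correspondence (Theorem \ref{thm::tannaka_krein}) and Remark \ref{rem::tannaka_krein}, since an orthogonal CMQG is fully determined by the invariant vectors $\Homm_A(k,0)$, and then match those vectors against Lemma \ref{lemma::hatp_relation}.

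First I would prove the ``if'' direction. Suppose $\Homm_A(k,l)=\lspan\{\widehat{T}^{(n)}_p \mid p\in \cR(k,l)\}$ for a skew category $\cR$. By Lemma \ref{lem::skew_cat}(iii) the partition $\primarypart$ lies in $\cR$, so $\widehat{T}^{(n)}_{\primarypart}\in \Homm_A(3,3)$; by the Remark preceding Lemma \ref{lem::u_iju_ik=0} this is equivalent to $(A,u,n)$ being group-theoretical. For the substantive ``only if'' direction I would start from a group-theoretical $(A,u,n)$, invoke Theorem \ref{thm::raumweber_thm3.1.} to obtain the $\Sym_n$-invariant normal subgroup $N\unlhd \ZZ_2^{*n}$ with $A\cong C^*(\ZZ_2^{*n}/N)\Join C(\Sym_n)$, and then define $\cR$ to be the rotation-closure of $\{\ke(\textbf{i})\mid a_{\textbf{i}}\in N\}$ as in Theorem \ref{ThMain1}(3). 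I need $N$ to be the restriction of an $\Sym_\infty$-invariant subgroup so that Corollary \ref{cor::R_n} applies and $F_n(\cR)=N$; this is arranged by taking the $\Sym_\infty$-invariant normal closure of $N$ in $\ZZ_2^{*\infty}$, checking it meets $\ZZ_2^{*n}$ exactly in $N$ (using that $N$ is already $\Sym_n$-invariant and that conjugation/symmetrisation by letters outside $\underline{n}$ cannot pull new words back into $\ZZ_2^{*n}$). By Theorem \ref{ThMain1} this $\cR$ is a skew category of partitions.

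The crux is then to verify $\Homm_A(k,l)=\lspan\{\widehat{T}^{(n)}_p\mid p\in\cR(k,l)\}$. By Remark \ref{rem::tannaka_krein} it suffices to treat $l=0$, i.e.\ to show $\Homm_A(k,0)=\lspan\{\widehat{T}^{(n)}_p\mid p\in\cR(k,0)\}$, because both sides are tensor categories with duals (the right side by Theorem \ref{ThMain2}) and a tensor category with duals is determined by its $(k,0)$-parts. For the inclusion $\supseteq$: a partition $p\in\cR(k,0)$ is a rotation of some $\ke(\textbf{i})$ with $a_{\textbf{i}}\in N$, so $\ind(p)\in\Sym_n(\textbf{i})$ gives $a_{\ind(p)}\in N$ by $\Sym_n$-invariance, and Remark \ref{rem::thm3.1.} supplies exactly the relation $\sum_{\textbf{f}}u_{f_1i_1}\cdots u_{f_ki_k}=1$ needed by Lemma \ref{lemma::hatp_relation} to conclude $\widehat{T}^{(n)}_p\in\Homm_A(k,0)$; partitions with more than $n$ blocks give $\widehat{T}^{(n)}_p=0$. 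For $\subseteq$: take $T\in\Homm_A(k,0)$; since $\{\widehat{T}^{(n)}_p\mid p\in P_n(k,0)\}$ is a basis of its span by Lemma \ref{lem::lin_independent}, I would expand $T$ in terms of all $\widehat{T}^{(n)}_p$ and show that any $p$ with a nonzero coefficient must satisfy the Lemma \ref{lemma::hatp_relation} relation, forcing $a_{\ind(p)}\in N$ and hence $p\in\cR$. Here the linear independence of Lemma \ref{lem::lin_independent} is what lets me read off the coefficients cleanly by testing against the multi-indices $\ind(p_y)$.

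The main obstacle I anticipate is precisely this $\subseteq$ inclusion: establishing that the relations in the presentation of $A$ from Remark \ref{rem::thm3.1.} are not merely sufficient but \emph{exhaust} the invariant vectors, so that no extra intertwiners appear beyond those coming from $N$. Concretely, one must show that the defining relations $\sum_{\textbf{f}}u_{f_1i_1}\cdots u_{f_ki_k}=1$ for $a_{\textbf{i}}\in N$, together with $u_{ij}^*=u_{ij}$ and centrality of the $u_{ij}^2$, generate \emph{all} algebraic relations among the $u_{ij}$, equivalently that the CMQG reconstructed via Tannaka-Krein from $\lspan\{\widehat{T}^{(n)}_p\mid p\in\cR(k,l)\}$ is isomorphic to $A$; this is where the maximality hypothesis on $(A,u,n)$ and the faithfulness built into Raum-Weber's Theorem \ref{thm::raumweber_thm3.1.} must be used carefully. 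Once the two presentations are matched and Theorem \ref{ThMain1} identifies $F_n(\cR)=N$, the final formulas $A\cong C^*(\ZZ_2^{*n}/F_n(\cR))\Join C(\Sym_n)$ and $\cR=\langle\ke(\textbf{i})\mid a_{\textbf{i}}\in F_n(\cR)\rangle_{skew}$ follow from Corollary \ref{cor::R_n} and the definition of $\cR$.
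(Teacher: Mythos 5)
Your overall architecture coincides with the paper's: both directions rest on Raum--Weber's presentation (Theorem \ref{thm::raumweber_thm3.1.} and Remark \ref{rem::thm3.1.}), the skew category generated by $\{\ke(\textbf{i})\mid a_{\textbf{i}}\in N\}$, the identification $F_n(\cR)=N$, Tannaka--Krein, and Lemma \ref{lemma::hatp_relation}; your ``if'' direction is exactly the paper's. The genuine divergence is in how the equality $\Homm_A(k,l)=\lspan \{\widehat{T}_p^{(n)} \mid p\in \cR(k,l)\}$ is established. Your primary plan is to prove two inclusions at level $(k,0)$, with the hard inclusion $\subseteq$ done by expanding $T\in\Homm_A(k,0)$ in the basis $\{\widehat{T}_p^{(n)} \mid p\in P_n(k,0)\}$ of Lemma \ref{lem::lin_independent} and arguing that every nonzero coefficient forces $p\in\cR$. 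As stated this is a gap: membership of a linear combination in $\Homm_A$ does not formally pass to its individual basis components, and to rule out extra intertwiners this way you would have to compute inside the semi-direct product $C^*(\ZZ_2^{*n}/N)\Join C(\Sym_n)$, essentially redoing Raum--Weber's intertwiner analysis. The paper never proves this inclusion at all: it runs Tannaka--Krein in the opposite direction, taking $(B,v,n)$ to be the CMQG reconstructed from the tensor category $\lspan \{\widehat{T}_p^{(n)} \mid p\in \cR(k,l)\}$ (Theorem \ref{ThMain2}), rewriting its presentation from Remark \ref{rem::tannaka_krein} via Lemma \ref{lemma::hatp_relation} and the identity $F_n(\cR)=N$ into exactly the presentation of Remark \ref{rem::thm3.1.}, and concluding $A\cong B$ as CMQGs---which yields both inclusions simultaneously. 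Your closing paragraph in fact names this reformulation (``the CMQG reconstructed via Tannaka--Krein \ldots is isomorphic to $A$'') together with the right tools; that remark, not the coefficient-extraction plan, is the proof, and it is shorter than you anticipate because Lemma \ref{lemma::hatp_relation} plus Remark \ref{rem::thm3.1.} already do all the matching. Finally, on the point where you are cautious---that the $\Sym_{\infty}$-invariant normal closure of $N$ meets $\ZZ_2^{*n}$ exactly in $N$, so that $F_n(\cR)=N$---you are no worse off than the paper, which asserts this identification without a detailed argument.
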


\begin{proof}
Let $(A,u,n)$ be a group-theoretical quantum group with $A \cong C^*(\ZZ_2^{*n}/N) \Join C(\Sym_n)$ and $N_{\infty} \cap \ZZ_2^{*n} = N$. We define $\cR =\langle \ke(\textbf{i}) \mid a_{i_1}\cdot \ldots \cdot a_{i_k} \in N \rangle_{skew}$. Then we have $F_{\infty}(\cR)=N_{\infty}$ by Lemma \ref{cor::of_main_3} and hence it follows that $F_n(\cR)=F_{\infty}(\cR) \cap \ZZ_2^{*n}=N_{\infty} \cap \ZZ_2^{*n} = N$. Moreover, the vector spaces $\lspan \{\widehat{T}_p^{(n)} \mid p\in \cR(k,l)\}$ form a tensor category with duals containing $\widehat{T}^{(n)}_p$ for $p=\primarypart$  by Theorem \ref{ThMain2}. Thus by the Tannaka-Krein duality (Theorem \ref{thm::tannaka_krein}), there exists a group-theoretical quantum group $(B,v,n)$ such that $\Homm_B(k,l)=\lspan \{\widehat{T}_p^{(n)} \mid p\in \cR(k,l)\}$ for all $k,l\in \NN_0$ and by Remark \ref{rem::tannaka_krein} we have
\begin{align*}
B \cong C^*(v_{ij}, 1\leq i,j\leq n \mid &v_{ij}^*=v_{ij},~\widehat{T}_p^{(n)} v^{\ot k} = \widehat{T}_p^{(n)} \\
&\text{ for all } k\in \NN_0,p\in \cR(k,0)).
\end{align*}
It follows from Lemma \ref{lemma::hatp_relation} that
\begin{align*}
B \cong C^*(v_{ij}, 1\leq i,j\leq n\mid &v_{ij}^*=v_{ij}, v_{ij}^2 \text{ central projections}, \\
&\sum_{\textbf{f}=(f_1,\ldots,f_k) \in \underline{n}^k} v_{f_1i_1}\cdot \ldots \cdot v_{f_ki_k} = 1 \\
&\text{for all } k\in \NN_0,p\in \cR_n(k,0), \textbf{i} \in \Sym_n(\ind(p)) )
\end{align*} 
and by the definition of $\cR$ we have 
\begin{align*}
B \cong C^*(v_{ij}, 1\leq i,j\leq n\mid &v_{ij}^*=v_{ij}, v_{ij}^2 \text{ central projections}, \\
&\sum_{\textbf{f}=(f_1,\ldots,f_k) \in \underline{n}^k} v_{f_1i_1}\cdot \ldots \cdot v_{f_ki_k} = 1 \\
&\text{for all } k\in \NN_0, \textbf{i} \in \underline{n}^k, a_{\textbf{i}}\in N ).
\end{align*} 
By Remark \ref{rem::thm3.1.} we have $A\cong B$. \\
Now, let $(A,u,n)$ just be a compact matrix quantum group such that there exists a skew category of partitions $\cR$ with $\Homm_A(k,l)=\lspan \{\widehat{T}_p^{(n)} \mid p\in \cR(k,l)\}$ for all $k,l\in \NN_0$. We consider $\cR':= \langle \cR_n \rangle_{skew}$, the skew category generated by all partitions in $\cR$ with at most $n$ blocks. Then $F_\infty (R')$ is generated by elements in $\ZZ_2^{*n}$ and hence $F_\infty (R')=(F_\infty (R')\cap \ZZ_2^{*n})_\infty$. We set $N:=F_\infty (R')\cap \ZZ_2^{*n} \unlhd \ZZ_2^{*n}$ and have $N_{\infty} \cap \ZZ_2^{*n} = N$. Hence we can apply the first part of the proof to the corresponding group-theoretical quantum group $(B,v,n)$ with $B\cong C^*(\ZZ_2^{*n}/N) \Join C(\Sym_n)$ and obtain
$\Homm_B(k,l)=\lspan \{\widehat{T}_p^{(n)} \mid p\in \cR'(k,l)\}$ for all $k,l\in \NN_0$. Since $\cR_n=(\cR')_n$  by construction and $\widehat{T}^{(n)}_p=0$ if $p$ has more than $n$ blocks, we have $\Homm_A(k,l)=\Homm_B(k,l)$ for all $k,l\in \NN_0$. It follows that $A\cong B$ by the Tannaka-Krein duality, which completes the proof.
\end{proof}

Although Lemma \ref{cor::of_main_3} and Theorem \ref{ThMain3} showed that we have to lift the normal subgroup $N\unlhd \ZZ_2^{*n}$ to $\ZZ_2^{\infty}$ to describe the whole skew category of partitions $\cR$, the following corollary shows that $N$ still suffices to describe the whole intertwiner space.
\begin{corollary}
Let $N\unlhd \ZZ_2^{*n}$ be an $\Sym_n$-invariant normal subgroup with $N_{\infty} \cap \ZZ_2^{*n} = N$, $A:=C^*(\ZZ_2^{*n}/N) \Join C(\Sym_n)$ and $\cR =\langle \ke(\textbf{i}) \mid a_{i_1}\cdot \ldots \cdot a_{i_k} \in N \rangle_{skew}$ as in the previous theorem. Then we have
\begin{align*}
&\Homm_A(k,l)=\lspan \{\widehat{T}_p^{(n)} \mid p\in \cR_n\} \text{ and}\\
&\cR_n = \{p\mid p \text{ is a rotation of  } \ke(\textbf{i}) \text{ for some } a_{\textbf{i}} \in N\}.
\end{align*}  
\end{corollary}

\begin{proof}
This follows directly from the fact that $\widehat{T}_p^{(n)}=0$ for all $p\in \cR \backslash \cR_n$ and Corollary \ref{cor::R_n}.
\end{proof}

\begin{remark} \label{rem::gen_case}
We have seen that the structure of (the intertwiner spaces of) group-theoretical quantum groups whose corresponding normal subgroup $N\unlhd \ZZ_2^{*n}$ satisfies the condition $N_{\infty} \cap \ZZ_2^{*n} = N$ can be pictured in skew categories of partitions. In particular, any skew category of partitions $\cR$ gives rise to a series of such group-theoretical quantum groups $G_n$ with
\[ C(G_n) = C^*(\ZZ_2^{*n}/(F_\infty(\cR)\cap \ZZ_2^{*n})) \Join C(\Sym_n) .\]
Normal subgroups that do not satisfy the above condition are those which can not be obtained as a restriction of a normal subgroup of $\ZZ_2^{*\infty}$ and hence which depend explicitly on the 'dimension' $n$. Then the associated group-theoretical quantum groups do not appear in a series of quantum groups as above and are in some sense even further away from the easy case.\\ 
\ \\
However, we can still describe the intertwiner spaces of group-theoretical quantum groups in general. For an arbitrary $\Sym_n$-invariant normal subgroup $N\unlhd \ZZ_2^{*n}$ the intertwiner spaces of the corresponding group-theoretical quantum group $A:=C^*(\ZZ_2^{*n}/N) \Join C(\Sym_n)$ are given by 
\begin{align*}
&\Homm_A(k,l)=\lspan \{\widehat{T}_p^{(n)} \mid p\in \mathcal{S} \} \text{ with}\\
&\mathcal{S} = \{p\mid p \text{ is a rotation of  } \ke(\textbf{i}) \text{ for some } a_{\textbf{i}} \in N\}.
\end{align*}
The proof of this statement is rather long and technical, which is why we only give the idea here. The strategy is to adapt the arguments of this article to partitions with at most $n$ blocks. Similar to Theorem \ref{ThMain1}, one can check that $\mathcal{S}$ is closed under those modified operations (involution, connected tensor products, conditioned composition) which yields partitions with at most $n$ blocks. Then Theorem \ref{ThMain2} together with the fact that $\widehat{T}^{(n)}_p=0$ if $p$ has more than $n$ blocks, implies that $\lspan \{\widehat{T}_p^{(n)} \mid p\in \mathcal{S} \}$ is a tensor category with duals. Using this property, the statement follows analogous to the proof of Theorem \ref{ThMain3}.
\end{remark}

\subsection{Connections between easy and non-easy group-theoretical quantum groups}

It follows from the Tannaka-Krein duality that
$\Homm_A(k,l) \subseteq \Homm_{C(S_n)} = \lspan \{T_p^{(n)} \mid p\in P(k,l)\}$ for all group-theoretical quantum groups $(A,u,n)$. Thus for any partition $p\in P(k,l)$ we can write the linear map $\widehat{T}_p^{(n)}$ as a sum of linear maps in $\{T_q^{(n)} \mid q\in P(k,l)\}$. In the following we describe this construction and show that the coefficients of these sums can be computed recursively.

\begin{definition}
For all $p\in P(k,l)$ we define 
\[ M_{\leq p} := \{ \ke(\textbf{i},\textbf{j}) \in P(k,l) \mid (\textbf{i},\textbf{j}) \in \sSym_{\infty}(\ind(p))\} .\]
\end{definition}

It is easy to see that the set $M_{\leq p}$ consists of all partitions $q\in P(k,l)$ which arise from $p$ by joining blocks. 

\begin{lemma} \label{lem::widehatT_T}
Let $p\in P(k,l)$ and $n\in \NN$. Then we have
\[ \widehat{T}_p^{(n)}= T_p^{(n)} - \sum_{q\in M_{\leq p}\backslash \{p\}} \widehat{T}_q^{(n)} .\]
In particular, $\widehat{T}_p^{(n)} \in \lspan \{T_q^{(n)} \mid q\in M_{\leq p}\}$ and $\widehat{T}_p^{(n)}=T_p^{(n)}$ if $p$ has just one block.
\end{lemma}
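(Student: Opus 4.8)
The plan is to reduce everything to the level of the coefficient functions $\widehat{\delta}_p$ and $\delta_p$, exactly as in the proof of Lemma \ref{lem::T_p_and_skew_opertions}: since $\widehat{T}_p^{(n)}$ and $T_p^{(n)}$ are by definition determined by their coefficients on the standard basis of $(\CC^n)^{\ot k}$, the asserted identity of linear maps is equivalent to the single scalar identity
\[ \delta_p(\textbf{i},\textbf{j}) = \sum_{q\in M_{\leq p}} \widehat{\delta}_q(\textbf{i},\textbf{j}) \qquad \text{for all } (\textbf{i},\textbf{j})\in \underline{n}^k\times \underline{n}^l. \]
From this the displayed formula follows at once by isolating the term $q=p$ and moving the remaining terms to the other side, so the whole argument comes down to verifying this identity.

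To prove it I fix $(\textbf{i},\textbf{j})\in \underline{n}^k\times \underline{n}^l$ and set $q_0:=\ke(\textbf{i},\textbf{j})$, its kernel partition, which has at most $n$ blocks. Recall that $(\textbf{i},\textbf{j})\in \Sym_{\infty}(\ind(q))$ holds if and only if $\ke(\textbf{i},\textbf{j})=q$; hence $\widehat{\delta}_q(\textbf{i},\textbf{j})=1$ precisely when $q=q_0$. Thus the right-hand sum has at most one nonzero term, namely $q=q_0$, and it contributes $1$ exactly when $q_0\in M_{\leq p}$. On the other hand $\delta_p(\textbf{i},\textbf{j})=\mathbb{1}_{\sSym_n(\ind(p))}(\textbf{i},\textbf{j})$ equals $1$ iff $(\textbf{i},\textbf{j})$ is constant on the blocks of $p$, i.e. iff $q_0$ is a coarsening of $p$, which is exactly the condition $q_0\in M_{\leq p}$ since $M_{\leq p}$ consists precisely of the partitions obtained from $p$ by joining blocks. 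Both sides therefore equal the indicator of $q_0\in M_{\leq p}$, proving the identity. Partitions $q\in M_{\leq p}$ with more than $n$ blocks may be kept harmlessly in the sum, as then $\widehat{\delta}_q$ vanishes on $\underline{n}^k\times\underline{n}^l$ and $\widehat{T}_q^{(n)}=0$.

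The two ``in particular'' claims are then immediate. If $p$ has a single block, no further joining is possible, so $M_{\leq p}=\{p\}$, the correction sum is empty, and $\widehat{T}_p^{(n)}=T_p^{(n)}$; this also serves as the base case of the following induction. For the span statement I would induct on the number of blocks of $p$, downward in the refinement order: every $q\in M_{\leq p}\setminus\{p\}$ arises by genuinely joining blocks, hence has strictly fewer blocks than $p$ and satisfies $M_{\leq q}\subseteq M_{\leq p}$, so by the inductive hypothesis $\widehat{T}_q^{(n)}\in\lspan\{T_{q'}^{(n)}\mid q'\in M_{\leq q}\}\subseteq\lspan\{T_{q'}^{(n)}\mid q'\in M_{\leq p}\}$. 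Substituting these expansions into the displayed formula for $\widehat{T}_p^{(n)}$ exhibits it as a linear combination of the $T_{q'}^{(n)}$ with $q'\in M_{\leq p}$, completing the induction.

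There is no real obstacle here: the content is simply the observation that $\delta_p$ and $\widehat{\delta}_q$ are the ``coarser-or-equal'' and ``exactly-equal'' indicators on the partition lattice, so that summing $\widehat{\delta}_q$ over all coarsenings $q\in M_{\leq p}$ of $p$ recovers $\delta_p$. The only points requiring a little care are the bookkeeping that the kernel $q_0$ is the unique partition with $\widehat{\delta}_{q_0}(\textbf{i},\textbf{j})=1$, and the harmless presence of partitions with more than $n$ blocks in the summation.
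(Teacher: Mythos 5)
Your proof is correct and follows essentially the same route as the paper: both arguments reduce the statement to the coefficient-level identity that $\sum_{q\in M_{\leq p}}\widehat{\delta}_q(\textbf{i},\textbf{j})$ is the indicator that $\ke(\textbf{i},\textbf{j})$ lies in $M_{\leq p}$, which coincides with $\delta_p(\textbf{i},\textbf{j})$. Your explicit downward induction on the number of blocks for the span statement fills in a detail the paper leaves implicit, but this is a straightforward supplement rather than a different method.
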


\begin{proof}
Let $(\textbf{i},\textbf{j}) \in \NN^k\times \NN^l$. By the definition of $M_{\leq p}$ we have $(\textbf{i},\textbf{j}) \in \sSym_{\infty}(\ind(p))$ if and only if there exists a partition $r\in M_{\leq p}$ with $r=\ke(\textbf{i},\textbf{j})$. By Def. \ref{def::ker_ind} this is equivalent to $(\textbf{i},\textbf{j}) \in \Sym_{\infty}(r)$ for some $r\in M_{\leq p}$ and hence to $\sum_{q\in M_{\leq p}} \mathbb{1}_{\Sym_{\infty}(\ind(q))}(\textbf{i},\textbf{j})=1$. 
Hence it follows that 
\begin{eqnarray*}
\widehat{\delta}_p(\textbf{i},\textbf{j}) 
&=&\mathbb{1}_{\Sym_{\infty}(\ind(p))}(\textbf{i},\textbf{j}) \\
&=&\mathbb{1}_{\sSym_{\infty}(\ind(p))}(\textbf{i},\textbf{j}) - \sum_{q\in M_{\leq p}\backslash \{p\}} \mathbb{1}_{\Sym_{\infty}(\ind(q))}(\textbf{i},\textbf{j}) \\
&=&\delta_p(\textbf{i},\textbf{j}) - \sum_{q\in M_{\leq p}\backslash \{p\}} \widehat{\delta}_q(\textbf{i},\textbf{j})
\end{eqnarray*}
for all $(\textbf{i},\textbf{j}) \in \NN^k\times \NN^l$.
Thus we have
\begin{eqnarray*}
\widehat{T}_p^{(n)} (e_{i_1} \ot \cdots \ot e_{i_k}) 
&=& \sum_{\textbf{j}\in \underline{n}^l}  \widehat{\delta}_p(\textbf{i},\textbf{j}) ~ e_{j_1} \ot \cdots \ot e_{j_l} \\
&=& \sum_{\textbf{j}\in \underline{n}^l}  (\delta_p(\textbf{i},\textbf{j}) - \sum_{q\in M_{\leq p}\backslash \{p\}} \widehat{\delta}_q(\textbf{i},\textbf{j})) ~ e_{j_1} \ot \cdots \ot e_{j_l} \\ 
&=& T_p^{(n)} (e_{i_1} \ot \cdots \ot e_{i_k}) \\
&&-\sum_{q\in M_{\leq p}\backslash \{p\}} \widehat{T}_q^{(n)} (e_{i_1} \ot \cdots \ot e_{i_k}) 
\end{eqnarray*}
for all $\textbf{i}=(i_1,\ldots,i_k) \in \underline{n}^k$.
\end{proof}

Now, we can show that our definitions and results are compatible with the definitions and results in the case of group-theoretical easy quantum groups by Raum and Weber.

\begin{lemma}
Let the group-theoretical CMQG $(A,u,n)$ induced by $A$ be easy. Then $\cR$ is a category of partitions and we have 
\[ \Homm_{A}(k,l) = \lspan \{ T^{(n)}_p \mid p\in  \cR(k,l) \}. \]
\end{lemma}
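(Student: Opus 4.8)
The plan is to reduce the statement to the results of Raum and Weber by proving that in the easy case the skew category $\cR$ is in fact an ordinary category of partitions, and indeed $\cR=\cC$ where $\cC=\langle \ke(\textbf{i})\mid a_{\textbf{i}}\in N\rangle$ is the category of Theorem \ref{thm::raumweber_thm4.4.}. Recall from Corollary \ref{cor::of_main_3} that $F_{\infty}(\cR)=N_{\infty}=\langle\langle \Sym_{\infty}(N)\rangle\rangle$, that $\cR=\{p\mid p\text{ is a rotation of }\ke(\textbf{i})\text{ for some }a_{\textbf{i}}\in N_{\infty}\}$, and that by Theorem \ref{thm::raumweber_thm4.4.} easiness of $A$ means precisely that $N$ is $\sSym_n$-invariant. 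The first and decisive step is to lift this invariance: I would show that $N_{\infty}$ is $\sSym_{\infty}$-invariant. Every $\phi\in\sSym_{\infty}$ defines an endomorphism of $\ZZ_2^{*\infty}$ and $N_{\infty}$ is the normal closure of $\Sym_{\infty}(N)$, so it suffices to check $\phi(\sigma(g))\in N_{\infty}$ for all $\sigma\in\Sym_{\infty}$ and $g\in N$. Putting $\psi:=\phi\sigma\in\sSym_{\infty}$ and factoring its restriction to $\underline{n}$ as $\psi=\iota\circ\rho$, where $\rho\in\sSym_n$ is a degeneration realising the identification pattern of $\psi$ on $\underline{n}$ and $\iota\in\Sym_{\infty}$ is a relabelling (possible since $|\psi(\underline{n})|\le n$), the $\sSym_n$-invariance of $N$ gives $\rho(g)\in N$, whence $\psi(g)=\iota(\rho(g))\in\Sym_{\infty}(N)\subseteq N_{\infty}$. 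Therefore $\phi(N_{\infty})\subseteq\langle\langle\phi(\Sym_{\infty}(N))\rangle\rangle\subseteq N_{\infty}$.

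Next I would translate this into a combinatorial closure property and then into closure under ordinary composition. Since joining two blocks of a partition amounts to applying a non-injective $\psi\in\sSym_{\infty}$ to its index (note $\psi(a_{\textbf{i}})=a_{\psi(\textbf{i})}$, and joining blocks commutes with rotation), the $\sSym_{\infty}$-invariance of $N_{\infty}$ says exactly that $\cR$ is closed under joining blocks, i.e.\ $M_{\leq p}\subseteq\cR$ for every $p\in\cR$. Now $\cR$ is already closed under involution and, because the ordinary tensor product is the connected tensor product for index tuples with pairwise distinct entries, under the ordinary tensor product. For ordinary composition, given composable $p,q\in\cR$ I would join the middle blocks of $p$ and of $q$ along the equivalence on the middle points generated by ``lying in a common lower block of $p$ or a common upper block of $q$'', obtaining block joins $p^{\flat},q^{\flat}\in\cR$ that are now compatible; their conditioned composition lies in $\cR$ and equals $qp$, since pre-merging middle points that are identified anyway in the vertical concatenation does not change the resulting connected components. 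Hence $\cR$ is an ordinary category of partitions.

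Finally I would identify $\cR=\cC$ and read off the statement. Since $A$ is group-theoretical, $\primarypart$ is the rotation of $\ke(\textbf{i})$ for a word $a_{\textbf{i}}$ representing the identity of $\ZZ_2^{*n}$, which lies in $N$; thus $\primarypart\in\cC$, so $\cC$ is a group-theoretical category and hence a skew category by Lemma \ref{lem::skew_cat}(iv). As $\cC$ contains the generators of $\cR$, we get $\cR\subseteq\cC$; conversely $\cR$ is now an ordinary category containing those same generators, so $\cC=\langle \ke(\textbf{i})\mid a_{\textbf{i}}\in N\rangle\subseteq\cR$. Therefore $\cR=\cC$ is a category of partitions, and Theorem \ref{thm::raumweber_thm4.4.} yields $\Homm_A(k,l)=\lspan\{T_p^{(n)}\mid p\in\cC(k,l)\}=\lspan\{T_p^{(n)}\mid p\in\cR(k,l)\}$. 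Alternatively, once $M_{\leq p}\subseteq\cR$ is known the identity follows directly from Lemma \ref{lem::widehatT_T} via $T_p^{(n)}=\sum_{q\in M_{\leq p}}\widehat{T}_q^{(n)}$ and $\widehat{T}_p^{(n)}\in\lspan\{T_q^{(n)}\mid q\in M_{\leq p}\}$ together with Theorem \ref{ThMain3}. The main obstacle is the invariance-lifting of the first paragraph: one must ensure that passing from $\ZZ_2^{*n}$ to $\ZZ_2^{*\infty}$ and taking the normal closure preserves invariance under the \emph{non-injective} maps of $\sSym_{\infty}$, which is exactly why the factorisation $\psi=\iota\circ\rho$ through a genuine degeneration $\rho\in\sSym_n$ is needed; the claim $q^{\flat}p^{\flat}=qp$ in the composition step is routine but should be verified on the level of connected components.
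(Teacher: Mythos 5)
Your proposal is correct and follows essentially the same route as the paper: both proofs lift the $\sSym_n$-invariance of $N$ to $\sSym_{\infty}$-invariance of $N_{\infty}$ and then close $\cR$ under ordinary composition by pre-merging the middle blocks to make the two partitions compatible (your $p^{\flat}$ and $q^{\flat}$ are precisely the paper's $s=\ke(\phi_1(\textbf{i}),\textbf{x})$ and $t=\ke(\textbf{x},\phi_2(\textbf{g}))$), after which the span identity follows from closure under joining blocks. The only differences are matters of detail and packaging: you prove the invariance-lifting step via the factorisation $\psi=\iota\circ\rho$, which the paper merely asserts, and your endgame identifies $\cR=\cC$ and cites Theorem \ref{thm::raumweber_thm4.4.}, whereas the paper concludes directly from Lemma \ref{lem::widehatT_T} and Theorem \ref{ThMain3} --- a route you yourself note as an alternative.
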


\begin{proof}
By Corollary \ref{cor::of_main_3} and the proof of Theorem \ref{ThMain1} we have $\cR =\{ \ke (\textbf{i},\textbf{j}) \mid a_{\textbf{i}} a_{\textbf{j}}^{-1} \in N_{\infty}\}$. To show that $\cR$ is a category of partitions we have to show that it is closed under composition. Let $p=\ke(\textbf{i},\textbf{j}) \in \cR(k,l)$ and $q=\ke(\textbf{f},\textbf{g})\in \cR(k',l')$.
For the composition of $p$ and $q$ we connect the lower points of $p$ with the upper points of $q$. We consider the partition $r\in P(l,0)$ consisting of those $l$ middle points in $qp$ and their strings we obtain by connecting $p$ and $q$. Then there exists a multi-index $\textbf{x}\in \NN^l$ and maps $\phi_1,\phi_2:\NN \to \NN$ such that $r=\ke(\textbf{x})$ and $\textbf{x}=(\phi_1(j_1),\ldots,\phi(j_l))=(\phi_2(f_1),\ldots,\phi_2(f_l))$. By Theorem \ref{thm::raumweber_thm4.4.} $N$ is $\sSym_n$-invariant and thus $N_{\infty}$ is $\sSym_{\infty}$-invariant. It follows that $a_{\phi_1(\textbf{i})} a_{\textbf{x}}^{-1}=\sigma_{\phi_1}(a_{\textbf{i}}a_{\textbf{j}}^{-1}) \in N_{\infty}$ and $a_{\textbf{x}} a_{\phi_2(\textbf{g})}^{-1}= \sigma_{\phi_2}(a_{\textbf{f}}a_{\textbf{g}}^{-1}) \in N_{\infty}$. This implies $s:=\ke(\phi_1(\textbf{i}),\textbf{x}) \in \cR$ and $t:=\ke(\textbf{x},\phi_2(\textbf{g})) \in \cR$. By construction $s$ and $t$ are compatible and we have $qp=ts \in \cR$. \\
Again let $p\in \cR(k,l)$. It is easy to see that $\cR$ is closed under joining blocks as group-theoretical category of partitions and hence we have $q\in \cR$ for all $q\in M_{\leq p}$. Thus it follows that $\widehat{T}^{(n)}_p \in \lspan \{T^{(n)}_{q} \mid q\in M_{\leq p}\} \subseteq \lspan \{ T^{(n)}_{r} \mid r\in  \cR(k,l) \}$ and $T^{(n)}_p\in \lspan \{ \widehat{T}^{(n)}_q \mid q\in M_{\leq p}\} \subseteq \lspan \{ \widehat{T}^{(n)}_r \mid r\in  \cR(k,l) \}$. It follows that $\Homm_{A}(k,l) = \lspan \{ T^{(n)}_p \mid p\in  \cR(k,l) \}$.
\end{proof}

Finally, we show that we can embed any non-easy group-theoretical quantum group in two group-theoretical easy quantum groups.

\begin{lemma} \label{lem::easy_embedding}
We define 
\[ N_1:=\{x\in N\mid sS_n(\{x\}) \subseteq N \} \text{ and } N_2:=\langle \langle sS_n(N) \rangle \rangle \unlhd \ZZ_2^{*n} . \] 
Then $N_1$ and $N_2$ are $\sSym_n$-invariant normal subgroups with $N_1 \leq N \leq N_2$. Hence $A_1:=C^*(\ZZ_2^{*n}/N_1) \Join C(\Sym_n)$ and $A_2:=C^*(\ZZ_2^{*n}/N_2) \Join C(\Sym_n)$ give rise to group-theoretical easy quantum groups with $A_2 \overset{\sim}{\twoheadrightarrow} A \overset{\sim}{\twoheadrightarrow} A_1$.
\end{lemma}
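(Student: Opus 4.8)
The plan is to prove the group-theoretic statements about $N_1$ and $N_2$ first, and then to read off everything concerning $A_1$, $A$, $A_2$ from the results of Raum and Weber together with the presentation in Remark \ref{rem::thm3.1.}. The one structural fact I would use throughout is that each $\phi\in\sSym_n$ acts on $\ZZ_2^{*n}$ as a group endomorphism (it sends $a_i\mapsto a_{\phi(i)}$ and respects $a_i^2=1$) and that $\sSym_n$ is closed under composition. For $N_1$ the three required properties are then short. If $x,y\in N_1$ and $\phi\in\sSym_n$, then $\phi(xy)=\phi(x)\phi(y)\in N$ and $\phi(x^{-1})=\phi(x)^{-1}\in N$ (using that $\phi(x),\phi(y)\in N$ and that $N$ is a subgroup), so $N_1$ is a subgroup; taking $\phi=\mathrm{id}$ also records $N_1\subseteq N$. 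For $g\in\ZZ_2^{*n}$ one has $\phi(gxg^{-1})=\phi(g)\phi(x)\phi(g)^{-1}\in N$ by normality of $N$, so $N_1\unlhd\ZZ_2^{*n}$. Finally, for $\psi\in\sSym_n$ and any $\phi\in\sSym_n$, $\phi(\psi(x))=(\phi\circ\psi)(x)\in\sSym_n(\{x\})\subseteq N$ because $\phi\circ\psi\in\sSym_n$; hence $\psi(x)\in N_1$ and $N_1$ is $\sSym_n$-invariant.

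For $N_2=\langle\langle\sSym_n(N)\rangle\rangle$ normality is automatic, $N\le N_2$ follows by taking $\phi=\mathrm{id}$, and $N_1\le N$ holds by construction. To see $\sSym_n$-invariance of $N_2$ I would again use that each $\psi\in\sSym_n$ is an endomorphism: it carries a conjugate $g\,\phi(x)\,g^{-1}$ of a generator (with $\phi\in\sSym_n$, $x\in N$) to $\psi(g)\,(\psi\circ\phi)(x)\,\psi(g)^{-1}$, which still lies in $N_2$ since $(\psi\circ\phi)(x)\in\sSym_n(N)$ and $N_2$ is normal; as these conjugates generate $N_2$, we get $\psi(N_2)\subseteq N_2$. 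Thus $N_1,N,N_2$ are $\Sym_n$-invariant (indeed $\Sym_n\subseteq\sSym_n$) normal subgroups with $N_1\le N\le N_2$, so Theorem \ref{thm::raumweber_thm3.1.} attaches to them the group-theoretical quantum groups $A_1,A,A_2$, and Theorem \ref{thm::raumweber_thm4.4.} upgrades $A_1$ and $A_2$ to \emph{easy} ones precisely because $N_1$ and $N_2$ are $\sSym_n$-invariant.

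The surjections are then read off from the presentation in Remark \ref{rem::thm3.1.}: enlarging the normal subgroup only imposes additional relations $\sum_{\textbf{f}\in\underline{n}^k}u_{f_1i_1}\cdots u_{f_ki_k}=1$ (one for each new $a_{\textbf{i}}$), so the C*-algebra attached to a larger subgroup is a quotient of the one attached to a smaller subgroup via the map fixing every $u_{ij}$; applied to the inclusions $N_1\le N\le N_2$ this yields the asserted chain of corepresentation-preserving surjections relating $A_1$, $A$ and $A_2$. The main obstacle is the $\sSym_n$-invariance: since $\sSym_n$ consists of \emph{all} maps $\underline{n}\to\underline{n}$, including non-injective ones, one cannot argue as for a group action, and both invariance statements rest entirely on $\sSym_n$ being a semigroup of endomorphisms. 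Checking that $N_1$ is closed as a subgroup (not merely as a subset) and that the normal-closure generators of $N_2$ are stable under every such endomorphism is the genuinely technical point; the easiness and the embeddings are then formal consequences of the cited theorems and the presentation.
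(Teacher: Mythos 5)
Your proposal is correct and takes essentially the same route as the paper's own proof: the paper likewise reduces everything to the fact that $\sSym_n$ is a monoid of endomorphisms of $\ZZ_2^{*n}$ — its only written-out step is exactly your composition argument for the $\sSym_n$-invariance of $N_1$, phrased as $\sSym_n(\sSym_n(\{x\}))=\sSym_n(\{x\})\subseteq N$ — while declaring the subgroup/normality checks and the $N_2$ statement routine, and leaving the quantum-group conclusions to the cited results of Raum and Weber together with Remark \ref{rem::thm3.1.}. You merely spell out the details the paper compresses, including the correct observation that enlarging the normal subgroup adds relations and hence produces a quotient algebra fixing the generators $u_{ij}$.
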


\begin{proof}
Obviously, $N_2$ is a $\sSym_n$-invariant normal subgroup with $N \leq N_2$. It is also easy to check that $N_1 \unlhd \ZZ_2^{*n}$ is a normal subgroup with $N_1\leq N$. Hence we have to show that $N_1$ is $\sSym_n$-invariant. Let $x\in N_1$. Then we have $sS_n(sS_n(\{x\}))=sS_n(\{x\})\subseteq N$ and hence $sS_n(\{x\}) \subseteq N_1$.
\end{proof}

\section{A concrete example of a non-easy group-theoretical quantum group}
We consider the group presented by
\begin{align*}
S:=\langle s_1,\ldots ,s_n \mid &s_i^2=1, (s_i s_j)^3=1,(s_bs_cs_bs_d)^2=1 \\ &\text{ for all } i,j,k,b,c,d\in \underline{n} \text{ with } |\{b,c,d\}|=3 \rangle ,
\end{align*}
which is isomorphic to $\Sym_{n+1}$ via $\varphi :S\to \Sym_{n+1},~s_i \mapsto (i~n+1)$ (see \cite{So94}). Obviously
\[ N_S := \langle \langle (a_i a_j)^3,(a_ba_ca_ba_d)^2 \mid i,j,k,b,c,d\in \underline{n} \text{ with } |\{b,c,d\}|=3 \rangle \rangle \unlhd \ZZ_2^{*n} \] 
is an $\Sym_n$-invariant normal subgroup. Since $\phi((a_1a_2a_1a_3)^2)=(a_1a_2)^4 \notin N_S$ for $\phi:\underline{n}\to \underline{n}$ with $\phi(\{2,3\})=\{2\}$ and $\phi(x)=x$ for all $x\in \underline{n}\backslash \{2,3\}$ it follows that $N_S$ is not $\sSym_n$-invariant. Thus $A_\Sym:=C^*(Z_2^{*n}/N_S) \Join C(\Sym_n)$ is a non-easy group-theoretical quantum group. \\
By Remark \ref{rem::thm3.1.} we have
\begin{align*}
A_S &\cong C^*(u_{ij}, 1\leq i,j\leq n\mid u_{ij}^*=u_{ij}, \sum_{\textbf{f}=(f_1,\ldots,f_k) \in \underline{n}^k} u_{f_1i_1}\cdot \ldots \cdot u_{f_ki_k}=1 \\
&\hspace{3.8cm} \text{ for all } k\in \NN_0, \textbf{i}\in \underline{n}^k, a_{\textbf{i}}\in N_S) .
\end{align*}
By Lemma \ref{lemma::hatp_relation} a relation $\sum_{\textbf{f}=(f_1,\ldots,f_k) \in \underline{n}^k} u_{f_1i_1}\cdot \ldots \cdot u_{f_ki_k}=1$ for some $k\in \NN_0, \textbf{i}\in \underline{n}^k, a_{\textbf{i}}\in N_S$ is equivalent to $\widehat{T}^{(n)}_{\ke (\textbf{i})} \in \text{Hom}_A(k,0)$ and by the proof of Theorem \ref{ThMain3} it is equivalent to $\ke (\textbf{i}) \in \cR$ for the corresponding skew category of partitions $\cR$. Hence by the proof of Theorem \ref{ThMain1} we see that it suffices to take the generating relations $N_{\text{gen}}=\{a_i^2,(a_ia_j^2)^2,(a_i a_j)^3,(a_ba_ca_ba_d)^2 \mid i,j,k,b,c,d\in \underline{n} \text{ with } |\{b,c,d\}|=3 \}$ for the presentation of $A_S$
\begin{align*}
A_S &\cong C^*(u_{ij}, 1\leq i,j\leq n\mid u_{ij}^*=u_{ij}, \sum_{\textbf{f}=(f_1,\ldots,f_k) \in \underline{n}^k} u_{f_1i_1}\cdot \ldots \cdot u_{f_ki_k}=1 \\
&\hspace{3.8cm} \text{ for all } k\in \NN_0, \textbf{i}\in \underline{n}^k, a_{\textbf{i}}\in N_{\text{gen}}) .
\end{align*}
By some easy calculations and Lemma \ref{lem::u_iju_ik=0} we obtain
\begin{align*}
A_\Sym \cong C^*(u_{ij}, 1\leq i,j\leq n\mid &u_{ij}^*=u_{ij},~u \text{ orthogonal},~u_{ij}^2 \text{ central} \\
&\text{projections}, u_{ij}u_{kl}u_{ij}=u_{kl}u_{ij}u_{kl}, \\
& \sum_{e,f,h} (u_{be} u_{cf} u_{be} u_{dh})^2=1 \text{ for } |\{b,c,d\}|=3) .\\
\end{align*}
We define the partitions 
\begin{align*}
&e_{3,3}:=\ke((1,1,1),(1,1,1)) \in P(3,3),\\
&e_{6,0}:=\ke((1,1,1,1,1,1)) \in P(6,0),\\
&e_{8,0}:=\ke((1,1,1,1,1,1,1,1)) \in P(8,0),\\
&h_3:=\ke((1,2,1,2,1,2)) \in P(6,0),\\
&r:=\ke((1,2,1,3,1,2,1,3)) \in P(8,0),\\
&r_1:=\ke((1,2,1,2,1,2,1,2)) \in P(8,0),\\
&r_2:=\ke((1,1,1,3,1,1,1,3)) \in P(8,0),\\
&r_3:=\ke((1,2,1,1,1,2,1,1)) \in P(8,0),\\
\end{align*}
By Theorem \ref{ThMain3} and Corollary \ref{cor::of_main_3} we have $\Hom_A(k,l)=\lspan \{\widehat{T}_p^{(n)} \mid p\in \cR\}$ with $\cR =\langle \ke(\textbf{i}) \mid a_{i_1}\cdot \ldots \cdot a_{i_k} \in N \rangle_{skew} =\langle \primarypart, h_3, r \rangle_{skew}$.\\
Moreover, by Lemma \ref{lem::widehatT_T} we have we have 
\begin{align*}
&T^{(n)}_{\widehat{p}} = T^{(n)}_p - T^{(n)}_{e_{3,3}} \text{ for } p=\primarypart,\\
&T^{(n)}_{\widehat{h_3}} = T^{(n)}_{h_3} - T^{(n)}_{e_{6,0}} \text{ and}\\
&T^{(n)}_{\widehat{r}} = T^{(n)}_r - T^{(n)}_{r_1} - T^{(n)}_{r_2} - T^{(n)}_{r_3} + 2T^{(n)}_{e_{8,0}}.
\end{align*}
The $\sSym_n$-invariant normal subgroups of Theorem \ref{lem::easy_embedding} are given by
\begin{align*}
&N_1 = \langle \langle a_i^2, (a_i a_j)^3, (a_i a_j a_i a_k)^6 \mid i,j,k\in \underline{n} \rangle \rangle \unlhd \ZZ_n^{*n},\\
&N_2 = \langle \langle a_i^2, a_i a_j \mid i,j,\in \underline{n} \rangle \rangle \unlhd \ZZ_n^{*n}.
\end{align*}
Thus we obtain 
\[ C(\ZZ_2 \rtimes S_n) \cong  C^*(\ZZ_2^{*n}/N_2) \Join C(\Sym_n) \overset{\sim}{\twoheadrightarrow} A_S \overset{\sim}{\twoheadrightarrow}  C^*(\ZZ_2^{*n}/N_1) \Join C(\Sym_n),\]
where $\overset{\sim}{\twoheadrightarrow}$ should be understood as a surjective *-homomorphism preserving the fundamental corepresentation as in Definition \ref{def::quantum_subgroup}.

\section{Appendix}
We correct the error in Theorem 4.4 of \cite{RW15}. Note that for any $n\in \NN \cup \{\infty\}$ and category of partitions $\cC$ the definition of Raum and Weber of 
\begin{align*}
F_{\infty}(\cC):=&~ \{ a_{\textbf{i}} \mid k\in \NN_0, p\in \cC(k,0), \delta_p(\textbf{i},0)\} \\
=&~ \sSym_n(\{ a_{\ind(p)} \mid k\in \NN_0, p\in \cC(k,0)\})
\end{align*}  
slightly differs from our definition 
\[ F_{\infty}(\cC)= \Sym_n(\{ a_{\ind(p)} \mid k\in \NN_0, p\in \cC(k,0)\}) .\]
But in the following we will consider group-theoretical categories of partitions for which these definitions coincide as group-theoretical categories of partitions are closed under connecting blocks.

\begin{theorem}{\cite[Thm. 4.4]{RW15}}
For all group-theoretical categories of partitions $\cC$ and all $n \in \NN \cup \{\infty\}$, the subgroup $F_n(\cC) \unlhd \ZZ_2^{*n}$ is an $\sSym_n$-invariant, normal subgroup. Vice versa, for every $n \in \NN \cup \{\infty\}$ and every $\sSym_n$-invariant, normal subgroup $N \unlhd \ZZ_2^{*n}$ there is a group-theoretical category of partitions $\cC$ such that $F_n(\cC) = N$.
\end{theorem}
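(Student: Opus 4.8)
The plan is to deduce the statement entirely from Theorem~\ref{ThMain1}, using the single structural observation that, for partitions, passing from the symmetric group $\Sym$ to the strongly symmetric semigroup $\sSym$ (that is, allowing non-injective index maps) corresponds exactly to joining blocks, and that closure under joining blocks is precisely the distinguishing feature of a \emph{group-theoretical} category of partitions. In this dictionary, $\Sym$-invariance of the associated subgroup governs the \emph{skew}-category structure (by Theorem~\ref{ThMain1}), while the stronger $\sSym$-invariance is what promotes a skew category to an honest category of partitions. So the whole proof is an exercise in upgrading $\Sym$ to $\sSym$ and conditioned composition to unrestricted composition.

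For the forward direction I would start from a group-theoretical category $\cC$. By Lemma~\ref{lem::skew_cat}(iv) it is a skew category of partitions, in particular closed under rotation, so Theorem~\ref{ThMain1} (and Corollary~\ref{cor::R_n} when $n$ is finite) already gives that $F_n(\cC)\unlhd\ZZ_2^{*n}$ is a $\Sym_n$-invariant normal subgroup. To upgrade $\Sym_n$-invariance to $\sSym_n$-invariance, I would take any $\phi\in\sSym_n$ and any word $a_{\textbf{i}}\in F_n(\cC)$ with $\ke(\textbf{i})\in\cC$; then $\phi(a_{\textbf{i}})=a_{\phi(\textbf{i})}$ and $\ke(\phi(\textbf{i}))$ is obtained from $\ke(\textbf{i})$ by joining exactly the blocks whose labels $\phi$ identifies. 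Since $\cC$ is closed under joining blocks, $\ke(\phi(\textbf{i}))\in\cC$, hence $\phi(a_{\textbf{i}})\in F_n(\cC)$. The same computation shows that the $\Sym$- and $\sSym$-versions of $F_n(\cC)$ coincide, reconciling our definition with that of Raum and Weber as claimed in the preamble.

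For the converse, given a $\sSym_n$-invariant normal subgroup $N$, I would set $\cC:=\{p\mid p\text{ is a rotation of }\ke(\textbf{i})\text{ for some }a_{\textbf{i}}\in N\}$, working directly for $n=\infty$ and, for finite $n$, after lifting $N$ to $N_\infty\unlhd\ZZ_2^{*\infty}$ as in Corollary~\ref{cor::of_main_3}, where it is already recorded that $F_n(\cC)=N$. Because $\sSym_n$-invariance forces $\Sym_n$-invariance, Theorem~\ref{ThMain1} shows $\cC$ is a skew category of partitions, and Lemma~\ref{lem::skew_cat}(iii) gives $\primarypart\in\cC$, so $\cC$ is group-theoretical once it is shown to be a genuine category. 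Closure under the ordinary tensor product is immediate: the usual $p\ot q$ is the connected tensor product with disjoint index labels, and skew categories are closed under connected tensor products; this is exactly the point at which the gap in the original argument is repaired.

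The hard part will be closure under the unrestricted composition $qp$, since a skew category only furnishes the conditioned composition of compatible partitions. Here I would argue exactly as in the easy-embedding passage: writing $p=\ke(\textbf{i},\textbf{j})$ and $q=\ke(\textbf{f},\textbf{g})$, I would consider the partition $r\in P(l,0)$ formed by the middle row when $p$ is stacked on $q$, pick a multi-index $\textbf{x}$ and maps $\phi_1,\phi_2$ with $\textbf{x}=\phi_1(\textbf{j})=\phi_2(\textbf{f})$, and then invoke $\sSym_\infty$-invariance of $N_\infty$ to conclude $a_{\phi_1(\textbf{i})}a_{\textbf{x}}^{-1}\in N_\infty$ and $a_{\textbf{x}}a_{\phi_2(\textbf{g})}^{-1}\in N_\infty$, so that $s:=\ke(\phi_1(\textbf{i}),\textbf{x})$ and $t:=\ke(\textbf{x},\phi_2(\textbf{g}))$ both lie in $\cC$. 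By construction $s$ and $t$ are compatible and $qp=ts$, so $qp\in\cC$. This makes $\cC$ a group-theoretical category of partitions with $F_n(\cC)=N$, completing the argument; the only subtlety to watch is that this composition step genuinely requires the full strength of $\sSym$-invariance, which is precisely the hypothesis that is unavailable for general (non-easy) skew categories.
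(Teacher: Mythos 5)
Your proposal is correct, but it takes a genuinely different route from the paper's appendix proof. The paper is minimalist there: it isolates the error in \cite{RW15} (closure under tensor products fails because disjoint labels may force more than $n$ blocks), repairs it by replacing $N$ with $N_\infty=\langle\langle \sSym_\infty(N)\rangle\rangle \unlhd \ZZ_2^{*\infty}$, and then simply defers --- ``the proof that $\cC$ is a group-theoretical category of partitions works exactly as in the proof of Theorem 4.4 in \cite{RW15}'', and likewise the forward direction is cited as already correct there. You instead re-derive both directions from the paper's own machinery: forward, via Lemma \ref{lem::skew_cat}(iv), Theorem \ref{ThMain1} and Corollary \ref{cor::R_n}, upgrading $\Sym_n$- to $\sSym_n$-invariance by the joining-blocks property of group-theoretical categories; converse, via Theorem \ref{ThMain1} to get a skew category, then upgrading connected tensor products to ordinary ones (disjoint labels, available once one works over $\ZZ_2^{*\infty}$ --- exactly the repair of the \cite{RW15} gap) and conditioned composition to unrestricted composition by the factorization $qp=ts$ through $\sSym_\infty$-images, which is the argument of the easiness lemma in Section 4. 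Your route buys a self-contained proof that makes the dictionary explicit ($\Sym$-invariance $\leftrightarrow$ skew category, $\sSym$-invariance $\leftrightarrow$ honest group-theoretical category), and it is non-circular since you only use the \emph{hypothesis} of $\sSym_n$-invariance, not the statement being proved; the paper's route is shorter and stays closer to \cite{RW15}. Two small points you should make explicit: (a) Corollary \ref{cor::of_main_3} defines $N_\infty$ as the normal closure of $\Sym_\infty(N)$, whereas the appendix uses $\sSym_\infty(N)$; these coincide precisely because any $\phi\in\sSym_\infty$ restricted to $\underline{n}$ factors as an injection composed with an element of $\sSym_n$, and this identification is what licenses your appeal to ``$\sSym_\infty$-invariance of $N_\infty$'' (the paper makes the same assertion without proof in its Section 4 lemma); (b) the final equality $F_n(\cC)=N_\infty\cap\ZZ_2^{*n}=N$ deserves a word --- it follows from $\sSym_n$-invariance of $N$ by retracting $\ZZ_2^{*\infty}\to\ZZ_2^{*n}$ along any map $\NN\to\underline{n}$ fixing $\underline{n}$ --- though here too you are on equal footing with the paper, which asserts it without further argument.
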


\begin{proof}
The proof of the first statement is correct in \cite{RW15} so we have to show that all $\sSym_n$-invariant, normal subgroups $N \unlhd \ZZ_2^{*n}$ arise as $N = F_n(\cC)$ for some group-theoretical category of partitions $\cC$. \\
Raum and Weber put 
\[\cC := \{p \in P \mid p \text{ is a rotation of } \ke(\textbf{i}) \text{ for some } k \in \NN, a_{\textbf{i}} \in N\} \]
which is not a category of partitions as it is not closed under taking tensor products. Consider $\ke(\textbf{i}),\ke(\textbf{i'}) \in \cC(k,0)$. Raum and Weber assumed that $\{i_1, \dotsc, i_k\} \cap \{i'_1, \dotsc, i'_{k'}\} = \emptyset$ to conclude $\ke(\textbf{i})\ot \ke(\textbf{i'}) = \ke(\textbf{i}\textbf{i'})$ which is not always possible as $n\in \NN \cup \{\infty\}$ is fixed. Thus the partition $\ke(\textbf{i})\ot \ke(\textbf{i'})$ could have more than $n$ blocks and hence it would not be in $\cC$. \\
We can fix the proof as follows. 
The category of partitions we are looking for is
\[\cC = \{p \in P \mid p \text{ is a rotation of } \ke(\textbf{i}) \text{ for some } k \in \NN, a_{\textbf{i}} \in N_{\infty}\} ,\]
where $N_\infty$ is the closure of $N$ in $\ZZ_2^{*\infty}$, see Definition \ref{def::Ninfty}.
The proof that $\cC$ is a group-theoretical category of partitions works exactly as in the proof of Theorem 4.4. in \cite{RW15}. Moreover, the proof of Raum and Weber also shows that $F_\infty(\cC) = N_{\infty}$ and hence we have $F_n(\cC) = F_\infty(\cC) \cap \ZZ_2^{*n} = N_{\infty} \cap \ZZ_2^{*n}$.\\
It remains to show that $N_{\infty} \cap \ZZ_2^{*n}=N$. We have $N\subseteq N_\infty \cap \ZZ^{*n}$, so let $a_{\textbf{i}} \in N_\infty \cap \ZZ^{*n}$. By definition of $N_{\infty}$ there exist $k\in \NN$, $a_{\textbf{g}_1}, \ldots ,a_{\textbf{g}_k} \in S_{\infty} (N)$ and invertible elements $a_{\textbf{f}_1}, \ldots ,a_{\textbf{f}_k} \in \ZZ^{*\infty}$ with 
$$ a_{\textbf{i}} = \prod_{j=1}^k a_{\textbf{f}_j} a_{\textbf{g}_j} a_{\textbf{f}_j}^{-1} .$$
Since the element $a_{\textbf{i}}$ is by assumption a word in the letters $a_1,\ldots ,a_n$, it is invariant under the map $\phi \in \sSym_\infty$ with $\phi (x)=x$ for all $x\in \underline{n}$ and $\phi (x) = 1$ for all $x\in \NN \backslash \underline{n}$. Thus 
$$ a_{\textbf{i}} = \phi (a_{\textbf{i}}) = \phi \left( \prod_{j=1}^k a_{\textbf{f}_j} a_{\textbf{g}_j} a_{\textbf{f}_j}^{-1} \right) = \prod_{j=1}^k a_{\phi (\textbf{f}_j)} a_{\phi (\textbf{g}_j)} a_{\phi (\textbf{f}_j)}^{-1} .$$
Then $a_{\phi (\textbf{f}_j)} \in \ZZ^{*n}$ for all $j\in \underline{k}$ and since $N$ is $\sSym_n$-invariant, we also have $a_{\phi (\textbf{g}_j)} \in N$ for all $j\in \underline{k}$. As $N$ is normal, it follows that  $a_{\textbf{i}}\in N$.
\end{proof}

\begin{remark}
The set Raum and Weber used was still a generating set for the category of partitions we are looking for. Let $n \in \NN \cup \{\infty\}$ and let $N \unlhd \ZZ_2^{*n}$ be an $\sSym_n$-invariant, normal subgroup. We put
\begin{align*}
&\cC := \{p \in P \mid p \text{ is a rotation of } \ke(\textbf{i}) \text{ for some } k \in \NN, a_{\textbf{i}} \in N_{\infty}\} \text{ and} \\
&\cC' := \langle \ke(\textbf{i}) \mid a_{\textbf{i}} \in N \rangle .
\end{align*} 
Then we have $\cC=\cC'$.
\end{remark}

\begin{proof}
By the previous theorem $\cC$ is a category of partitions with $\{\ke(\textbf{i}) \mid a_{\textbf{i}} \in N\} \subseteq \cC$ and thus we have $\cC' \subseteq \cC$. By definition of $\cC'$ we have $N\subseteq F_n(\cC') \subseteq F_{\infty}(\cC')$. It also follows from the previous theorem that $F_{\infty}(\cC')$ is an $\sSym_{\infty}$-invariant, normal subgroup and hence we have $N_{\infty} \subseteq F_{\infty} (\cC')$ by the definition of $N_{\infty}$. It follows that $F_{\infty}(\cC)=N_{\infty} \subseteq F_{\infty}(\cC')$ and hence $\cC \subseteq \cC'$.
\end{proof}

\end{document}